\documentclass[final, 12pt,reqno]{amsart}
\usepackage{amssymb,amsmath,graphicx,amsfonts,euscript}
\usepackage{color}
\usepackage[pagewise]{lineno}
\usepackage{showkeys}
\usepackage{fancyhdr}
\usepackage[margin=1in]{geometry}
\setlength{\textheight}{8.9in} \setlength{\textwidth}{5.9in}
\setlength{\oddsidemargin}{0.2in} \setlength{\evensidemargin}{0.2in}
\setlength{\parindent}{0.2in}
\setlength{\topmargin}{0.2in} \setcounter{section}{0}
\setcounter{figure}{0} \setcounter{equation}{0}
\newtheorem{theorem}{\bf Theorem}[section]

\newtheorem{proposition}[theorem]{\bf Proposition}
\newtheorem{define}[theorem]{\bf Definition}
\newtheorem{remark}{\bf Remark}

\newtheorem{lemma}[theorem]{\bf Lemma}

\newcommand{\beq}{\begin{equation}}
\newcommand{\eeq}{\end{equation}}
\newcommand{\ben}{\begin{eqnarray}}
\newcommand{\een}{\end{eqnarray}}
\newcommand{\beno}{\begin{eqnarray*}}
\newcommand{\eeno}{\end{eqnarray*}}

\numberwithin{equation}{section}
\subjclass[2010]{35A01, 35B45, 35R11, 35Q92.}
\keywords{Generalized Keller-Segel system, Mixing, Fractional dissipation, Suppression of blow up.}
\geometry{left=3cm}
\geometry{top=3cm}
\geometry{bottom=2cm}
\makeatother
\title[generalized Keller-Segel system]{Suppression of blow up by mixing in generalized  Keller-Segel system with fractional dissipation }

\author[Binbin Shi and  Weike Wang ]{\sc Binbin Shi$^{1}$, Weike Wang$^{2}$}


\address{$^1$ School of Mathematical Sciences , Shanghai Jiao Tong University, Shanghai, 200240,  P.R.China.}
\email{binbinshi@sjtu.edu.cn}

\address{$^2$ School of Mathematical Sciences  and Institute of Natural Science, Shanghai Jiao Tong University, Shanghai, 200240,  P.R.China.}
\email{wkwang@sjtu.edu.cn}




\begin{document}
\bibliographystyle{abbrv}
\maketitle
\begin{center}
\sc Abstract
\end{center}
In this paper, we consider the Cauchy problem for a generalized parabolic-elliptic Keller-Segel equation with fractional dissipation and the additional mixing effect of advection by an incompressible flow. Under suitable mixing condition on the advection, we study well-posedness of solution with large initial data. We establish the global $L^\infty$ estimate of the solution through nonlinear maximum principle, and obtain the global classical solution.

\vskip .3in
\section{Introduction}

We consider the following generalized parabolic-elliptic Keller-Segel system on torus $\mathbb{T}^d$ with fractional dissipation and the additional mixing effect of advection by an incompressible flow
\begin{equation}\label{eq:1.1}
\begin{cases}
\partial_t\rho+u\cdot \nabla \rho+(-\Delta)^{\frac{\alpha}{2}}\rho+\nabla\cdot(\rho B(\rho))=0,\qquad  &t>0, x\in \mathbb{T}^d,\\
\rho(0,x)=\rho_0(x),&x\in\mathbb{T}^d.
\end{cases}
\end{equation}
Here $\rho(t,x)$ is a real value function of $t$ and $x$, $0<\alpha<2$, the $\mathbb{T}^d$ is the periodic box with dimension $d\geq2$. The quantity $\rho$ denotes the density of microorganisms, $u$ is a divergence free vector field which is an ambient flow. The nonlocal operator $(-\Delta)^{\frac{\alpha}{2}}$ is known as the  Laplacian of the order  $\frac{\alpha}{2}$, which is given by
$$
(-\Delta)^{\frac{\alpha}{2}}\phi(x)=\mathcal{F}^{-1}(|\xi|^\alpha \hat{\phi}(\xi))(x),
$$
where
$$
\hat{\phi}(\xi)=\mathcal{F}(\phi(x))=\int_{\mathbb{R}^d}\phi(x)e^{-ix\cdot\xi}dx,
$$
and $\mathcal{F}$ and $\mathcal{F}^{-1}$ are Fourier transformation and its inverse transformation. The linear vector operator $B$ is called attractive kernel, which could be formally represented as
\begin{equation}\label{eq:1.2}
B(\rho)=\nabla((-\Delta)^{-\frac{d+2-\beta}{2}}\rho),
\end{equation}
and it is explicitly expressed by a convolution of a singular kernel $K$
\begin{equation}\label{eq:1.3}
B(\rho)=\nabla K\ast \rho, \quad \nabla K\sim -\frac{x}{|x|^\beta}\quad  2\leq \beta<d+1.
\end{equation}

\vskip .1in
In the absence of the advection, the equation (\ref{eq:1.1}) is the generalized Keller-Segel system with fractional dissipation
\begin{equation}\label{eq:1.4}
\partial_t\rho+(-\Delta)^{\frac{\alpha}{2}}\rho+\nabla\cdot(\rho B(\rho))=0,\quad \rho(0,x)=\rho_0(x),\quad x\in \Omega,
\end{equation}
where $\Omega$ is $\mathbb{R}^d$ or $\mathbb{T}^d$, and the equation (\ref{eq:1.4}) describes many physical processes involving diffusion and interaction of particles (see \cite{ Biler.1999, Brenner.1999}). It is well known that the solution of the equation (\ref{eq:1.4}) is global existence in one dimension, and for multi-dimensions, the solution can blow up in finite time. Specifically, when $\alpha=2, \beta=d$, the equation (\ref{eq:1.4}) is called classical attractive type Keller-Segel system. In one space dimension, the equation admits large data global in time smooth solution (see \cite{Hillen.2004,Koichi.2001} ). In high dimensions, there are global in time smooth solution when the initial data is small, while the solutions  may exhibit finite-time blowup for large data
(see \cite{Blanchet.2006, Corrias.2004,Kiselev.2016,Nagai.1995,Senba.2002}). The regime $0<\alpha<2, \beta=d$, the equation (\ref{eq:1.4}) is a classical Keller-Segel system with fractional dissipation, which was studied by many people and it corresponds to the so-called anomalous diffusion. For $d=1$ and $0<\alpha\leq1$, the solution of equation (\ref{eq:1.4}) is global if $\|\rho_0\|_{L^{\frac{1}{\alpha}}}\leq C(\alpha)$, and the solution of equation (\ref{eq:1.4}) is global if $1<\alpha<2$ (see \cite{Nikolaos.2010}). While $d\geq2$, the solution of equation (\ref{eq:1.4}) would  blow up in finite time with large data (see \cite{Biler.2010, Hopf.2018,Li.2010,Li.2018}). In the case of $0<\alpha<2, \beta\in [2,d+1), d\geq2$, the equation (\ref{eq:1.4}) is called a generalized Keller-Segel system with fractional dissipation, the solution of equation (\ref{eq:1.4}) always blow up in finite time when the initial data is large  (see \cite{Biler.2010, Hopf.2018,Li.2018}).

\vskip .1in
Recently, the chemotactic process coupled with other mechanism has been extensively studied and obtained some interesting phenomenon. For example, Burczak, Belinch\'{o}n (see \cite{Burczak.2017}) and Tello, Winkler (see \cite{Tello.2007}) proved that a logistic source could prevent the singularity of the solution. A more interesting  problem is the chemotactic process taking place in fluid, the agent involved in chemotactic is also advected by the ambient flow. The problem of chemotactic in fluid flow has been studied (see \cite{Francesco.2010, Duan.2010, Liu.2011, Lorz.2010}). For the possible effects resulting from the interaction of chemotactic and fluid transport process, many people get interested in the suppression of blow up in the  chemotactic model by fluid effect. Kiselev, Xu (see \cite{Kiselev.2016}) and Hopf, Rodrigo (see \cite{Hopf.2018}) obtained the global solution of the equation (\ref{eq:1.1}) by the mixing effect of fluid. Bedrossian and He (see \cite{Bedrossian.2017}) showed that the shear flows was dissipation enhancement for the Keller-Segel system. In this paper, we continue to study the mixing effect of fluid to chemotactic model.

\vskip .1in
Mixing was studied by Constantin, Kiselev,  Ryzhik, and Zlato\v{s}
(see \cite{Constantin.2008}) as the fluid effect. In order to describe the mixing effect, Constantin et al. considered the following heat equation with advection
\begin{equation}\label{eq:1.5}
\phi_t^A(t,x)+Au\cdot\nabla \phi^A(t,x)-\Delta\phi^A(t,x)=0, \quad  \phi^A(0,x)=\phi_0(x),
\end{equation}
and they defined the relaxation enhancing flow. Namely, for every $\tau>0, \delta>0$, there exists a positive constant $A_0=A(\tau, \delta)$, such that for any $A\geq A_0$ and any $\phi_0(x)\in L^2$
$$
\|\phi^A(\tau,\cdot)\|_{L^2}\leq \delta \|\phi_0\|_{L^2},
$$
then incompressible flows $u$ is called relaxation enhancing flow. Here $\phi^A(t,x)$ is the solution of (\ref{eq:1.5}), $\overline{\phi}$ is the average of $\phi_0$ and $\overline{\phi}=0$. And they provided a necessary and sufficient condition for the relaxation enhancing flow. Notice that if there is no dissipation term in (\ref{eq:1.5}), the $L^2$ norm conservated, namely $\|\phi^A\|_{L^2}=\|\phi_0\|_{L^2}$. The result in \cite{Constantin.2008} means that combination of mixing and dissipation produces a significantly stronger dissipative effect than dissipation alone. Specifically,  for a fixed time $\tau$, the $\|\phi^A(\tau,\cdot)\|_{\dot{H}^1}$ is large enough in some sense when $A$ is large enough. So the mixing term is enhancing for the dissipation, it can be useful in the model describing a physical situation which involves fast unitary dynamics with dissipation (see \cite{Kiselev.2012, Kiselev.2016}). We will briefly introduce relaxation enhancing flow and weakly mixing (see, Definition \ref{def:2.4}) in section 2.3, the reader can refer to \cite{Constantin.2008} for more details.

\vskip .1in
For the equation (\ref{eq:1.1}), mixing effect is included in chemotactic model, and our main concern is whether mixing can suppress the blowup phenomenon in finite time. When $\alpha=2, \beta=d, d=2,3$, Kiselev and Xu (see \cite{Kiselev.2016}) established the $L^2$ estimate of the solution in the case of weakly mixing, and obtained the global smooth solution by $L^2$-criterion. Namely, the blowup solution of Keller-Segel system be prevented. For $0<\alpha<2, \beta\in [2,d+1), d\geq2$, Hopf and Rodrigo proved that there exists $L^2$ estimate of the solution by relaxation enhancing flow, and also got the global smooth solution if $\alpha>\max\{\beta-\frac{d}{2},1\}$(see \cite{Hopf.2018}, Theorem 4.5). In particularly, for classical Keller-Segel system with fractional dissipation , when $\alpha>\frac{d}{2},d=2,3$, the solution of (\ref{eq:1.1}) was global smooth. For the smaller lower bounds on $\alpha$ and higher dimension $d$, we require the $L^p(p>2)$ estimate of the solution instead of the $L^2$ estimate. Hopf and Rodrigo only considered the case $\alpha=2, \beta=d,$  with $d\geq4$ (see \cite{Hopf.2018}, Theorem 4.6), they got the $L^p(2<p<\infty)$ estimate of the solution by relaxation enhancing flow, and obtained the global smooth solution by $L^p$-criterion.

\vskip .1in
At the same time, Hopf and Rodrigo  thought that the $L^p(p>2)$ estimate of the solution for equation (\ref{eq:1.1})  is hard to achieved in the case of $0<\alpha<2,\beta\in [2,d+1), d\geq2$. The main difficulty from the inequality
\begin{equation}\label{eq:1.8}
\|(-\Delta)^{\frac{1}{2}-\frac{\alpha}{4}}f\|_{L^{p_1}}\leq C\|(-\Delta)^{\frac{\alpha}{4}}(|f|^{\frac{p}{2}})\|^{\frac{2}{p}}_{L^{2}},
\end{equation}
for some $p_1>2$ (see \cite{Hopf.2018}). Certainly the inequality (\ref{eq:1.8}) cannot hold unless $\alpha>\left(\frac{1}{p}+\frac{1}{2}\right)^{-1}>1$, and notice that the $\alpha$ tends to $2$ when $p$ is large enough. So it is  not obvious to extend Hopf and Rodrigo's approach to  the generalized Keller-Segel system with fractional dissipation of any strength $\alpha$ and in any dimension $d\geq2$.

\vskip .1in
In this paper, we consider the generalized Keller-Segel system with fractional dissipation and weakly mixing in the case of any $0<\alpha<2, \beta\in [2,d], d\geq2$. And for convenience, we consider the $\mathbb{T}^d=[-\frac{1}{2},\frac{1}{2})^d$. In order to get $L^p$ estimate of the solution to equation (\ref{eq:1.1}), we introduce a nonlinear maximum principle on $\mathbb{T}^d$ (see Appendix). Due to mixing effect, we obtain the $L^p(p=\infty)$ estimate of the solution through nonlinear maximum principle, then we get the global classical solution by $L^\infty$-criterion. We believe that the range of $\alpha$ and $d$ are more general in our results, comparing with other results in \cite{Hopf.2018, Kiselev.2016}. Due to technical difficulties, we don't consider the case of $d<\beta<d+1$.

\vskip .1in
Let us now state our main result.
\begin{theorem}\label{thm:1.1}
Let $0<\alpha<2,\beta\in [2,d], d\geq2$, for any initial data $\rho_0\geq0, \rho_0\in H^3(\mathbb{T}^d)\cap L^{\infty}(\mathbb{T}^d)$, there exists a smooth incompressible flow $u$, such that the unique solution $\rho(t,x)$ of equation (\ref{eq:1.1}) is global in time, and we have
$$
\rho(t,x)\in C(\mathbb{R}^{+}; H^3(\mathbb{T}^d)).
$$
\end{theorem}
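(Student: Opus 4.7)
The plan is to run a two-step scheme: first use the advection to enforce $L^2$ control, then upgrade to an $L^\infty$ bound via the nonlinear maximum principle on $\mathbb{T}^d$ from the Appendix. I would begin by setting up local-in-time well-posedness in $H^3$ together with the $L^\infty$-continuation criterion: as long as $\|\rho(t)\|_{L^\infty}$ stays finite, the $H^3$ regularity persists. Mass conservation $\|\rho(t)\|_{L^1}=\|\rho_0\|_{L^1}$ is immediate from incompressibility of $u$ and the divergence form of the nonlinearity, so the theorem reduces to a global \emph{a priori} bound on $\|\rho(t)\|_{L^\infty}$.

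For the $L^2$ step I would follow the Kiselev--Xu / Hopf--Rodrigo strategy. Testing (\ref{eq:1.1}) against $\rho$ yields
\begin{equation*}
\tfrac{1}{2}\ddt\|\rho\|_{L^2}^2 + \|(-\Dd)^{\alpha/4}\rho\|_{L^2}^2 \;=\; \tfrac{1}{2}\int_{\mathbb{T}^d} \rho^2\,(-\Dd)^{(\beta-d)/2}\rho\,dx,
\end{equation*}
in which the right-hand side is a subcritical perturbation of the fractional dissipation. Picking a weakly mixing incompressible field $u$ of sufficiently large amplitude $A=A(\rho_0)$ activates the enhanced-dissipation mechanism of Constantin--Kiselev--Ryzhik--Zlato\v s, producing a threshold $\varepsilon=\varepsilon(\rho_0)$ and a short time $\tau=\tau(\rho_0)$ with $\|\rho(t)\|_{L^2}\le \varepsilon$ for every $t\ge \tau$, while the local theory keeps $\|\rho(t)\|_{L^\infty}$ controlled on the transient interval $[0,\tau]$.

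The key new ingredient is the pointwise $L^\infty$ estimate at the running maximum. At a spatial maximum $\bar x(t)$ of $\rho(t,\cdot)$ both the transport and the $\nabla\rho\cdot B(\rho)$ contributions vanish, and since $\nabla\cdot B(\rho) = -(-\Dd)^{(\beta-d)/2}\rho$,
\begin{equation*}
\ddt\|\rho(t)\|_{L^\infty} + (-\Dd)^{\alpha/2}\rho(\bar x) \;\le\; \rho(\bar x)\,\bigl|(-\Dd)^{(\beta-d)/2}\rho(\bar x)\bigr|.
\end{equation*}
For $\beta<d$ the operator $(-\Dd)^{(\beta-d)/2}$ is a Riesz potential of positive order and is controlled pointwise by $\|\rho\|_{L^2}$ through Hardy--Littlewood--Sobolev; for the borderline case $\beta=d$ the right-hand side collapses to $\rho(\bar x)^2$. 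The torus-adapted nonlinear maximum principle of the Appendix furnishes a bound of the form
\begin{equation*}
(-\Dd)^{\alpha/2}\rho(\bar x) \;\ge\; c\,\frac{\rho(\bar x)^{1+2\alpha/d}}{\|\rho\|_{L^2}^{\,2\alpha/d}},
\end{equation*}
so once $\varepsilon$ is small enough the dissipation dominates the attraction for all $\rho(\bar x)\lesssim \varepsilon^{-2\alpha/(d-2\alpha)}$, a threshold that can be pushed above the transient peak $\|\rho(\tau)\|_{L^\infty}$ by enlarging $A$. This forces $\|\rho(t)\|_{L^\infty}$ to stay uniformly bounded for all $t\ge 0$, after which the continuation criterion delivers the global $H^3$ solution.

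The main obstacle is the third step in the genuinely subcritical regime $\alpha<d/2$ with $\beta=d$, where the nonlinear maximum principle yields dissipation of order $\rho(\bar x)^{1+2\alpha/d}$, strictly weaker than the quadratic attraction. Everything hinges on choosing the mixing amplitude $A$ \emph{after} the initial $\|\rho_0\|_{L^\infty}$ is revealed so that $\varepsilon$ is small enough to place the crossover above the transient peak, and on patching the short interval $[0,\tau]$ (on which $L^2$ is not yet small) with the long-time regime $t\ge \tau$ without losing the $L^\infty$ bound. This is precisely why the torus-adapted nonlinear maximum principle is indispensable: it allows one to work directly at $p=\infty$ and so to circumvent the obstruction (\ref{eq:1.8}) that prevented \cite{Hopf.2018} from reaching the intermediate range $p>2$.
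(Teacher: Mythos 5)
Your overall architecture (local theory plus $L^\infty$-continuation criterion, mixing to gain $L^2$ control, nonlinear maximum principle to reach $L^\infty$) matches the paper, but your third step has a genuine gap in exactly the regime the theorem is about, namely $\alpha<d/2$. You apply the nonlinear maximum principle with $p=2$, which gives dissipation of order $\widetilde{\rho}^{\,1+2\alpha/d}$ with $1+2\alpha/d<2$, and you propose to beat the quadratic attraction by making $\|\rho\|_{L^2}$ small via mixing, so that the crossover threshold $\sim\varepsilon^{-2\alpha/(d-2\alpha)}$ sits above the transient peak. This cannot work as stated: the mass $\|\rho\|_{L^1}=\|\rho_0\|_{L^1}$ is conserved and $\rho\ge 0$, so on the unit torus $\|\rho\|_{L^2}\ge\|\rho_0\|_{L^1}$ is bounded \emph{below} independently of $A$; mixing only shrinks $\|\rho-\overline{\rho}\|_{L^2}$, not $\|\rho\|_{L^2}$. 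Hence your crossover threshold is capped by a constant depending only on the conserved mass and cannot be "pushed above the transient peak by enlarging $A$" for large data. If you instead run the maximum principle on $\rho-\overline{\rho}$ (whose $L^2$ norm mixing does shrink), you face a circularity you have not closed: keeping $\|\rho-\overline{\rho}\|_{L^2}$ small for \emph{all} $t\ge\tau$ requires an a priori $L^\infty$ bound to control the nonlinear growth of the $L^2$ norm between mixing windows, and that $L^\infty$ bound is precisely what you are trying to prove. A secondary inaccuracy: for $2\le\beta<d$ the pointwise bound $|(-\Delta)^{(\beta-d)/2}\rho(\overline{x})|\lesssim\|\rho\|_{L^2}$ via Hardy--Littlewood--Sobolev requires $\beta<d/2$; the paper instead uses $\|\Delta K\ast\rho\|_{L^\infty}\le\|\Delta K\|_{L^1}\|\rho\|_{L^\infty}$, which again produces a quadratic term $C_0\widetilde{\rho}^2$.

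The paper's resolution of this is the step your proposal is missing. It never asks the $L^2$ norm to be small in the maximum principle; instead it (i) uses the $p=1$ maximum principle together with conservation of mass to get a short-time bound $\|\rho(t)\|_{L^\infty}\le 2C_\infty$ on an interval $[0,\tau_1]$ whose length depends only on $\|\rho_0\|_{L^\infty}$ and not on $A$; (ii) uses the RAGE-theorem iteration to drive $\|\rho-\overline{\rho}\|_{L^2}$ below a threshold $B_1$ within that same interval; (iii) \emph{interpolates} the small $L^2$ bound against the $L^\infty$ bound to obtain $\|\rho(k\tau)-\overline{\rho}\|_{L^p}\le D$ for a finite $p>d/\alpha$; and (iv) applies the nonlinear maximum principle at that $p$, where the exponent $1+p\alpha/d>2$ strictly dominates the quadratic attraction for large $\widetilde{\rho}$ with only a \emph{bounded} (not small) $\|\rho\|_{L^p}$, yielding $\|\rho\|_{L^\infty}\le\max\{M_0,\|\rho_0\|_{L^\infty}\}$ on $[0,k\tau]$ and allowing the whole cycle to restart from $t=k\tau$ with the same constants. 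It is the passage to $p>d/\alpha$, not the smallness of an $L^2$ quantity, that changes the structure of the Riccati-type inequality and closes the bootstrap; without it your argument does not cover large initial data when $\alpha<d/2$.
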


\begin{remark}
The smooth incompressible flow $u$ is weakly mixing (see, Definition \ref{def:2.4}), and the result is still remain true  for the general relaxation enhancing flow (see \cite{Constantin.2008,Hopf.2018}).
\end{remark}


\begin{remark}
The result can seen as a extension of Kiselev et al.(see \cite{Kiselev.2016}) and Hopf et al. (see \cite{Hopf.2018}). I believe that the equation (\ref{eq:1.1}) have same result for sufficiently strong fractional dissipation in the case of $d<\beta<d+1$, but it requires new ideas. We will come back to this topic in the future.
\end{remark}

\vskip .1in
In the following, we briefly state our main ideas of the proof. Firstly, we establish the $L^\infty$-criterion of solution to equation (\ref{eq:1.1}). Namely, we can get the higher order energy estimate of the solution if the $L^\infty$ norm of solution is uniform bound, thus  there is a global classical solution for the equation (\ref{eq:1.1}). Next, we obtain the $L^\infty$ estimate of the solution to equation (\ref{eq:1.1}). According to the $L^1$ norm conservation of the solution and nonlinear maximum principle, we can get the local $L^\infty$ estimate of the solution, it derives that the local $L^2$ estimate of the solution is small by mixing effect. Combining with the local $L^2$ and $L^\infty$ estimate of the solution, we deduce that the local $L^p(p>\frac{d}{a})$ estimate of the solution is controlled by its initial data. Using the nonlinear maximum principle again, the local $L^\infty$ norm is estimated by the initial data. Repeating the above process, we extend the local $L^p$ and $L^\infty$ estimate of the sulution to all time. Thus, we get the uniform $L^\infty$ estimate. In the details of the proof, we will discuss $\beta=d$ and $\beta\in [2,d)$ respectively, due to the properties of $\Delta K$.  When $\beta=d$, $\Delta K$ is not integrable, but the attractive kernel is written as  $B(\rho)=\nabla((-\Delta)^{-1}\rho)$. While $2\leq\beta<d$, the attractive kernel can be expressed by $B(\rho)=\nabla K\ast \rho$, and $\Delta K$ is integrable. So, some different techniques are required to deal with two cases.

\vskip .1in
This paper is organized as follows. In Section 2, we introduce the properties of the nonlocal operator and the functional space. We give the local well-posedness and basic properties for the generalized Keller-Segel system with fractional dissipation and weakly mixing. The mixing effect of the solution is also introduced in this section. In Section 3, we establish the $L^\infty$ estimate of the solution to equation (\ref{eq:1.1}) when $\beta=d$ with $d\geq2$, and we give the proof of Theorem \ref{thm:1.1} by $L^\infty$-criterion. In Section 4, we finish the proof of Theorem \ref{thm:1.1} through the similar method in the case of $\beta\in [2,d), d>2$. Because different properties of $\Delta K$, we introduce some different techniques to complete the proof. In Section 5, we  prove a nonlinear maximum principle on periodic box.

\vskip .1in
Throughout the paper, $C$ stand for universal constants that may change from line to line.

\vskip .3in
\section{Preliminaries}
In what follows, we provide some the auxiliary results and notations.
\subsection{Nonlocal operator}
The fractional Laplacian is nonlocal operator and it has the following kernel representation on $\mathbb{T}^d$ (see \cite{Burczak.2017, Calderon.1954})
\begin{equation}\label{eq:2.1}
(-\Delta)^{\frac{\alpha}{2}}f(x)=C_{\alpha,d}\sum_{k\in \mathbb{Z}^d}P.V.\int_{\mathbb{T}^d}\frac{f(x)-f(y)}{|x-y+k|^{d+\alpha}}dy,
\end{equation}
where
$$
C_{\alpha,d}=\frac{2^\alpha\Gamma(\frac{d+\alpha}{2})}{\pi^{\frac{d}{2}}|\Gamma(-\frac{\alpha}{2})|},
$$
and
$$
\Gamma(z)=\int_0^\infty t^{z-1} e^{-t}dt.
$$
Recall that we denote by
$$
0\leq\lambda_1\leq\lambda_2\leq\cdots \leq\lambda_n\leq \cdots
$$
is the eigenvalue of the operator $-\Delta$ on $\mathbb{T}^d$, then the eigenvalue of operator $(-\Delta)^{\frac{\alpha}{2}}$ is as follows (see \cite{Cusimano.2018})
\begin{equation}\label{eq:2.2}
0\leq\lambda_1^{\frac{\alpha}{2}}\leq\lambda_2^{\frac{\alpha}{2}}\leq\cdots\leq\lambda_n^{\frac{\alpha}{2}}\leq \cdots.
\end{equation}

The following results are two important lemmas.

\begin{lemma}[Positivity Lemma, see \cite{Cordoba.2004, Ju.2005}]\label{lem:2.1}
 Suppose $0\leq \alpha \leq2, \Omega=\mathbb{R}^d, \mathbb{T}^d$ and $f, (-\Delta)^{\frac{\alpha}{2}}f\in L^p$, where $p\geq2$. Then
$$
\frac{2}{p}\int_{\Omega}((-\Delta)^{\frac{\alpha}{4}}|f|^{\frac{p}{2}})^2dx\leq \int_{\Omega}|f|^{p-2}f(-\Delta)^{\frac{\alpha}{2}}f dx.
$$
\end{lemma}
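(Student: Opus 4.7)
The plan is to reduce the lemma to a pointwise algebraic inequality via the singular integral representation (\ref{eq:2.1}) and symmetrization of the resulting double integral. Both sides of the claimed inequality will be rewritten, up to the positive constant $C_{\alpha,d}$ and the sum over $k\in\mathbb{Z}^d$, as symmetric double integrals against the kernel $|x-y+k|^{-d-\alpha}$.

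First I would apply the identity
\begin{equation*}
\int_{\mathbb{T}^d} g\,(-\Delta)^{\frac{\alpha}{2}}g\,dx = \frac{C_{\alpha,d}}{2}\sum_{k\in\mathbb{Z}^d}\iint_{\mathbb{T}^d\times\mathbb{T}^d}\frac{(g(x)-g(y))^2}{|x-y+k|^{d+\alpha}}\,dx\,dy,
\end{equation*}
which follows from (\ref{eq:2.1}) by a direct $x\leftrightarrow y$ symmetrization, with the choice $g=|f|^{p/2}$; by self-adjointness of $(-\Delta)^{\alpha/4}$ (equivalently, Parseval), the left-hand side equals the right-hand side of the lemma. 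Applying (\ref{eq:2.1}) to $\int |f|^{p-2}f(-\Delta)^{\alpha/2}f\,dx$ and symmetrizing similarly produces
\begin{equation*}
\frac{C_{\alpha,d}}{2}\sum_{k\in\mathbb{Z}^d}\iint\frac{(|f(x)|^{p-2}f(x)-|f(y)|^{p-2}f(y))(f(x)-f(y))}{|x-y+k|^{d+\alpha}}\,dx\,dy.
\end{equation*}
The lemma thus reduces to the pointwise inequality
\begin{equation*}
\tfrac{2}{p}(|a|^{p/2}-|b|^{p/2})^2 \leq (|a|^{p-2}a - |b|^{p-2}b)(a-b),\qquad a,b\in\mathbb{R},\ p\geq 2.
\end{equation*}

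Next I would prove this elementary inequality by cases. In the same-sign case, by symmetry and homogeneity it suffices to take $0\leq b\leq a$; setting $t=b/a\in[0,1]$, the claim becomes $\tfrac{2}{p}(1-t^{p/2})^2\leq(1-t^{p-1})(1-t)$. Writing $1-t^{p/2}=\tfrac{p}{2}\int_t^1 s^{p/2-1}\,ds$ and applying the Cauchy--Schwarz inequality gives $(1-t^{p/2})^2\leq \tfrac{p^2}{4(p-1)}(1-t)(1-t^{p-1})$, and the hypothesis $p\geq 2$ yields $\tfrac{p}{2(p-1)}\leq 1$, from which the inequality follows. For the opposite-sign case (say $a>0>b$), the right-hand side becomes $(a+|b|)(a^{p-1}+|b|^{p-1})$, which a direct expansion shows dominates the same-sign expression $(a-|b|)(a^{p-1}-|b|^{p-1})$, so this case reduces to the one already treated. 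The case $\Omega=\mathbb{R}^d$ is identical, with the sum over $k$ replaced by a single integral.

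The main obstacle is the pointwise inequality: it is sharp at $p=2$ with $ab\geq 0$ (both sides reduce to $(a-b)^2$), so any argument must use $p\geq 2$ in an essential way, and this is precisely where the Cauchy--Schwarz step enters. A secondary technical point is justifying the $x\leftrightarrow y$ symmetrization inside the principal value, which I would handle by first approximating $f$ by smooth functions so that the double integrals converge absolutely away from the diagonal, and then passing to the limit using the hypothesis $f,(-\Delta)^{\alpha/2}f\in L^p$ to control the resulting boundary terms.
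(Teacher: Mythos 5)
Your proof is correct. Note that the paper itself gives no argument for Lemma \ref{lem:2.1} --- it only cites \cite{Cordoba.2004, Ju.2005} --- so there is nothing internal to compare against; your write-up supplies the standard proof that those references contain in essentially the same form. The reduction via the symmetrized double-integral (Gagliardo-type) representation of both quadratic forms is the classical route, and your scalar inequality
$\tfrac{2}{p}\bigl(|a|^{p/2}-|b|^{p/2}\bigr)^2\leq\bigl(|a|^{p-2}a-|b|^{p-2}b\bigr)(a-b)$
is exactly the right pointwise statement; the Cauchy--Schwarz derivation $\bigl(\int_t^1 s^{p/2-1}\,ds\bigr)^2\leq\frac{(1-t^{p-1})(1-t)}{p-1}$ together with $\frac{p}{2(p-1)}\leq1$ for $p\geq2$ is clean and sharp at $p=2$, and the opposite-sign case does reduce to the same-sign case since $(a+|b|)(a^{p-1}+|b|^{p-1})-(a-|b|)(a^{p-1}-|b|^{p-1})=2\bigl(a|b|^{p-1}+|b|a^{p-1}\bigr)\geq0$ while the left-hand side depends only on $|a|,|b|$. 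The one point worth tightening is the justification of the symmetrized representation for $g=|f|^{p/2}$: this function is generally not smooth even for smooth $f$, so rather than ``approximating $f$ by smooth functions and controlling boundary terms,'' it is cleaner to invoke the identity $\|(-\Delta)^{\alpha/4}g\|_{L^2}^2=\frac{C_{\alpha,d}}{2}\sum_k\iint\frac{(g(x)-g(y))^2}{|x-y+k|^{d+\alpha}}\,dx\,dy$ as the Gagliardo characterization of the $\dot H^{\alpha/2}$ seminorm, valid for all $g\in L^2$ with both sides possibly infinite, and then conclude by the pointwise domination of the integrands (so finiteness of the right-hand side of the lemma forces finiteness of the left). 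With that adjustment the argument is complete.
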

\begin{lemma}\label{lem:2.2}
Suppose $0< \alpha<2, \Omega=\mathbb{R}^d, \mathbb{T}^d$ and $f\in \mathcal{S}(\Omega)$. Then
$$
\int_{\Omega}(-\Delta)^{\frac{\alpha}{2}}f(x)dx=0.
$$
\end{lemma}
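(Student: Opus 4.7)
The plan is to reduce the statement to a direct computation on the Fourier side, handling the two cases $\Omega=\mathbb{R}^d$ and $\Omega=\mathbb{T}^d$ separately, since the author's own definition of $(-\Delta)^{\alpha/2}$ is given via the Fourier multiplier $|\xi|^\alpha$.

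For $\Omega=\mathbb{R}^d$, I would first observe that for Schwartz $f$, the function $(-\Delta)^{\alpha/2}f$ belongs to $L^1(\mathbb{R}^d)$: this follows from the decay of $\hat f$ together with the fact that $|\xi|^\alpha\hat f(\xi)$ is smooth away from the origin and decays fast enough at infinity, so that its inverse Fourier transform has at least $L^1$ integrability (alternatively, a direct estimate on the singular integral kernel gives the required decay at infinity). Once integrability is secured, the integral $\int_{\mathbb{R}^d}(-\Delta)^{\alpha/2}f\,dx$ is precisely the Fourier transform of $(-\Delta)^{\alpha/2}f$ evaluated at $\xi=0$, which equals $|\xi|^\alpha\hat f(\xi)\big|_{\xi=0}=0$ because $\alpha>0$ and $\hat f(0)$ is finite.

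For $\Omega=\mathbb{T}^d$, the cleanest route is to expand $f$ in a Fourier series $f(x)=\sum_{k\in\mathbb{Z}^d}c_k e^{2\pi i k\cdot x}$. Since $f\in\mathcal{S}(\mathbb{T}^d)$ (i.e. smooth) the coefficients $c_k$ decay faster than any polynomial, so the series $(-\Delta)^{\alpha/2}f(x)=\sum_{k\in\mathbb{Z}^d}|2\pi k|^\alpha c_k e^{2\pi i k\cdot x}$ converges absolutely and uniformly. Integrating term by term, every mode with $k\neq 0$ has zero mean on $\mathbb{T}^d$, and the $k=0$ mode contributes $|0|^\alpha c_0=0$, again because $\alpha>0$.

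As a backup, I would also note that one can bypass Fourier analysis entirely by using the kernel representation (\ref{eq:2.1}). Writing
\[
\int_{\mathbb{T}^d}(-\Delta)^{\alpha/2}f(x)\,dx=C_{\alpha,d}\sum_{k\in\mathbb{Z}^d}\mathrm{P.V.}\int_{\mathbb{T}^d}\!\int_{\mathbb{T}^d}\frac{f(x)-f(y)}{|x-y+k|^{d+\alpha}}\,dy\,dx,
\]
one sees that the integrand is antisymmetric under the simultaneous change of variables $(x,y,k)\mapsto(y,x,-k)$, which leaves the kernel invariant (since $|x-y+k|=|y-x-k|$) and flips the sign of the numerator; Fubini and re-indexing the sum then force the quantity to equal its own negative. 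The only delicate point in this approach is justifying the exchange of the sum, the two integrals and the principal value, which is where the main obstacle in this alternative argument would lie; for the Fourier-based proof, by contrast, the only thing to check is the absolute convergence of the series (on $\mathbb{T}^d$) or the $L^1$ integrability of $(-\Delta)^{\alpha/2}f$ (on $\mathbb{R}^d$), both of which are straightforward for Schwartz data.
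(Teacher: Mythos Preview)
Your primary argument via Fourier analysis is correct and is \emph{not} the route taken in the paper. The paper treats only the torus case and works directly with the kernel representation~(\ref{eq:2.1}): it splits the numerator $f(x)-f(y)$ into two separate double integrals, uses periodicity of $f$ to replace $f(x)$ by $f(x+k)$ and $f(y)$ by $f(y-k)$, and then sums over $k\in\mathbb{Z}^d$ to unfold each torus integral into an integral over $\mathbb{R}^d$; the two resulting expressions are then manifestly equal. This is essentially a concrete realisation of the antisymmetry observation you sketch as your backup, so your ``alternative'' approach is in fact the paper's approach.

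Your Fourier-series argument on $\mathbb{T}^d$ is cleaner and more robust: it uses only absolute convergence of the series for smooth $f$, whereas the paper's computation silently assumes one may interchange the principal value, the double integral, and the sum over $k$ --- exactly the point you flagged as delicate. On $\mathbb{R}^d$, the only nontrivial step in your argument is the $L^1$ integrability of $(-\Delta)^{\alpha/2}f$; your justification is adequate (the multiplier $|\xi|^\alpha\hat f(\xi)$ is continuous with rapid decay, and the singular integral representation gives $|x|^{-d-\alpha}$ decay of the output at infinity).
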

\begin{proof}
We only prove $\Omega=\mathbb{T}^d$, according to (\ref{eq:2.1}), one has
$$
\begin{aligned}
&\int_{\mathbb{T}^d}(-\Delta)^{\frac{\alpha}{2}}f(x)dx\\
&=C_{\alpha,d}\sum_{k\in \mathbb{Z}^d}P.V.\int_{\mathbb{T}^d}\int_{\mathbb{T}^d}\frac{f(x)-f(y)}{|x-y+k|^{d+\alpha}}dxdy\\
&=C_{\alpha,d}\sum_{k\in \mathbb{Z}^d}P.V.\left(\int_{\mathbb{T}^d}\int_{\mathbb{T}^d}\frac{f(x)}{|x-y+k|^{d+\alpha}}dxdy
-\int_{\mathbb{T}^d}\int_{\mathbb{T}^d}\frac{f(y)}{|x-y+k|^{d+\alpha}}dxdy \right)\\
&=C_{\alpha,d}\sum_{k\in \mathbb{Z}^d}P.V.\left(\int_{\mathbb{T}^d}\int_{\mathbb{T}^d}\frac{f(x+k)}{|x-y+k|^{d+\alpha}}dxdy
-\int_{\mathbb{T}^d}\int_{\mathbb{T}^d}\frac{f(y-k)}{|x-y+k|^{d+\alpha}}dxdy \right).
\end{aligned}
$$
Due to
$$
\sum_{k\in \mathbb{Z}^d}\int_{\mathbb{T}^d}\int_{\mathbb{T}^d}\frac{f(x+k)}{|x-y+k|^{d+\alpha}}dxdy=
\int_{\mathbb{T}^d}\int_{\mathbb{R}^d}\frac{f(x)}{|x-y|^{d+\alpha}}dxdy,
$$
and
$$
\sum_{k\in \mathbb{Z}^d}\int_{\mathbb{T}^d}\int_{\mathbb{T}^d}\frac{f(y-k)}{|x-y+k|^{d+\alpha}}dxdy
=\int_{\mathbb{T}^d}\int_{\mathbb{R}^d}\frac{f(y)}{|x-y|^{d+\alpha}}dydx,
$$
then
$$
\int_{\mathbb{T}^d}(-\Delta)^{\frac{\alpha}{2}}f(x)dx=0.
$$
We complete the proof.
\end{proof}

\vskip .1in
\subsection{Functional spaces and inequalities} We write $L^p(\mathbb{T}^d)$ for the usual  Lebesgue space
$$
L^p(\mathbb{T}^d)=\left\{f\ measurable\ s.t. \int_{\mathbb{T}^d}|f(x)|^pdx<\infty \right\},
$$
the norm for the $L^p$ space is denoted as $\|\cdot\|_{L^p}$, it means
$$
\|f\|_{L^p}=\left( \int_{\mathbb{T}^d}|f|^pdx\right)^{\frac{1}{p}},
$$
with natural adjustment when $p=\infty$. The homogeneous Sobolev norm  $\|\cdot\|_{\dot{H}^{s}}$,
$$
\|f\|^2_{\dot{H}^{s}}=\|(-\Delta)^{\frac{s}{2}}f\|^2_{L^2}=\sum_{k\in \mathbb{Z}^d\backslash \{0\}}|k|^{2s}|\hat{f}(k)|^2,
$$
and the non-homogeneous Sobolev norm  $\|\cdot\|_{H^{s}}$,
$$
\|f\|^2_{H^{s}}=\|f\|^2_{L^2}+\|f\|^2_{\dot{H}^{s}}.
$$

For some standard inequalities, we can refer to \cite{Evans.2010,Friedman.1969}. The following inequality is a Sobolev embedding  for fractional derivation (see \cite{Tadahiro.2013}).
\begin{lemma}[Homogeneous Sobolev embedding]\label{lem:2.3}
Suppose $0< \frac{\sigma}{d}<\frac{1}{p}<1$ and define $q\in (p,\infty)$ via
$$
\frac{\sigma}{d}=\frac{1}{p}-\frac{1}{q}.
$$
Then for all $f\in C^{\infty}(\mathbb{T}^d)$ with zero mean
$$
\|f\|_{L^q}\leq C\|(-\Delta)^{\frac{\sigma}{2}}f\|_{L^p}.
$$
\end{lemma}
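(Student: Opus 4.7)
The plan is to invert the fractional Laplacian on the zero-mean functions and then reduce the resulting Riesz-potential bound to the Hardy--Littlewood--Sobolev (HLS) inequality. Set $g=(-\Delta)^{\sigma/2}f$. Because $f$ has zero mean and $\sigma>0$, Lemma \ref{lem:2.2} (or the Fourier characterization, noting that the zero Fourier mode is annihilated) gives that $g$ also has zero mean. On the space of zero-mean smooth functions, $(-\Delta)^{\sigma/2}$ is invertible with inverse $(-\Delta)^{-\sigma/2}$, so I can write $f=(-\Delta)^{-\sigma/2}g$. The target inequality then becomes the boundedness $(-\Delta)^{-\sigma/2}\colon L^p_0(\mathbb{T}^d)\to L^q(\mathbb{T}^d)$ with the given scaling relation $\tfrac{1}{p}-\tfrac{1}{q}=\tfrac{\sigma}{d}$.

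First I would realize $(-\Delta)^{-\sigma/2}$ as convolution with a periodic Riesz-type kernel $G_\sigma$ on $\mathbb{T}^d$. This can be done either via the Fourier series $G_\sigma(x)=\sum_{k\in\mathbb{Z}^d\setminus\{0\}}|k|^{-\sigma}e^{2\pi i k\cdot x}$ or, more usefully, via the subordination formula
\begin{equation*}
(-\Delta)^{-\sigma/2}g(x)=\frac{1}{\Gamma(\sigma/2)}\int_0^\infty t^{\sigma/2-1}\bigl(e^{t\Delta}g\bigr)(x)\,dt,
\end{equation*}
applied to the mean-zero part, which writes $G_\sigma$ in terms of the heat kernel on $\mathbb{T}^d$. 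Either route shows that $G_\sigma$ coincides near the origin with the Euclidean Riesz kernel $c_{\sigma,d}|x|^{\sigma-d}$ up to a smooth correction, and is smooth (hence bounded) away from $0$. Concretely I would split $G_\sigma=K_1+K_2$ with $K_1$ supported in a small ball around the origin and pointwise dominated by $C|x|^{\sigma-d}$, and $K_2\in L^\infty(\mathbb{T}^d)\subset L^1(\mathbb{T}^d)$.

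Next I estimate each piece. For $K_2\ast g$, Young's inequality gives $\|K_2\ast g\|_{L^q}\le \|K_2\|_{L^r}\|g\|_{L^p}$ with any appropriate $r$; since $K_2\in L^\infty\cap L^1$ and $\mathbb{T}^d$ has finite measure, this piece is immediate. For $K_1\ast g$, I would reduce to the Euclidean HLS inequality by periodizing: write $g$ as a $\mathbb{Z}^d$-periodic function and view the integration against $K_1$ (which is compactly supported near $0$) as integration against the Riesz kernel $|x|^{\sigma-d}$ on a fundamental domain, giving
\begin{equation*}
\|K_1\ast g\|_{L^q(\mathbb{T}^d)}\le C\bigl\||x|^{\sigma-d}\ast \tilde g\bigr\|_{L^q(\mathbb{R}^d)}\le C\|g\|_{L^p(\mathbb{T}^d)},
\end{equation*}
by the classical HLS inequality, whose hypotheses $0<\sigma<d/p$ and $\tfrac{1}{p}-\tfrac{1}{q}=\tfrac{\sigma}{d}$ are exactly the conditions assumed. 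Combining the two bounds yields $\|f\|_{L^q}\le C\|g\|_{L^p}$.

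The main obstacle, and really the only nontrivial step, is showing rigorously that the periodic inverse fractional Laplacian has a kernel whose singular part matches the Euclidean Riesz kernel $|x|^{\sigma-d}$ near $0$, so that HLS transfers cleanly from $\mathbb{R}^d$ to $\mathbb{T}^d$. Using the subordination representation above together with the Gaussian heat-kernel bounds on $\mathbb{T}^d$ (small-time behavior close to the Euclidean heat kernel, exponential decay of the mean-zero part for large time) handles this; the case $0<\sigma<d$ is ensured by the assumption $\sigma/d<1/p<1$. A fully equivalent alternative would be to argue via Littlewood--Paley decomposition on the torus, using Bernstein's inequality $\|\Delta_j h\|_{L^q}\le C 2^{jd(1/p-1/q)}\|\Delta_j h\|_{L^p}$ together with $\|\Delta_j f\|_{L^p}\sim 2^{-j\sigma}\|\Delta_j g\|_{L^p}$, but the HLS route keeps the argument closest to the classical Sobolev embedding and avoids introducing new machinery.
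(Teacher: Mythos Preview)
The paper does not prove this lemma; it simply states it and cites \cite{Tadahiro.2013}. Your approach via the Riesz-potential representation of $(-\Delta)^{-\sigma/2}$ on mean-zero functions, splitting the periodic kernel into a singular piece dominated by $|x|^{\sigma-d}$ and a bounded remainder, and then applying the Euclidean Hardy--Littlewood--Sobolev inequality is a standard and correct route to the result. The hypotheses $1<p$ and $0<\sigma<d$ needed for HLS follow from $0<\sigma/d<1/p<1$, and the subordination formula together with heat-kernel bounds on $\mathbb{T}^d$ indeed yields the required kernel asymptotics. So your argument supplies a complete proof where the paper gives none; the only caveat is that the periodic-kernel analysis and the transfer of HLS to the torus, while routine, deserve a clean reference or a slightly more careful write-up if this were to stand alone.
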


\vskip .1in
\subsection{Mixing  effect}
Given an incompressible vector field $u$ which is Lipschitz in spatial variables, if we defined the trajectories map by (see \cite{Constantin.2008, Kiselev.2016})
$$
\frac{d}{dt} \Phi_t(x)=u(t,\Phi_t(x)), \quad \Phi_0(x)=x.
$$
Then define a unitary operator $U^t$  acting on $L^2(\mathbb{T}^d)$ as follows
$$
U^t f(x)=f(\Phi_t^{-1}(x)).
$$

Next, we give the definition of weakly mixing (see \cite{Kiselev.2016}).
\begin{define}\label{def:2.4}
The incompressible flow $u$ is called weakly mixing if the spectrum of the operator $U\equiv U^t$ is purely continuous.
\end{define}

\begin{remark}
The incompressible flow $u$ is called relaxation enhancing (see \cite{Constantin.2008}) if the operator $U$ has no eigenfunctions in $\dot{H}^1$ other than a constant function. Obviously, the weakly mixing is relaxation enhancing flow.
\end{remark}

\vskip .1in
Let us denote $\omega(t,x)$  is the unique solution of the equation
\begin{equation}\label{eq:2.3}
\partial_t \omega+u\cdot\nabla \omega=0,\quad \omega(0,x)=\rho_0(x),
\end{equation}
there is the following lemma,
\begin{lemma}\label{lem:2.5}
Suppose that $0<\alpha<2$, $u(t,x)$ is a smooth divergence free vector field for each $t\geq0$. Let $\omega(t,x)$ be the solution of (\ref{eq:2.3}). Then for every $t\geq0$, and for every $\rho_0\in \dot{H}^{\frac{\alpha}{2}}$, we have
$$
\|\omega(t,\cdot)\|_{\dot{H}^{\frac{\alpha}{2}}}\leq F(t)\|\rho_0\|_{\dot{H}^{\frac{\alpha}{2}}},
$$
where
$$
F(t)=\exp\left(\int_0^t D(s)ds \right),
$$
and
$$
D(t)\leq C\|(-\Delta)^{\frac{2\alpha+d+2}{4}}u(t,\cdot)\|_{L^2}.
$$
\end{lemma}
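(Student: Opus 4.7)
The plan is to derive a differential inequality for $\|\omega(t,\cdot)\|_{\dot H^{\alpha/2}}^2$ and close it by Gronwall. First I would apply $(-\Delta)^{\alpha/4}$ to the transport equation (\ref{eq:2.3}) and pair with $(-\Delta)^{\alpha/4}\omega$ in $L^2$, producing
\begin{equation*}
\frac{1}{2}\frac{d}{dt}\|\omega\|_{\dot H^{\alpha/2}}^2
= -\int_{\mathbb{T}^d}(-\Delta)^{\alpha/4}\omega\cdot (-\Delta)^{\alpha/4}(u\cdot\nabla\omega)\,dx.
\end{equation*}
I would then split the right side through the commutator identity
$(-\Delta)^{\alpha/4}(u\cdot\nabla\omega)=u\cdot\nabla(-\Delta)^{\alpha/4}\omega+[(-\Delta)^{\alpha/4},u\cdot\nabla]\omega$. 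The transport piece vanishes because $\nabla\cdot u=0$, via the standard $\int u\cdot\nabla|(-\Delta)^{\alpha/4}\omega|^2\,dx=0$ computation.

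The substantive step is bounding the commutator. Using the singular-integral representation (\ref{eq:2.1}) on $\mathbb{T}^d$, one computes componentwise that $[(-\Delta)^{\alpha/4},u_i]\partial_i\omega(x)$ equals a constant multiple of
\begin{equation*}
\sum_{k\in\mathbb{Z}^d}\mathrm{P.V.}\int_{\mathbb{T}^d}\frac{(u_i(x)-u_i(y))\,\partial_i\omega(y)}{|x-y+k|^{d+\alpha/2}}\,dy,
\end{equation*}
where the diagonal singularity is integrable because $\alpha<2$. I would estimate $|u(x)-u(y)|\leq \|\nabla u\|_{L^\infty}|x-y|$ pointwise, which reduces the commutator to $\|\nabla u\|_{L^\infty}$ times a Riesz potential of order $1-\alpha/2$ applied to $|\nabla\omega|$. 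By Hardy--Littlewood--Sobolev this Riesz potential maps $L^p\to L^2$ with $1/p=1/2+(1-\alpha/2)/d$, and by the Sobolev embedding of Lemma \ref{lem:2.3} the norm $\|\nabla\omega\|_{L^p}$ is controlled by $\|\omega\|_{\dot H^{\alpha/2}}$. Combining with Cauchy--Schwarz in the $x$-integral yields
\begin{equation*}
\Bigl|\int (-\Delta)^{\alpha/4}\omega\cdot[(-\Delta)^{\alpha/4},u\cdot\nabla]\omega\,dx\Bigr|
\leq C\|\nabla u\|_{L^\infty}\|\omega\|_{\dot H^{\alpha/2}}^2.
\end{equation*}

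To convert $\|\nabla u\|_{L^\infty}$ into the stated norm of $u$, I would invoke the Sobolev embedding $\dot H^{d/2+1+\varepsilon}(\mathbb{T}^d)\hookrightarrow W^{1,\infty}(\mathbb{T}^d)$, which applies because $\alpha+(d+2)/2>d/2+1$ for every $\alpha>0$, giving
$\|\nabla u\|_{L^\infty}\leq C\|u\|_{\dot H^{\alpha+(d+2)/2}}=C\|(-\Delta)^{(2\alpha+d+2)/4}u\|_{L^2}$.
Setting $D(t):=C\|(-\Delta)^{(2\alpha+d+2)/4}u(t,\cdot)\|_{L^2}$ and applying Gronwall's inequality to $\tfrac{d}{dt}\|\omega\|_{\dot H^{\alpha/2}}^2\leq 2D(t)\|\omega\|_{\dot H^{\alpha/2}}^2$ produces $\|\omega(t,\cdot)\|_{\dot H^{\alpha/2}}\leq F(t)\|\rho_0\|_{\dot H^{\alpha/2}}$.

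The main obstacle is the commutator estimate: the naive Kato--Ponce bound contains a term $\|\Lambda^{\alpha/2}u\|_{L^2}\|\nabla\omega\|_{L^\infty}$ that one cannot control by $\|\omega\|_{\dot H^{\alpha/2}}$, so it is essential to exploit the integrable singularity $|x-y|^{-(d+\alpha/2-1)}$ coming from $\alpha<2$ and pair it with the Lipschitz bound on $u$ instead. The exponent $(2\alpha+d+2)/4$ is precisely what the chain of HLS and Sobolev embeddings requires, with a small margin consumed by the strict inequality $\dot H^{d/2+1+\varepsilon}\hookrightarrow W^{1,\infty}$. Once the commutator is handled, the rest is immediate.
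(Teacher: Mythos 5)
Your overall strategy (energy estimate for $\|\omega\|_{\dot H^{\alpha/2}}^2$, kill the transport term by incompressibility, bound the commutator, Gronwall) is the right shape, and the final Sobolev embedding $\|\nabla u\|_{L^\infty}\lesssim\|(-\Delta)^{(2\alpha+d+2)/4}u\|_{L^2}$ is fine. But the central step of your commutator bound contains a genuine error. After estimating $|u_i(x)-u_i(y)|\le\|\nabla u\|_{L^\infty}|x-y|$ you are left with a Riesz potential of order $1-\alpha/2$ acting on $|\nabla\omega|$, and you then claim $\|\nabla\omega\|_{L^p}\le C\|\omega\|_{\dot H^{\alpha/2}}$ with $\tfrac1p=\tfrac12+\tfrac{1-\alpha/2}{d}$. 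This inequality is scaling-consistent but false: it asserts the embedding $\dot W^{\alpha/2,2}\hookrightarrow\dot W^{1,p}$, which goes in the wrong direction (Sobolev embedding trades derivatives for integrability, never the reverse; here you would need $\alpha/2\ge 1$). A high-frequency test function $\omega=N^{-\alpha/2}e^{iN\cdot x}$ has bounded $\dot H^{\alpha/2}$ norm while $\|\nabla\omega\|_{L^p}\sim N^{1-\alpha/2}\to\infty$. So you have merely traded the bad Kato--Ponce term $\|\Lambda^{\alpha/2}u\|\,\|\nabla\omega\|_{L^\infty}$ that you correctly identified as uncontrollable for an equally uncontrollable term $\|\nabla u\|_{L^\infty}\|\nabla\omega\|_{L^p}$: a full derivative of $\omega$ cannot be dominated by $\alpha/2<1$ derivatives. (A secondary, fixable issue: after the Lipschitz bound the lattice sum $\sum_k|x-y+k|^{-(d+\alpha/2-1)}$ diverges at infinity, so the far-field part must be estimated with $\|u\|_{L^\infty}$ and the original kernel.)

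To close the argument one must keep the difference structure in $\omega$ rather than in $u$ alone. The references the paper cites do this either (i) Lagrangianly: $\omega(t,x)=\rho_0(\Phi_t^{-1}(x))$ with $\Phi_t$ measure-preserving and $\|\nabla\Phi_t^{\pm1}\|_{L^\infty}\le\exp\bigl(\int_0^t\|\nabla u\|_{L^\infty}ds\bigr)$, combined with the Gagliardo characterization $\|\omega\|_{\dot H^{\alpha/2}}^2\simeq\iint|\omega(x)-\omega(y)|^2|x-y|^{-d-\alpha}dxdy$ (valid since $\alpha/2<1$) and a change of variables, which directly yields $\|\omega(t)\|_{\dot H^{\alpha/2}}\le\|\nabla\Phi_t^{-1}\|_{L^\infty}^{(d+\alpha)/2}\|\rho_0\|_{\dot H^{\alpha/2}}$; or (ii) via the sharp commutator estimate $\|[\Lambda^{s},g]h\|_{L^2}\lesssim\|\nabla g\|_{L^\infty}\|h\|_{\dot H^{s-1}}$ for $0<s<1$, applied with $g=u_i$, $h=\partial_i\omega$, so that the right-hand side is $\|\nabla u\|_{L^\infty}\|\omega\|_{\dot H^{\alpha/2}}$; proving that estimate requires symmetrizing the singular integral so that $\omega(x)-\omega(y)$ appears, not bounding $\partial_i\omega(y)$ by itself. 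As written, your proof does not establish the lemma.
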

\begin{proof}
We can refer to \cite{Hopf.2018,Kiselev.2016}.
\end{proof}

\begin{remark}
 For the examples of relaxation enhancing flow and weakly mixing, we can refer to \cite{Constantin.2008,Fayad.2006,Fayad.2002,Hopf.2018,Kiselev.2016}.
\end{remark}

\vskip .2in
\subsection{Local well-posedness of (\ref{eq:1.1})} We provide a local existence of the solution to (\ref{eq:1.1}) and some basic properties.

\vskip .1in
\begin{theorem}\label{thm:2.6}
Let $0<\alpha\leq2, \beta\in [2,d+1),d\geq2$, $\rho_0\in H^3(\mathbb{T}^d)$ be a non-negative initial data, then there exist time $T=T(\rho_0, \alpha)>0$ and unique non-negative solution $\rho(t,x)$ of (\ref{eq:1.1}), such that
$$
\rho(t,x)\in C([0,T]; H^3(\mathbb{T}^d)),
$$
and
$$
\|\rho(t,\cdot)\|_{L^1}=\|\rho_0\|_{L^1}.
$$
Furthermore, under the restriction $\alpha>1$, the solution is smooth.
\end{theorem}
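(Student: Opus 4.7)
The plan is to construct the solution by a standard regularization-plus-energy scheme. I would introduce approximate problems
$$
\partial_t\rho^\epsilon + u\cdot\nabla\rho^\epsilon + (-\Delta)^{\frac{\alpha}{2}}\rho^\epsilon + \nabla\cdot(\rho^\epsilon B(J_\epsilon\rho^\epsilon)) = \epsilon\Delta\rho^\epsilon,
$$
where $J_\epsilon$ is a Friedrichs mollifier, and solve them on a uniform time interval by a fixed-point argument in $C([0,T];H^3)$ (the full Laplacian $\epsilon\Delta$ makes the linear part generate a compact semigroup on $H^3$, which makes the ODE-type fixed point straightforward). Passing to the limit $\epsilon\to 0$ using uniform $H^3$ bounds and an Aubin--Lions compactness in a weaker norm would yield $\rho\in C([0,T];H^3)$. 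Uniqueness would follow from an $L^2$ difference estimate, using the fact that $B$ has effective order $\beta-d-1\leq -1$, so that $\|B(\rho)\|_{L^\infty}$ and $\|\nabla\cdot B(\rho)\|_{L^\infty}$ are controlled by $\|\rho\|_{H^3}$ via Sobolev embedding.

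The main step is a uniform $H^3$ a priori estimate. Differentiating (\ref{eq:1.1}) by $\partial^\gamma$ for $|\gamma|\leq 3$ and testing against $\partial^\gamma\rho$, the advection part (using $\nabla\cdot u=0$) produces a commutator $\int [\partial^\gamma, u\cdot\nabla]\rho\,\partial^\gamma\rho\,dx$ bounded by $C\|u\|_{W^{3,\infty}}\|\rho\|_{H^3}^2$; the fractional dissipation contributes a non-negative $\|(-\Delta)^{\alpha/4}\partial^\gamma\rho\|_{L^2}^2$; and the chemotactic term $\int \partial^\gamma\nabla\cdot(\rho B(\rho))\,\partial^\gamma\rho\,dx$ can be handled by Moser-type product inequalities together with the regularizing bound $\|B(\rho)\|_{H^3}\lesssim \|\rho\|_{H^3}$. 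Summing over $\gamma$ yields a differential inequality of the form $\frac{d}{dt}\|\rho\|_{H^3}^2\leq C(1+\|\rho\|_{H^3})\|\rho\|_{H^3}^2$, which by Gronwall gives a short-time bound with $T=T(\|\rho_0\|_{H^3},\alpha)$.

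For non-negativity, I would test the regularized equation against the negative part $\rho^\epsilon_- := \min(\rho^\epsilon,0)$ (with a Stampacchia-type smoothing if needed). The advection and auxiliary $\epsilon\Delta$ terms are dissipative, Lemma \ref{lem:2.1} handles the fractional Laplacian, and the chemotactic term is controlled by $(\|B(\rho^\epsilon)\|_{L^\infty}+\|\nabla\cdot B(\rho^\epsilon)\|_{L^\infty})\|\rho^\epsilon_-\|_{L^2}^2$; Gronwall then gives $\rho^\epsilon_-\equiv 0$, a property preserved in the limit. For $L^1$ conservation, I would integrate (\ref{eq:1.1}) over $\mathbb{T}^d$: the divergence-free advection and the nonlinear divergence integrate to zero, and $\int_{\mathbb{T}^d}(-\Delta)^{\alpha/2}\rho\,dx=0$ by Lemma \ref{lem:2.2}; combined with $\rho\geq 0$ this gives $\|\rho(t)\|_{L^1}=\|\rho_0\|_{L^1}$.

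For the smoothness claim when $\alpha>1$, the fractional dissipation (of order $\alpha>1$) dominates both the advection (order $1$) and the chemotactic drift (of effective order $\leq 1$ since $B$ is of non-positive order), so the equation is subcritical. Once $\rho\in L^\infty_T H^3$ is established, I would bootstrap to $\rho\in L^\infty_T H^s$ for every $s\geq 3$ by induction on $s$, each step using the same type of Moser energy estimate combined with the subcritical gain from $(-\Delta)^{\alpha/2}$. The main obstacle throughout is the nonlinear chemotactic term $\nabla\cdot(\rho B(\rho))$: because $B$ is a non-local operator with singular kernel $\nabla K\sim -x/|x|^\beta$, the commutator and product estimates have to be performed with some care, and the analysis becomes progressively more delicate as $\beta$ approaches $d+1$, where $B$ loses its regularizing character.
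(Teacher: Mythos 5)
The paper does not actually prove Theorem \ref{thm:2.6}: its ``proof'' is a one-line appeal to the standard local theory in the cited references. Your mollification-plus-energy scheme is precisely the standard argument being invoked, and the main ingredients are in the right place: uniform $H^3$ bound via commutator and Moser-type product estimates (using $\nabla\cdot u=0$ and the positivity of the dissipation), non-negativity by testing the regularized equation against the negative part together with Lemma \ref{lem:2.1}, $L^1$ conservation from Lemma \ref{lem:2.2} plus positivity, and a bootstrap for smoothness when $\alpha>1$. One correction of detail: the order of $B$ is $\beta-d-1$, which is $\le -1$ only for $\beta\le d$; for $\beta\in(d,d+1)$ (allowed in this theorem, though excluded from the paper's main results) the divergence $\nabla\cdot B(\rho)=\Delta K\ast\rho$ has \emph{positive} order $\beta-d$, so the bound $\|\nabla\cdot B(\rho)\|_{L^\infty}\lesssim\|\rho\|_{H^3}$ rests on the embedding $H^{3-(\beta-d)}\hookrightarrow L^\infty$, which costs $d/2$ derivatives and becomes tight (and eventually fails) for large $d$ with $\beta$ near $d+1$. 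This caveat attaches equally to the paper's unproved statement in that range, so it is not a defect of your argument relative to the paper's; for the range $\beta\in[2,d]$ actually used later, your sketch goes through as written.
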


\begin{proof}
The proof of Theorem \ref{thm:2.6} is standard and it is similar to the one in \cite{Ascasibar.2013, Li.2010}.
\end{proof}

\vskip .3in
\section{Proof of Theorem \ref{thm:1.1} ($\beta=d, d\geq2$)}
\vskip .1in
In this section, we consider the classical Keller-Segel system with fractional dissipation and weakly mixing. We establish the $L^\infty$-criterion, and get the $L^\infty$ estimate of the solution.
\subsection{$L^\infty$-criterion}
We show that to get the global classical solution of (\ref{eq:1.1}), we only need to have certain control of spatial $L^\infty$ norm of the solution.

\begin{proposition}\label{prop:3.1}
Suppose that $0<\alpha<2,\beta=d, d\geq2$, for any initial data $\rho_0\geq0, \rho_0\in H^3(\mathbb{T}^d)\cap L^{\infty}(\mathbb{T}^d)$. Then the following criterion holds: either the location solution to (\ref{eq:1.1}) extends to a global classical solution or there exists $T^*\in (0,\infty)$, such that
$$
\int_0^\tau \|\rho(t,\cdot)\|_{L^\infty}dt \xrightarrow{\tau\rightarrow T^*}\infty.
$$
\end{proposition}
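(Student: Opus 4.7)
The plan is to argue by contradiction using the maximal existence time supplied by Theorem \ref{thm:2.6}. Let $[0, T^*)$ be the maximal interval of existence; if $T^* = \infty$ we are done, so assume $T^* < \infty$ and, toward a contradiction, that $\int_0^{T^*}\|\rho(t,\cdot)\|_{L^\infty}\,dt < \infty$. The strategy is to propagate a uniform bound on $\|\rho(t,\cdot)\|_{H^3}$ on $[0, T^*)$ under this hypothesis; then Theorem \ref{thm:2.6} applied at a time slightly before $T^*$ produces a local extension of the solution past $T^*$, contradicting maximality.

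First I would carry out an $L^2$ energy estimate. Multiplying (\ref{eq:1.1}) by $\rho$ and integrating over $\mathbb{T}^d$, the transport term vanishes by incompressibility of $u$, the fractional dissipation contributes the nonnegative quantity $\|(-\Delta)^{\alpha/4}\rho\|_{L^2}^2$ by Lemma \ref{lem:2.1}, and the chemotactic term, after one integration by parts and use of the identity $\nabla\cdot B(\rho) = -\rho$ (valid when $\beta=d$, since $B(\rho)=\nabla(-\Delta)^{-1}\rho$), reduces to
\[
\tfrac{1}{2}\int_{\mathbb{T}^d}\rho^3\,dx \leq \tfrac{1}{2}\|\rho\|_{L^\infty}\|\rho\|_{L^2}^2.
\]
A standard Gronwall argument then gives $\|\rho(t)\|_{L^2}^2 \leq \|\rho_0\|_{L^2}^2\exp\bigl(\int_0^t\|\rho(s,\cdot)\|_{L^\infty}\,ds\bigr)$, uniformly bounded on $[0,T^*)$ by hypothesis.

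Next, I would bootstrap to higher Sobolev norms. Applying $D^\gamma$ for $|\gamma|\leq 3$ to (\ref{eq:1.1}), pairing with $D^\gamma \rho$ in $L^2$, and summing, the transport term produces a commutator $[D^\gamma, u\cdot\nabla]\rho$ which by a Kato--Ponce commutator estimate is controlled by $C\|u\|_{H^{s_0}}\|\rho\|_{H^3}^2$ for some fixed $s_0=s_0(d)$; since $u$ is smooth, this is harmless. For the nonlinear term $D^\gamma\nabla\cdot(\rho B(\rho))$ the identity $\nabla\cdot B(\rho)=-\rho$ gives the decomposition $-D^\gamma(\rho^2) + D^\gamma(B(\rho)\cdot\nabla\rho)$. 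The first piece is estimated by the Kato--Ponce product bound $\|D^\gamma(\rho^2)\|_{L^2}\leq C\|\rho\|_{L^\infty}\|\rho\|_{H^3}$. For the second piece, isolating the principal transport part against $D^\gamma\rho$ and integrating by parts once more yields
\[
\int_{\mathbb{T}^d} B(\rho)\cdot\nabla D^\gamma\rho\; D^\gamma\rho\,dx = \tfrac{1}{2}\int_{\mathbb{T}^d}\rho\,(D^\gamma\rho)^2\,dx \leq \tfrac{1}{2}\|\rho\|_{L^\infty}\|\rho\|_{H^3}^2,
\]
and the remaining commutator $[D^\gamma, B(\rho)]\cdot\nabla\rho$ is handled via $L^p$-boundedness of Riesz transforms (since $B$ is of order $-1$), combined with Sobolev embedding and, when necessary, absorption into the dissipation term $\|(-\Delta)^{\alpha/4}D^\gamma\rho\|_{L^2}^2$.

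Assembling these bounds produces a differential inequality of the form
\[
\ddt\|\rho\|_{H^3}^2 \leq C_u\bigl(1+\|\rho\|_{L^\infty}\bigr)\bigl(1+\|\rho\|_{H^3}^2\bigr),
\]
and Gronwall together with the hypothesis $\int_0^{T^*}\|\rho\|_{L^\infty}\,dt < \infty$ gives $\sup_{t<T^*}\|\rho(t,\cdot)\|_{H^3}<\infty$, so that Theorem \ref{thm:2.6} applied at a time close to $T^*$ contradicts maximality. The main technical obstacle is the top-order commutator $[D^\gamma, B(\rho)]\cdot\nabla\rho$: because $B$ has order $-1$ in the $\beta=d$ regime, derivatives of $B(\rho)$ carry no smoothing, and naive product estimates yield a cubic-in-norm term without a $\|\rho\|_{L^\infty}$ prefactor, which would not close the Gronwall argument. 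The algebraic cancellation $\nabla\cdot B(\rho)=-\rho$ used above is essential for pushing the top derivative onto $\rho$ itself and recovering the $\|\rho\|_{L^\infty}$ factor; the subprincipal commutator remainders must then be absorbed into the fractional dissipation or controlled by Sobolev embedding in the dimensions where $H^3\hookrightarrow W^{1,\infty}$ fails.
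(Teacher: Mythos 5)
Your overall strategy---an a priori $H^3$ energy estimate controlled by the $L^\infty$ norm of $\rho$, followed by continuation via Theorem \ref{thm:2.6}---is the same as the paper's. The difference lies in how the estimate is closed, and it is there that your argument has a gap. You claim the differential inequality
\[
\frac{d}{dt}\|\rho\|_{H^3}^2 \leq C_u\bigl(1+\|\rho\|_{L^\infty}\bigr)\bigl(1+\|\rho\|_{H^3}^2\bigr),
\]
which would indeed close by Gronwall under mere time-integrability of $\|\rho\|_{L^\infty}$. But the top-order commutator does not deliver this bound. The dangerous piece is the one in which exactly one derivative falls on $B(\rho)$ and all remaining derivatives fall on $\rho$: schematically $\int_{\mathbb{T}^d}\nabla B(\rho)\cdot\nabla D^2\rho\,D^3\rho\,dx$, where $\nabla B(\rho)=\nabla\otimes\nabla(-\Delta)^{-1}\rho$ is a zeroth-order (double Riesz) operator applied to $\rho$. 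To get $\|\rho\|_{L^\infty}\|\rho\|_{H^3}^2$ you would need $\|\nabla B(\rho)\|_{L^\infty}\lesssim\|\rho\|_{L^\infty}$, which is false since Riesz transforms are unbounded on $L^\infty$; the other Kato--Ponce endpoint requires $\|\nabla\rho\|_{L^\infty}$, which $H^3$ does not control once $d\geq4$. You flag exactly this obstacle, but the fallback you propose---absorption into the dissipation---does not rescue the stated inequality: placing $\nabla B(\rho)$ in $L^p$ with $p<\infty$ forces $D^3\rho$ into $L^q$ with $q>2$, and after interpolating against $\|\rho\|_{\dot{H}^{3+\frac{\alpha}{2}}}$ and applying Young's inequality the surviving coefficient is a power of $\|\rho\|_{L^\infty}$ strictly greater than one (tending to $1$ only as $p\to\infty$). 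Gronwall then needs $\int_0^{T^*}\|\rho\|_{L^\infty}^{1+\delta}\,dt<\infty$, which your hypothesis does not supply.

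The paper closes the estimate differently and under a stronger hypothesis: it assumes $\|\rho(t,\cdot)\|_{L^\infty}$ is bounded pointwise in time (which is all that is used later, since Proposition \ref{prop:3.3} provides a uniform bound), estimates the intermediate terms by Gagliardo--Nirenberg against finite Lebesgue exponents to produce a superquadratic remainder $C\|\rho\|_{\dot{H}^3}^{1+\theta_1+\theta_2}$, and then dominates that remainder by the fractional dissipation through $\|\rho\|_{\dot{H}^{3+\frac{\alpha}{2}}}^2\geq C\|\rho\|_{\dot{H}^3}^{\gamma}$ with $\gamma>1+\theta_1+\theta_2$. If you adopt that closure your argument goes through, but you should then either weaken your hypothesis to a pointwise-in-time bound on $\|\rho\|_{L^\infty}$ (matching what the paper actually proves and uses) or supply a genuinely new treatment of the commutator term identified above.
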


\begin{proof}
We only need to derive a priori bounds on higher order derivatives in terms of $L^\infty$ norm of the solution. Assume $\rho(t,x)$ is the solution of equation (\ref{eq:1.1}), and $\|\rho(t,\cdot)\|_{L^\infty}$ is bound. Let us multiply  both sides of (\ref{eq:1.1}) by $(-\Delta)^3\rho$ and integrate over $\mathbb{T}^d$, we obtain
\begin{equation}\label{eq:3.42}
\begin{aligned}
\frac{1}{2}\frac{d}{dt}\|\rho\|_{\dot{H}^3}^2&+\int_{\mathbb{T}^d}u\cdot \nabla \rho(-\Delta)^3\rho dx\\
&+\int_{\mathbb{T}^d}(-\Delta)^{\frac{\alpha}{2}}\rho(-\Delta)^3\rho dx+\int_{\mathbb{T}^d}\nabla\cdot(\rho B(\rho))(-\Delta)^3\rho dx=0.
\end{aligned}
\end{equation}
We use step-by-step integration and the incompressibility of $u$ to obtain
\begin{equation}\label{eq:3.43}
\left|\int_{\mathbb{T}^d}u\cdot \nabla \rho(-\Delta)^3\rho dx\right|\leq C\|u\|_{C^3}\|\rho\|_{\dot{H}^3}^2.
\end{equation}
And the third term of the left hand side of (\ref{eq:3.42}) is equal to
\begin{equation}\label{eq:3.44}
\int_{\mathbb{T}^d}(-\Delta)^{\frac{\alpha}{2}}\rho(-\Delta)^3\rho dx=\|\rho\|_{\dot{H}^{3+\frac{\alpha}{2}}}^2.
\end{equation}
For the fourth term of the left hand side of (\ref{eq:3.42}), we yield that
\begin{equation}\label{eq:3.45}
\begin{aligned}
&\int_{\mathbb{T}^d}\nabla\cdot(\rho B(\rho))(-\Delta)^3\rho dx=\int_{\mathbb{T}^d}\nabla\rho\cdot(\nabla(-\Delta)^{-1}\rho)(-\Delta)^3\rho dx-\int_{\mathbb{T}^d}\rho^2(-\Delta)^3\rho dx.
\end{aligned}
\end{equation}
According to integrating by parts to the second term of the right hand side of (\ref{eq:3.45}), we  obtain that
$$
\int_{\mathbb{T}^d}\rho^2(-\Delta)^3\rho dx
\sim\sum_{l=0}^3\int_{\mathbb{T}^d}D^l\rho D^{3-l}\rho D^3\rho dx,
$$
where $l=0,1,2.3$ and $D$ denote any partial derivative. By H\"{o}lder inequality, one has
$$
\sum_{l=0,3}\int_{\mathbb{T}^d}D^l\rho D^{3-l}\rho D^3\rho dx\leq 2\|\rho\|_{L^\infty}\|\rho\|_{\dot{H}^{3}}^2,
$$
and
$$
\sum_{l=1}^2\int_{\mathbb{T}^d}D^l\rho D^{3-l}\rho D^3\rho dx\leq
2\|D\rho\|_{L^6}\|D^2\rho\|_{L^3}\|\rho\|_{\dot{H}^{3}}.
$$
By interpolation inequality, then for any $1\leq q'_0\leq\infty$, we get
\begin{equation}\label{eq:3.46}
\|\rho\|_{L^{q'_0}}\leq \|\rho\|^{\frac{1}{q'_0}}_{L^1}\|\rho\|^{1-\frac{1}{q'_0}}_{L^\infty},
\end{equation}
by Gagliardo-Nirenberg inequality, there exist $1\leq q'_1, q'_2\leq\infty$, such that
\begin{equation}\label{eq:1}
\|D \rho\|_{L^6}\leq C\|\rho\|^{1-\theta_1}_{L^{q'_1}}\|\rho\|^{\theta_1}_{\dot{H}^{3}},
\end{equation}
and
\begin{equation}\label{eq:2}
\|D^2 \rho\|_{L^3}\leq C\|\rho\|^{1-\theta_2}_{L^{q'_2}}\|\rho\|^{\theta_2}_{\dot{H}^{3}},
\end{equation}
where
$$
\theta_1=\frac{6-d(1-\frac{6}{q'_1})}{18-d(3-\frac{6}{q'_1})},\quad \theta_2=\frac{12-d(2-\frac{6}{q'_2})}{18-d(3-\frac{6}{q'_2})}.
$$
Due to $\|\rho\|_{L^1}$ conservation and $\|\rho\|_{L^\infty}$ is bound, according to (\ref{eq:3.46}), (\ref{eq:1}) and (\ref{eq:2}), we obtain
$$
\begin{aligned}
\|D\rho\|_{L^6}\|D^2\rho\|_{L^3}\|\rho\|_{\dot{H}^{3}}\leq
C\|\rho\|^{1-\theta_1}_{L^{q'_1}}\|\rho\|^{1-\theta_2}_{L^{q'_2}}
\|\rho\|^{1+\theta_1+\theta_2}_{\dot{H}^{3}}\leq C\|\rho\|^{1+\theta_1+\theta_2}_{\dot{H}^{3}}.
\end{aligned}
$$
Therefore, we have
\begin{equation}\label{eq:3.47}
\int_{\mathbb{T}^d}\rho^2(-\Delta)^3\rho dx\leq C\|\rho\|_{L^\infty}\|\rho\|_{\dot{H}^{3}}^2+C\|\rho\|^{1+\theta_1+\theta_2}_{\dot{H}^{3}}.
\end{equation}
And the first term of the right hand side of (\ref{eq:3.45}), integrating by parts, we get terms that can be estimated similarly to the second term of the right hand side of (\ref{eq:3.45}). The only exceptional terms that appear which have different structure (see \cite{Kiselev.2016}) are
$$
\int_{\mathbb{T}^d}(\partial_{i_1}\partial_{i_2}\partial_{i_3}
\nabla\rho)\cdot(\nabla(-\Delta)^{-1}\rho)\partial_{i_1}\partial_{i_2}\partial_{i_3}
\rho dx,
$$
while these can be reduced to
$$
\int_{\mathbb{T}^d}\rho(\partial_{i_1}\partial_{i_2}\partial_{i_3}
\rho)^2 dx,
$$
 and according to the estimation as before, we get
\begin{equation}\label{eq:3.48}
\int_{\mathbb{T}^d}\nabla\rho\cdot(\nabla(-\Delta)^{-1}\rho)(-\Delta)^3\rho dx\leq C\|\rho\|_{L^\infty}\|\rho\|_{\dot{H}^{3}}^2+C\|\rho\|^{1+\theta_1+\theta_2}_{\dot{H}^{3}}.
\end{equation}
Thus, we deduce  by (\ref{eq:3.47}) and (\ref{eq:3.48}) that
\begin{equation}\label{eq:3.49}
\int_{\mathbb{T}^d}\nabla\cdot(\rho B(\rho))(-\Delta)^3\rho dx\leq C\|\rho\|_{L^\infty}\|\rho\|_{\dot{H}^{3}}^2+C\|\rho\|^{1+\theta_1+\theta_2}_{\dot{H}^{3}}.
\end{equation}
Combing (\ref{eq:3.42}), (\ref{eq:3.43}), (\ref{eq:3.44}) and (\ref{eq:3.49}), we imply that
\begin{equation}\label{eq:3.50}
\frac{d}{dt}\|\rho\|_{\dot{H}^3}^2\leq -2\|\rho\|_{\dot{H}^{3
+\frac{\alpha}{2}}}^2
+C(\|u\|_{C^3}+\|\rho\|_{L^\infty})\|\rho\|_{\dot{H}^{3}}^2
+C\|\rho\|^{1+\theta_1+\theta_2}_{\dot{H}^{3}}.
\end{equation}
By Gagliardo-Nirenberg inequality, for any $1\leq q'_3\leq \infty$, we obtain
\begin{equation}\label{eq:3.51}
\|\rho\|_{\dot{H}^3}\leq C\|\rho\|^{1-\theta}_{L^{q'_3}}\|\rho\|^{\theta}_{\dot{H}^{3
+\frac{\alpha}{2}}},
\end{equation}
where
$$
\theta=\frac{\frac{2d}{q'_3}+6-d}{\frac{2d}{q'_3}+6-d+\alpha}.
$$
We denote
$$
\gamma=\frac{2}{\theta}=\frac{\frac{4d}{q'_3}+12-2d+2\alpha}{\frac{2d}{q'_3}+6-d},
$$
according to (\ref{eq:3.46}) and (\ref{eq:3.51}), we get
$$
\|\rho\|^\gamma_{\dot{H}^3}\leq C\|\rho\|^{(1-\theta)\gamma}_{L^{q'_3}}\|\rho\|^{2}_{\dot{H}^{3
+\frac{\alpha}{2}}}\leq C_4 \|\rho\|^{2}_{\dot{H}^{3
+\frac{\alpha}{2}}},
$$
thus, we have
\begin{equation}\label{eq:3.52}
-\|\rho\|^{2}_{\dot{H}^{3+\frac{\alpha}{2}}}\leq
-C_4^{-1}\|\rho\|^\gamma_{\dot{H}^3}\leq-C\|\rho\|^\gamma_{\dot{H}^3}.
\end{equation}
According to (\ref{eq:3.50}) and (\ref{eq:3.52}), one has
\begin{equation}\label{eq:3.53}
\frac{d}{dt}\|\rho\|_{\dot{H}^3}^2\leq -C\|\rho\|^\gamma_{\dot{H}^3}
+C(\|u\|_{C^3}+\|\rho\|_{L^\infty})\|\rho\|_{\dot{H}^{3}}^2
+C\|\rho\|^{1+\theta_1+\theta_2}_{\dot{H}^{3}}.
\end{equation}
Due to the $\|u\|_{C^3}$, $\|\rho\|_{L^\infty}$ are bound, and we choose $q'_3$, such that
$$
2<1+\theta_1+\theta_2<\gamma,
$$
then we know that the $\|\rho\|_{\dot{H}^3}$ is bound. Because $\|\rho\|_{L^2}$ bound is obvious, by the definition of $\|\rho\|_{H^3}$, we imply that the $\|\rho(t,\cdot)\|_{H^3}$ is bound. Namely, There exists a constant $ C_{H^3}=C(\|\rho\|_{L^\infty}, \|\rho_0\|_{H^3})$, such that
$$
\|\rho(t,\cdot)\|_{H^3}\leq C_{H^3}.
$$
This completes the proof of Proposition \ref{prop:3.1}.
\end{proof}

\begin{remark}
 Particularly, if $\|\rho\|_{L^\infty}$  is bounded only in $[0,T]$, then the $\|\rho(t,\cdot)\|_{H^3}$ is bound in $[0,T]$.
\end{remark}

\subsection{$L^\infty$ estimate of $\rho$}
We establish the $L^\infty$ estimate of the solution to equation (\ref{eq:1.1}). The important technique we use is the nonlinear maximum  principle on periodic box, the details can refer to the Appendix.

\vskip .1in
Let us denote by $\overline{x}_t$ the point such that
$$
\rho(t,\overline{x}_t)=\max_{x\in \mathbb{T}^d}\rho(t,x).
$$
Let us also define
$$
\widetilde{\rho}(t)=\rho(t,\overline{x}_t),
$$
then for any fixed $t\geq0$, using vanishing of a derivation at the point of maximum, we see that
$$
\partial_t \rho(t,\overline{x}_t)=\frac{d}{dt}\widetilde{\rho}(t),\quad
(u\cdot \nabla\rho)(t,\overline{x}_t)=u\cdot \nabla\widetilde{\rho}(t)=0,
$$
and
$$
\left(\nabla\cdot(\rho B(\rho))\right)(t,\overline{x}_t)=-(\widetilde{\rho}(t))^2,
$$
if we denote
$$
(-\Delta)^{\frac{\alpha}{2}}\rho(t,x)\big|_{x=\overline{x}_t}
=(-\Delta)^{\frac{\alpha}{2}}\widetilde{\rho}(t),
$$
we deduce that by (\ref{eq:1.1}) the evolution of $\widetilde{\rho}$ follows

\begin{equation}\label{eq:3.1}
\frac{d}{dt}\widetilde{\rho}+(-\Delta)^{\frac{\alpha}{2}}\widetilde{\rho}
-\widetilde{\rho}^2=0.
\end{equation}
According to the nonlinear maximum principle (see Lemma \ref{lem:5.1}), one has
\begin{equation}\label{eq:3.2}
\widetilde{\rho}(t)\leq C(d,p)\|\rho\|_{L^p},
\end{equation}
if not, then we imply that
\begin{equation}\label{eq:3.3}
(-\Delta)^{\frac{\alpha}{2}}\widetilde{\rho}(t)\geq C(\alpha, d,p)\frac{(\widetilde{\rho}(t))^{1+\frac{p\alpha}{d}}}{\|\rho\|_{L^p}^{\frac{p\alpha}{d}}}.
\end{equation}
Thus, we deduce by (\ref{eq:3.1}) and (\ref{eq:3.3}) that
$$
\frac{d}{dt}\widetilde{\rho}=\widetilde{\rho}^2
-(-\Delta)^{\frac{\alpha}{2}}\widetilde{\rho}\leq \widetilde{\rho}^2
-C(\alpha, d,p)\frac{\widetilde{\rho}^{1+\frac{p\alpha}{d}}}{\|\rho\|_{L^p}^{\frac{p\alpha}{d}}},
$$
so we have
\begin{equation}\label{eq:3.4}
\frac{d}{dt}\widetilde{\rho}\leq \widetilde{\rho}^2
-C(\alpha, d,p)\frac{\widetilde{\rho}^{1+\frac{p\alpha}{d}}}{\|\rho\|_{L^p}^{\frac{p\alpha}{d}}}.
\end{equation}

\begin{remark}
For the case of (\ref{eq:3.2}), we can get the $L^\infty$ estimate directly by $L^p$ estimate. So we will only consider the case of (\ref{eq:3.3}) in the proof.
\end{remark}

\vskip .1in
First, we need the local $L^2$ and $L^\infty$ estimates of the solution.

\begin{lemma}\label{lem:3.1}
Let $0<\alpha<2, \beta=d,d\geq2$, $\rho(t,x)$ is the local solution of equation (\ref{eq:1.1}) with initial data $\rho_0(x)\geq0$. Suppose that $\|\rho_0\|_{L^2}= B_0,  \|\rho_0\|_{L^\infty}\leq C_{\infty}$. Then there exists a time $\tau_1>0$, for any $0\leq t \leq \tau_1$, we have
$$
\|\rho(t,\cdot)-\overline{\rho}\|_{L^2}\leq 2(B_0^2-\overline{\rho}^2)^{\frac{1}{2}}, \quad \|\rho(t,\cdot)\|_{L^\infty}\leq 2C_{\infty},
$$
where
$$
\overline{\rho}=\frac{1}{|\mathbb{T}^d|}\int_{\mathbb{T}^d}\rho_0(x)dx.
$$
\end{lemma}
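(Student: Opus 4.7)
My plan is to establish the two bounds in sequence. First I would prove the $L^\infty$ bound by exploiting the pointwise ODE (\ref{eq:3.1}) derived from the nonlinear maximum principle, and then I would feed the resulting $L^\infty$ control into a standard $L^2$ energy estimate to handle the cubic drift term that would otherwise be uncontrollable.

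For the $L^\infty$ step, I note that at the maximum point the singular-integral representation (\ref{eq:2.1}) together with $\rho \geq 0$ (preserved by the flow) gives $(-\Delta)^{\frac{\alpha}{2}}\widetilde{\rho}(t) \geq 0$, so (\ref{eq:3.1}) collapses to the scalar differential inequality $\frac{d}{dt}\widetilde{\rho}(t) \leq \widetilde{\rho}(t)^2$. ODE comparison against a Riccati equation yields $\widetilde{\rho}(t) \leq \widetilde{\rho}(0)/(1-t\widetilde{\rho}(0))$, valid for $t < 1/\widetilde{\rho}(0)$. Choosing any $\tau_1 \leq 1/(2C_\infty)$ forces $\|\rho(t,\cdot)\|_{L^\infty} = \widetilde{\rho}(t) \leq 2\widetilde{\rho}(0) \leq 2C_\infty$ on $[0,\tau_1]$.

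For the $L^2$ step, I would work with the zero-mean quantity $\eta := \rho - \overline{\rho}$, which is natural because the attractive kernel $B(\rho) = \nabla(-\Delta)^{-1}\rho$ is only defined modulo constants on the torus and satisfies $B(\rho) = B(\eta)$ with the useful identity $\nabla \cdot B(\eta) = -\eta$; the mean $\overline{\rho}$ is conserved in time since the equation is in divergence form and $u$ is divergence-free. Testing (\ref{eq:1.1}) against $\eta$, the transport term vanishes by incompressibility and the fractional-dissipation term contributes $\|\eta\|_{\dot{H}^{\alpha/2}}^2 \geq 0$, which may be dropped. Expanding $\nabla \cdot(\rho B(\rho)) = \nabla \eta \cdot B(\eta) + (\eta+\overline{\rho})\nabla\cdot B(\eta)$ and integrating by parts once on the first summand (using $\nabla\cdot B(\eta) = -\eta$ again) should collapse everything to
\[
\tfrac{1}{2}\tfrac{d}{dt}\|\eta\|_{L^2}^2 \leq \tfrac{1}{2}\int_{\mathbb{T}^d}\eta^{3}\,dx + \overline{\rho}\|\eta\|_{L^2}^2.
\]
Now the $L^\infty$ bound from the first step, combined with $0 \leq \overline{\rho} \leq C_\infty$, gives $\|\eta\|_{L^\infty} \leq 3C_\infty$, so $|\int \eta^3| \leq 3C_\infty \|\eta\|_{L^2}^2$ and the differential inequality reduces to $\frac{d}{dt}\|\eta\|_{L^2}^2 \leq C'(C_\infty)\|\eta\|_{L^2}^2$. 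Since $\|\eta(0)\|_{L^2}^2 = B_0^2 - \overline{\rho}^2$ (using $|\mathbb{T}^d|=1$), Gr\"onwall gives $\|\eta(t)\|_{L^2}^2 \leq e^{C't}(B_0^2-\overline{\rho}^2)$, and shrinking $\tau_1$ further so that $e^{C'\tau_1}\leq 4$ produces the desired factor-of-two estimate.

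The main obstacle I anticipate is the cubic interaction $\int \eta^3$ from the chemotactic drift: controlling it by the fractional dissipation alone is precisely the delicate issue flagged around inequality (\ref{eq:1.8}) in the introduction, and it fails outside the regime $\alpha > (\frac{1}{p}+\frac{1}{2})^{-1}$. The argument therefore sidesteps this by first extracting a pointwise $L^\infty$ bound from the maximum principle, which comes essentially for free on a short time interval because of the $\widetilde{\rho}^2$ Riccati blow-up rate; this is the reason the two estimates must be performed in this order, and also why the short-time window $\tau_1$ ultimately depends only on $C_\infty$ and $\overline{\rho}$ rather than on $B_0$ or on the relative strength of $\alpha$ and $\beta$.
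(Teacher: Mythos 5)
Your proposal is correct and follows essentially the same route as the paper: first the short-time $L^\infty$ bound via a Riccati comparison for $\widetilde{\rho}$ on a window of length $\sim 1/C_\infty$, then an $L^2$ energy estimate in which the cubic chemotactic term is controlled by the $L^\infty$ bound and closed by Gr\"onwall. The only (harmless) difference is that you use the elementary sign $(-\Delta)^{\alpha/2}\widetilde{\rho}\geq 0$ at the maximum instead of the quantitative nonlinear maximum principle of Lemma \ref{lem:5.1} with $p=1$, whose extra negative term the paper discards anyway.
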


\begin{proof}
Let us multiply both sides of (\ref{eq:1.1}) by $\rho-\overline{\rho}$ and integrate over $\mathbb{T}^d$, we obtain that
\begin{equation}\label{eq:3.5}
\begin{aligned}
\frac{1}{2}\frac{d}{dt}\|\rho-\overline{\rho}\|_{L^2}^2+&\int_{\mathbb{T}^d}u\cdot\nabla \rho(\rho-\overline{\rho})dx\\ &+\int_{\mathbb{T}^d}(-\Delta)^{\frac{\alpha}{2}}\rho(\rho-\overline{\rho})dx
+\int_{\mathbb{T}^d}\nabla\cdot(\rho B(\rho))(\rho-\overline{\rho})dx=0.
\end{aligned}
\end{equation}
Due to the $u$ is incompressible, we get
\begin{equation}\label{eq:3.6}
\int_{\mathbb{T}^d}u\cdot\nabla \rho(\rho-\overline{\rho})dx=0,
\end{equation}
and  according to  Lemma \ref{lem:2.1} and \ref{lem:2.2}, the third term of the left hand side of (\ref{eq:3.5}) be estimated as
\begin{equation}\label{eq:3.7}
\int_{\mathbb{T}^d}(-\Delta)^{\frac{\alpha}{2}}\rho(\rho-\overline{\rho})dx\geq \|\rho\|_{\dot{H}^{\frac{\alpha}{2}}}^2.
\end{equation}
For the fourth term of the left hand side of (\ref{eq:3.5}), we obtain
$$
\begin{aligned}
\int_{\mathbb{T}^d}\nabla\cdot(\rho B(\rho))(\rho-\overline{\rho})dx
=-\int_{\mathbb{T}^d}\rho^2(\rho-\overline{\rho})dx
+\frac{1}{2}\int_{\mathbb{T}^d}\rho(\rho-\overline{\rho})^2dx,
\end{aligned}
$$
and notice that
$$
\int_{\mathbb{T}^d}\rho^2 (\rho-\overline{\rho})dx=\int_{\mathbb{T}^d}(\rho-\overline{\rho})^3dx
+2\overline{\rho}\int_{\mathbb{T}^d}(\rho-\overline{\rho})^2dx,
$$
so we deduce that
\begin{equation}\label{eq:3.8}
\begin{aligned}
\int_{\mathbb{T}^d}\nabla\cdot(\rho B(\rho))(\rho-\overline{\rho})dx&=\frac{1}{2}\int_{\mathbb{T}^d}\rho (\rho-\overline{\rho})^2dx-\int_{\mathbb{T}^d}(\rho-\overline{\rho})^3dx\\
&\ \ \ \ -2\overline{\rho}\int_{\mathbb{T}^d}(\rho-\overline{\rho})^2dx\\
&\ \ \ \leq\left(\frac{1}{2}\|\rho\|_{L^\infty}+
 \|\rho-\overline{\rho}\|_{L^\infty}+2\overline{\rho}\right)\|\rho-\overline{\rho}\|_{L^2}^2.
\end{aligned}
\end{equation}
Due to the solution $\rho(t,x)$ of equation (\ref{eq:1.1}) is $L^1$ norm conservation,
by nonlinear maximum principle and  (\ref{eq:3.4}), one has
\begin{equation}\label{eq:3.9}
\frac{d}{dt}\widetilde{\rho}\leq \widetilde{\rho}^2
-C(\alpha, d)\frac{\widetilde{\rho}^{1+\frac{\alpha}{d}}}{\|\rho\|_{L^1}^{\frac{\alpha}{d}}}.
\end{equation}
If we define
$$
\tau_0=\min\left\{ \frac{1}{2C_\infty}, T\right\},
$$
where $T$ is lifespan.  Because $\|\rho_0\|_{L^\infty}\leq C_\infty$, by solving the differential inequality in (\ref{eq:3.9}), we imply that for any $0\leq t\leq \tau_0$, one has
\begin{equation}\label{eq:3.10}
\|\rho(t,\cdot)\|_{L^\infty}\leq 2C_\infty.
\end{equation}
According to (\ref{eq:3.8}) and (\ref{eq:3.10}), for any $0\leq t\leq \tau_0$, we deduce that
\begin{equation}\label{eq:3.11}
\begin{aligned}
 \int_{\mathbb{T}^d}\nabla\cdot(\rho B(\rho))(\rho-\overline{\rho})dx\leq 3(C_\infty+\overline{\rho})\|\rho-\overline{\rho}\|_{L^2}^2.
\end{aligned}
\end{equation}
Combing (\ref{eq:3.5}), (\ref{eq:3.6}), (\ref{eq:3.7}) and (\ref{eq:3.11}), we imply that
\begin{equation}\label{eq:3.12}
\frac{d}{dt}\|\rho-\overline{\rho}\|_{L^2}^2
 \leq-2\|\rho\|_{\dot{H}^{\frac{\alpha}{2}}}^2+
 6(C_\infty+\overline{\rho})\|\rho-\overline{\rho}\|_{L^2}^2.
\end{equation}
Due to $\|\rho_0-\overline{\rho}\|_{L^2}^2=B_0^2-\overline{\rho}^2$, by solving the differential inequality in (\ref{eq:3.12}),
we imply that there exists
$$
\tau_1=\min\left\{\tau_0, \frac{\ln 4}{6(C_\infty+\overline{\rho})}\right\},
$$
such that for any $0\leq t\leq\tau_1$,
we have
$$
\|\rho(t,\cdot)-\overline{\rho}\|_{L^2}\leq 2(B_0^2-\overline{\rho}^2)^{\frac{1}{2}}.
$$
According to (\ref{eq:3.10}) and the definition of $\tau_1$, for any $0\leq t\leq\tau_1$, we obtain
$$
\|\rho(t,\cdot)\|_{L^\infty}\leq 2C_\infty.
$$
This completes the proof of Lemma \ref{lem:3.1}.
\end{proof}

\begin{remark}
If there exist $0<\tau'\leq\tau_1$, such that $\int_0^{\tau'}\|\rho(t,\cdot)\|_{\dot{H}^{\frac{\alpha}{2}}}^2dt$ is large enough, then the $\|\rho(\tau',\cdot)-\overline{\rho}\|_{L^2}$ is obviously small. If not, we need an approximate of $\rho(t,x)$. And we also get the local $L^2$ estimate of the solution only dependent on $L^\infty$ estimate of the solution.
\end{remark}

\vskip .1in
Next, we give an  approximation lemma.
\begin{lemma}\label{lem:3.2}
Let $0<\alpha<2, \beta=d, d\geq2$, suppose that the vector field $u(t,x)$ is smooth incompressible flow, and $F(t)\in L^\infty_{loc}[0,\infty)$. Let $\rho(t,x), \omega(t,x)$ be the local solution of equation (\ref{eq:1.1}) and (\ref{eq:2.3}) respectively with $\rho_0\in H^3(\mathbb{T}^d)\cap L^{\infty}(\mathbb{T}^d), \rho_0\geq0$. Then for every $t\in [0,T]$, we have
$$
\begin{aligned}
\frac{d}{dt}\|\rho-\omega\|_{L^2}^2&\leq-\|\rho\|_{\dot{H}^{\frac{\alpha}{2}}}^2+
F^2(t)\|\rho_0\|^2_{\dot{H}^{\frac{\alpha}{2}}}+
\|\rho\|_{L^\infty}\|\rho\|^2_{L^2}\\
 &\qquad +C(\|\nabla\rho\|_{L^4}\|\rho\|_{L^4}+\|\rho\|_{L^\infty}\|\rho\|_{L^2})\|\rho_0\|_{L^2}.
\end{aligned}
$$
\end{lemma}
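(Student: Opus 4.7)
The plan is a direct $L^2$ energy estimate on the difference $v:=\rho-\omega$. Subtracting \eqref{eq:2.3} from \eqref{eq:1.1} yields
\begin{equation*}
\partial_t(\rho-\omega)+u\cdot\nabla(\rho-\omega)+(-\Delta)^{\frac{\alpha}{2}}\rho+\nabla\cdot(\rho B(\rho))=0.
\end{equation*}
Pairing with $\rho-\omega$ in $L^2(\mathbb{T}^d)$ and using $\mathrm{div}\,u=0$ to kill the transport term gives
\begin{equation*}
\frac{1}{2}\frac{d}{dt}\|\rho-\omega\|_{L^2}^2=-\int_{\mathbb{T}^d}(-\Delta)^{\frac{\alpha}{2}}\rho\,(\rho-\omega)\,dx-\int_{\mathbb{T}^d}\nabla\cdot(\rho B(\rho))(\rho-\omega)\,dx,
\end{equation*}
so everything reduces to bounding the two integrals on the right.

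For the dissipation integral I would split it as $\|\rho\|_{\dot{H}^{\alpha/2}}^2-\int_{\mathbb{T}^d}(-\Delta)^{\alpha/4}\rho\,(-\Delta)^{\alpha/4}\omega\,dx$ and treat the cross term by Cauchy--Schwarz and Young's inequality; half of $\|\rho\|_{\dot{H}^{\alpha/2}}^2$ is absorbed and the residual $\tfrac{1}{2}\|\omega\|_{\dot{H}^{\alpha/2}}^2$ is controlled by $\tfrac{1}{2}F(t)^2\|\rho_0\|_{\dot{H}^{\alpha/2}}^2$ using Lemma~\ref{lem:2.5}. Multiplying through by $2$ at the end produces precisely the first two terms of the asserted inequality.

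For the nonlinear integral I would integrate by parts to rewrite it as $\int\rho B(\rho)\cdot\nabla\rho-\int\rho B(\rho)\cdot\nabla\omega$. In the regime $\beta=d$ we have $B(\rho)=\nabla(-\Delta)^{-1}\rho$, so $\nabla\cdot B(\rho)=\Delta(-\Delta)^{-1}\rho=-\rho$ and the first piece reduces via $\int\rho B(\rho)\cdot\nabla\rho=\tfrac{1}{2}\int B(\rho)\cdot\nabla\rho^2=-\tfrac{1}{2}\int\rho^2\nabla\cdot B(\rho)=\tfrac{1}{2}\int\rho^3$, hence is bounded by $\tfrac{1}{2}\|\rho\|_{L^\infty}\|\rho\|_{L^2}^2$, yielding the third term in the bound. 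Integrating the second piece by parts once more gives $\int\omega\nabla\rho\cdot B(\rho)-\int\omega\rho^2$; because transport by the incompressible flow $u$ preserves every $L^p$ norm, $\|\omega(t,\cdot)\|_{L^2}=\|\rho_0\|_{L^2}$, and H\"older's inequality with exponents $(2,4,4)$ and $(2,\infty,2)$ respectively yields the last two terms of the stated bound, provided one has $\|B(\rho)\|_{L^4}\lesssim\|\rho\|_{L^4}$.

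The only technical point, really a bookkeeping step rather than a genuine obstacle, is this $L^4$ bound on $B(\rho)$. Since $\nabla B=\nabla\otimes\nabla(-\Delta)^{-1}$ has Calder\'on--Zygmund symbol $\xi\otimes\xi/|\xi|^2$ and is therefore bounded on every $L^p$ with $1<p<\infty$, one obtains $\|\nabla B(\rho)\|_{L^4}\le C\|\rho\|_{L^4}$; combining with the Hardy--Littlewood--Sobolev bound $\|B(\rho)\|_{L^4}\le C\|\rho\|_{L^q}$ at $\tfrac{1}{q}=\tfrac{1}{4}+\tfrac{1}{d}$ and the embedding $L^4(\mathbb{T}^d)\hookrightarrow L^q(\mathbb{T}^d)$ (valid since $q\le 4$ on the bounded torus) gives $\|B(\rho)\|_{L^4}\le C\|\rho\|_{L^4}$. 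Assembling all pieces and multiplying by $2$ produces the desired inequality.
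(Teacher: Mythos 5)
Your proposal is correct and follows essentially the same route as the paper: subtract the transport equation, do the $L^2$ energy estimate on $\rho-\omega$, kill the advection term by incompressibility, handle the dissipation cross term with Cauchy--Schwarz, Young, and Lemma~\ref{lem:2.5}, reduce $\int\rho B(\rho)\cdot\nabla\rho$ to $\tfrac12\int\rho^3$ using $\nabla\cdot B(\rho)=-\rho$, and bound the remaining term against $\|\omega\|_{L^2}=\|\rho_0\|_{L^2}$ via H\"older and the $L^4$-boundedness of $\rho\mapsto\nabla(-\Delta)^{-1}\rho$. The only cosmetic difference is that you integrate the cross term by parts back to $\int\omega\,\nabla\cdot(\rho B(\rho))\,dx$ before applying H\"older, while the paper applies Cauchy--Schwarz to that same integral directly; your explicit justification of $\|B(\rho)\|_{L^4}\lesssim\|\rho\|_{L^4}$ is a detail the paper leaves implicit.
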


\begin{proof}
By (\ref{eq:1.1}) and (\ref{eq:2.3}), we obtain the equation
\begin{equation}\label{eq:3.13}
\partial_t(\rho-\omega)+u\cdot \nabla(\rho-\omega)+(-\Delta)^{\frac{\alpha}{2}}\rho+\nabla\cdot(\rho B(\rho))=0.
\end{equation}
Let us multiply both sides of (\ref{eq:3.13}) by $\rho-\omega$ and integrate over $\mathbb{T}^d$, then
\begin{equation}\label{eq:3.14}
\begin{aligned}
\frac{1}{2}\frac{d}{dt}\|\rho-\omega\|_{L^2}^2+&\int_{\mathbb{T}^d}u\cdot\nabla (\rho-\omega)(\rho-\omega)dx\\
&+\int_{\mathbb{T}^d}(-\Delta)^{\frac{\alpha}{2}}\rho(\rho-\omega)dx
+\int_{\mathbb{T}^d}\nabla\cdot(\rho B(\rho))(\rho-\omega)dx=0.
\end{aligned}
\end{equation}
Due to the $u$ is incompressible, we easily get
\begin{equation}\label{eq:3.15}
\int_{\mathbb{T}^d}u\cdot\nabla (\rho-\omega)(\rho-\omega)dx=0.
\end{equation}
And the third term of the left hand side of (\ref{eq:3.14}) be estimated as
$$
\int_{\mathbb{T}^d}(-\Delta)^{\frac{\alpha}{2}}\rho(\rho-\omega)dx
=\int_{\mathbb{T}^d}(-\Delta)^{\frac{\alpha}{2}}\rho\rho dx-\int_{\mathbb{T}^d}(-\Delta)^{\frac{\alpha}{2}}\rho\omega dx,
$$
then we deduce by Lemma \ref{lem:2.1}  and H\"{o}lder inequality that
$$
\int_{\mathbb{T}^d}(-\Delta)^{\frac{\alpha}{2}}\rho\rho=\|\rho\|_{\dot{H}^{\frac{\alpha}{2}}}^2,
$$
and
$$
\int_{\mathbb{T}^d}(-\Delta)^{\frac{\alpha}{2}}\rho\omega dx=\int_{\mathbb{T}^d}(-\Delta)^{\frac{\alpha}{4}}\rho(-\Delta)^{\frac{\alpha}{4}}\omega dx\leq
\|\rho\|_{\dot{H}^{\frac{\alpha}{2}}}\|\omega\|_{\dot{H}^{\frac{\alpha}{2}}},
$$
so we  get
\begin{equation}\label{eq:3.16}
\int_{\mathbb{T}^d}(-\Delta)^{\frac{\alpha}{2}}\rho(\rho-\omega)dx
\geq\|\rho\|_{\dot{H}^{\frac{\alpha}{2}}}^2
-\|\rho\|_{\dot{H}^{\frac{\alpha}{2}}}\|\omega\|_{\dot{H}^{\frac{\alpha}{2}}}.
\end{equation}
The fourth term of the left hand side of (\ref{eq:3.14}) be estimate as
\begin{equation}\label{eq:3.17}
\begin{aligned}
\int_{\mathbb{T}^d}\nabla\cdot(\rho B(\rho))(\rho-\omega)dx&=\int_{\mathbb{T}^d}\nabla\cdot(\rho \nabla(-\Delta)^{-1}\rho)(\rho-\omega)dx\\
&=-\frac{1}{2}\int_{\mathbb{T}^d}\rho^3dx-\int_{\mathbb{T}^d}\nabla\cdot(\rho \nabla(-\Delta)^{-1}\rho)\omega dx.
\end{aligned}
\end{equation}
For the first term of the right hand side of (\ref{eq:3.17}), we get
\begin{equation}\label{eq:3.18}
\int_{\mathbb{T}^d}\rho^3dx\leq \|\rho\|_{L^\infty}\|\rho\|_{L^2}^2,
\end{equation}
and for the second term of the right hand side of (\ref{eq:3.17}), by H\"{o}lder inequality and Poincar\'{e} inequality, we obtain that
\begin{equation}\label{eq:3.19}
\begin{aligned}
\int_{\mathbb{T}^d}\nabla\cdot(\rho \nabla(-\Delta)^{-1}\rho)\omega dx&\leq
\|\nabla\cdot(\rho \nabla(-\Delta)^{-1}\rho)\|_{L^2}\|\omega\|_{L^2}\\
&\leq C(\|\nabla\rho\cdot(\nabla(-\Delta)^{-1}\rho)\|_{L^2}
+\|\rho\|_{L^\infty}\|\rho\|_{L^2})\|\omega\|_{L^2}\\
&\leq C(\|\nabla\rho\|_{L^4}\|\rho\|_{L^4}+\|\rho\|_{L^\infty}\|\rho\|_{L^2})\|\omega\|_{L^2}.
\end{aligned}
\end{equation}
Therefore, we deduce by (\ref{eq:3.17}), (\ref{eq:3.18}) and (\ref{eq:3.19}) that
\begin{equation}\label{eq:3.20}
\int_{\mathbb{T}^d}\nabla\cdot(\rho B(\rho))(\rho-\omega)dx\leq
\frac{1}{2}\|\rho\|_{L^\infty}\|\rho\|^2_{L^2}+
C(\|\nabla\rho\|_{L^4}\|\rho\|_{L^4}+\|\rho\|_{L^\infty}\|\rho\|_{L^2})\|\omega\|_{L^2}.
\end{equation}
Combing (\ref{eq:3.14}), (\ref{eq:3.15}), (\ref{eq:3.16}) and (\ref{eq:3.20}), one has
\begin{equation}\label{eq:3.21}
\begin{aligned}
\frac{1}{2}\frac{d}{dt}\|\rho-\omega\|_{L^2}^2&\leq-\|\rho\|_{\dot{H}^{\frac{\alpha}{2}}}^2+
\|\rho\|_{\dot{H}^{\frac{\alpha}{2}}}\|\omega\|_{\dot{H}^{\frac{\alpha}{2}}}+
\frac{1}{2}\|\rho\|_{L^\infty}\|\rho\|^2_{L^2}\\
 &\qquad +C(\|\nabla\rho\|_{L^4}\|\rho\|_{L^4}+\|\rho\|_{L^\infty}\|\rho\|_{L^2})\|\omega\|_{L^2}.
\end{aligned}
\end{equation}
For the the second of the right hand side of (\ref{eq:3.21}), by Young's inequality yield that
\begin{equation}\label{eq:3.22}
\|\rho\|_{\dot{H}^{\frac{\alpha}{2}}}\|\omega\|_{\dot{H}^{\frac{\alpha}{2}}}
\leq\frac{1}{2}\|\rho\|_{\dot{H}^{\frac{\alpha}{2}}}^2+\frac{1}{2}\|\omega\|_{\dot{H}^{\frac{\alpha}{2}}}^2,
\end{equation}
thus, we deduce by  (\ref{eq:3.21}) and (\ref{eq:3.22}) that
$$
\begin{aligned}
\frac{1}{2}\frac{d}{dt}\|\rho-\omega\|_{L^2}^2&\leq-\frac{1}{2}\|\rho\|_{\dot{H}^{\frac{\alpha}{2}}}^2+
\frac{1}{2}\|\omega\|^2_{\dot{H}^{\frac{\alpha}{2}}}+
\frac{1}{2}\|\rho\|_{L^\infty}\|\rho\|^2_{L^2}\\
 &\qquad +C(\|\nabla\rho\|_{L^4}\|\rho\|_{L^4}+\|\rho\|_{L^\infty}\|\rho\|_{L^2})\|\omega\|_{L^2}.
\end{aligned}
$$
By Lemma \ref{lem:2.5}, we have
$$
\begin{aligned}
\frac{d}{dt}\|\rho-\omega\|_{L^2}^2&\leq-\|\rho\|_{\dot{H}^{\frac{\alpha}{2}}}^2+
F^2(t)\|\rho_0\|^2_{\dot{H}^{\frac{\alpha}{2}}}+
\|\rho\|_{L^\infty}\|\rho\|^2_{L^2}\\
 &\qquad +C(\|\nabla\rho\|_{L^4}\|\rho\|_{L^4}+\|\rho\|_{L^\infty}\|\rho\|_{L^2})\|\rho_0\|_{L^2}.
\end{aligned}
$$
This completes the proof of Lemma \ref{lem:3.2}.
\end{proof}

\vskip .1in
Now, we establish global $L^\infty$ estimate of the solution to equation (\ref{eq:1.1}) in the case of weakly mixing.

\vskip .1in
\begin{proposition}[Global $L^\infty$ estimate]\label{prop:3.3}
Let $0<\alpha<2,\beta=d, d\geq2$, suppose $\rho (t,x)$ is the solution of equation (\ref{eq:1.1}) with initial data $\rho_0\geq0, \rho_0\in H^3(\mathbb{T}^d)\cap L^{\infty}(\mathbb{T}^d)$. Then there exist weakly mixing $u$ and a positive constant $C_{L^\infty}$,  such that
$$
\|\rho(t,\cdot)\|_{L^\infty}\leq C_{L^\infty},\quad  t\in [0,+\infty].
$$
\end{proposition}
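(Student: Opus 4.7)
The plan is to iterate a local scheme on successive intervals of length $\tau_1$ (furnished by Lemma \ref{lem:3.1}), choosing the amplitude $A$ in the advection $Au\cdot\nabla$ large enough that mixing converts the local $L^\infty$ bound into a small $L^p$ bound for some $p>d/\alpha$, which via the nonlinear maximum principle (\ref{eq:3.4}) immediately restores the original $L^\infty$ bound at time $\tau_1$. Since $\tau_1$ depends only on $C_\infty$ and $\|\rho_0\|_{L^1}$, the scheme will repeat verbatim on $[\tau_1,2\tau_1]$, $[2\tau_1,3\tau_1],\dots$, yielding the advertised uniform bound $C_{L^\infty}$ on $[0,\infty)$.

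On the first interval, Lemma \ref{lem:3.1} gives $\|\rho(t,\cdot)\|_{L^\infty}\le 2C_\infty$ and a crude $L^2$ control. The crucial step is to show that $\|\rho(\tau_1,\cdot)-\overline{\rho}\|_{L^2}$ can be made arbitrarily small by enlarging $A$. I will split into two cases guided by the remark following Lemma \ref{lem:3.1}: either $\int_0^{\tau_1}\|\rho\|_{\dot H^{\alpha/2}}^2\,ds$ is already large enough that the differential inequality (\ref{eq:3.12}) itself drives $\|\rho-\overline{\rho}\|_{L^2}$ below $\delta$, or else this integral is small, in which case Lemma \ref{lem:3.2} forces $\rho(\tau_1,\cdot)$ to be $L^2$-close to the pure-transport solution $\omega(\tau_1,\cdot)$ of $\rho_0$. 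Since $u$ is weakly mixing and hence relaxation enhancing, the Constantin--Kiselev--Ryzhik--Zlato\v{s} framework together with Lemma \ref{lem:2.5} ensures that, after rescaling $u\to Au$ for $A$ sufficiently large, $\|\omega(\tau_1,\cdot)-\overline{\rho}\|_{L^2}<\delta$ for any prescribed $\delta>0$, so in both cases $\|\rho(\tau_1,\cdot)-\overline{\rho}\|_{L^2}\le C\delta$.

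Interpolating between $L^2$ and $L^\infty$, for any fixed $p>d/\alpha$,
$$
\|\rho(\tau_1,\cdot)-\overline{\rho}\|_{L^p}\le\|\rho(\tau_1,\cdot)-\overline{\rho}\|_{L^2}^{2/p}\,\|\rho(\tau_1,\cdot)-\overline{\rho}\|_{L^\infty}^{1-2/p}\le (C\delta)^{2/p}(3C_\infty)^{1-2/p},
$$
and since $\overline{\rho}\le\|\rho_0\|_{L^1}/|\mathbb{T}^d|$, the same smallness transfers to $\|\rho(\tau_1,\cdot)\|_{L^p}$. The nonlinear maximum principle (\ref{eq:3.4}) then reads
$$
\frac{d}{dt}\widetilde{\rho}\le\widetilde{\rho}^2-C\,\|\rho\|_{L^p}^{-p\alpha/d}\,\widetilde{\rho}^{\,1+p\alpha/d},
$$
and because $1+p\alpha/d>2$, the dissipative term dominates whenever $\widetilde{\rho}\gtrsim\|\rho\|_{L^p}^{p\alpha/(p\alpha-d)}$. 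Choosing $\delta$ small enough that this threshold lies below $C_\infty$ yields $\|\rho(\tau_1,\cdot)\|_{L^\infty}\le C_\infty$, closing the induction step.

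The main obstacle is the second case of the splitting: the right-hand side of Lemma \ref{lem:3.2} contains a term $C\|\nabla\rho\|_{L^4}\|\rho\|_{L^4}\|\rho_0\|_{L^2}$, which demands $W^{1,4}$-control on $\rho$ over $[0,\tau_1]$. This control has to be extracted from Proposition \ref{prop:3.1} applied on this short interval with $\|\rho\|_{L^\infty}\le 2C_\infty$, and one must verify that the resulting $H^3$-constant depends at worst polynomially on $C_\infty$ so that a single choice of $A$ (depending only on initial data and $\tau_1$) suffices for every iterate. Once this quantitative link between the local $L^\infty$ bound and the higher-order norms appearing inside Lemma \ref{lem:3.2} is in place, the bootstrap runs to infinity and Proposition \ref{prop:3.3} follows.
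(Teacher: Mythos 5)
Your overall skeleton (local $L^\infty$ and $L^2$ bounds, comparison with the pure transport solution, mixing, nonlinear maximum principle, iteration) matches the paper's, but the central mechanism you invoke is wrong. You claim that for $A$ large the transported profile satisfies $\|\omega(\tau_1,\cdot)-\overline{\rho}\|_{L^2}<\delta$. This is false: $\omega$ solves the pure transport equation $\partial_t\omega+Au\cdot\nabla\omega=0$ with an incompressible $u$, so $\|\omega(t,\cdot)-\overline{\rho}\|_{L^2}=\|\rho_0-\overline{\rho}\|_{L^2}$ exactly, for every $t$ and every $A$. The relaxation--enhancement statement $\|\phi^A(\tau)\|_{L^2}\le\delta\|\phi_0\|_{L^2}$ from Constantin--Kiselev--Ryzhik--Zlato\v{s} is about the advection--\emph{diffusion} equation (\ref{eq:1.5}), not about transport alone, and Lemma \ref{lem:2.5} only bounds the \emph{growth} of $\|\omega\|_{\dot H^{\alpha/2}}$. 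What weak mixing actually provides is the RAGE-type statement (Lemma \ref{lemma:3.4}): the \emph{low-frequency projection} $\|P_N(\omega^A-\overline{\rho})\|_{L^2}$ is small in time average. The paper's proof transfers this to $\rho^A$ via the approximation lemma, deduces that $(I-P_N)(\rho^A-\overline{\rho})$ carries most of the $L^2$ mass, hence that $\frac{1}{\tau}\int_0^\tau\|\rho^A\|_{\dot H^{\alpha/2}}^2\,dt\gtrsim\lambda_N^{\alpha/2}(B_0^2-\overline{\rho}^2)$, and only then feeds this into the energy inequality (\ref{eq:3.12}) to obtain decay of $\|\rho^A-\overline{\rho}\|_{L^2}$ by the factor $(1-\tfrac12\lambda_N^{\alpha/2}\tau)$. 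Your dichotomy short-circuits this chain and lands on a statement that cannot hold.

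A second, related problem is the time scale of your iteration. Lemma \ref{lem:3.2} keeps $\rho^A$ close to $\omega^A$ only over intervals of length $\tau=T_c/A$, because its right-hand side consists of terms that are merely \emph{bounded} (not small); their time integral is small only because $\tau\to0$ as $A\to\infty$ (e.g.\ $\int_0^{\tau}F(At)^2dt=\frac1A\int_0^{T_c}F(s)^2ds$). Over a fixed interval $[0,\tau_1]$ as you propose, $\int_0^{\tau_1}F(At)^2\,dt=\frac1A\int_0^{A\tau_1}F(s)^2\,ds$ need not be small since $F$ can grow exponentially, and the remaining $O(\tau_1)$ contributions do not vanish either. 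This is why the paper performs the $L^2$-decay step on each of the $k=\lfloor A\tau_1/(2T_c)\rfloor$ short subintervals of $[0,\tau_1]$ and compounds the decay factor $k$ times to reach the threshold $B_1$, rather than running the comparison once over the whole window. Your final steps (interpolation to $L^p$ with $p>d/\alpha$, then the nonlinear maximum principle (\ref{eq:3.4}) to recover the $L^\infty$ bound, then restarting) do agree with the paper, and your concern about the $\|\nabla\rho\|_{L^4}$ term is resolved there by the local $H^3$ bound from Proposition \ref{prop:3.1} together with Gagliardo--Nirenberg; but as written the core of your argument does not close.
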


\vskip .1in
Before starting the proof of Proposition \ref{prop:3.3}, we need one auxiliary result (see \cite{Constantin.2008,Cusimano.2018,Kiselev.2016}). On $\mathbb{T}^d$, we denote by $e_1, e_2,\cdots,e_n,\cdots$ is orthogonal eigenvectors corresponding to the eigenvalue of $(-\Delta)^{\frac{\alpha}{2}}: 0\leq\lambda_1^{\frac{\alpha}{2}}\leq\lambda_2^{\frac{\alpha}{2}}\leq\cdots\leq \lambda_n^{\frac{\alpha}{2}}\leq\cdots$. Let us denote by $P_N$ the
orthogonal projection on the subspace spanned by the first $N$ eigenvectors $e_1, e_2,\cdots,e_N$ and
$$
S=\{\phi \in L^2\big| \|\phi\|_{L^2}=1\}.
$$
The following lemma is an extension of well-know RAGE theorem (see \cite{Constantin.2008,Cycon.1987, Kiselev.2016}).

\begin{lemma}\label{lemma:3.4}
Let $U$ be a unitary operator with purely continuous spectrum define on $L^2(\mathbb{T}^d)$. Let $K\subset S$ be a compact set. Then for every $N$ and $\sigma>0$, there exists $T_c=T(N, \sigma, K, U)$ such that for all $T\geq T_c$ and every $\phi\in K$, we have
$$
\frac{1}{T}\int_0^T \|P_N U^t \phi\|_{L^2}^2dt\leq\sigma.
$$
\end{lemma}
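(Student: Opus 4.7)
The plan is to prove this as a uniform-in-$K$ extension of the classical RAGE theorem. For a fixed $\phi$, the standard RAGE theorem applied to $U$ and the finite-rank projection $P_N$ already yields
\[
f_T(\phi) := \frac{1}{T}\int_0^T \|P_N U^t\phi\|_{L^2}^2\, dt \longrightarrow 0 \quad\text{as } T\to\infty.
\]
The only extra ingredient is to upgrade pointwise convergence to convergence uniform in $\phi\in K$, and this will come from a finite-cover argument exploiting the equicontinuity (in $\phi$) of the family $\{f_T\}_{T>0}$.

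\textbf{Step 1: Pointwise RAGE.} First I would expand
\[
\|P_N U^t\phi\|_{L^2}^2 \;=\; \sum_{j=1}^N |\langle e_j, U^t\phi\rangle|^2,
\]
so it suffices to show that for each fixed $j\le N$ and each $\phi\in L^2$,
\[
\frac{1}{T}\int_0^T |\langle e_j, U^t\phi\rangle|^2\, dt \longrightarrow 0.
\]
Using the spectral resolution $U=\int_{\mathbb{T}} e^{i\theta}\, dE(\theta)$, the function $t\mapsto \langle e_j, U^t\phi\rangle$ is the Fourier transform of the complex measure $d\mu_{e_j,\phi}(\theta)=d\langle e_j, E(\theta)\phi\rangle$. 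Since $U$ has purely continuous spectrum, this measure is continuous (atomless), and Wiener's lemma gives vanishing Ces\`aro mean of its squared Fourier transform. Summing over $j=1,\dots,N$ yields $f_T(\phi)\to 0$.

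\textbf{Step 2: Equicontinuity of $f_T$ and finite cover.} Next I would write
\[
f_T(\phi) \;=\; \Bigl\langle \phi,\; A_T\phi \Bigr\rangle, \qquad A_T := \frac{1}{T}\int_0^T (U^t)^{*}P_N U^t\, dt,
\]
a positive self-adjoint operator with $\|A_T\|\le 1$ uniformly in $T$ (since $U$ is unitary and $\|P_N\|\le 1$). Hence for $\phi,\psi$ in the unit sphere $S$,
\[
|f_T(\phi)-f_T(\psi)| \;\le\; 2\,\|\phi-\psi\|_{L^2},
\]
i.e.\ $\{f_T\}_T$ is uniformly Lipschitz on $S$. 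Given $\sigma>0$, pick $\delta=\sigma/4$; by compactness of $K\subset S$ choose a finite cover $K\subset \bigcup_{i=1}^{M} B(\phi_i,\delta)$ with centers $\phi_i\in K$. By Step 1, for each $i$ there exists $T_i$ with $f_T(\phi_i)\le \sigma/2$ for all $T\ge T_i$. Setting $T_c := \max_{1\le i\le M} T_i$ and using the Lipschitz bound, for every $\phi\in K$ and $T\ge T_c$ I pick $i$ with $\|\phi-\phi_i\|<\delta$ and estimate
\[
f_T(\phi) \;\le\; f_T(\phi_i) + 2\delta \;\le\; \tfrac{\sigma}{2}+\tfrac{\sigma}{2} \;=\; \sigma,
\]
which is exactly the claim.

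\textbf{Main obstacle.} The only genuinely non-trivial input is the pointwise RAGE statement (Step 1), which relies on the spectral theorem together with Wiener's lemma on the Ces\`aro averages of $|\widehat{\mu}(t)|^2$ for continuous measures $\mu$; since $P_N$ is finite rank this reduces to finitely many such scalar statements. Once that is in hand, the uniform-in-$K$ upgrade (Step 2) is a routine equicontinuity/compactness argument, and the key observation making it work is simply that all $A_T$ lie in the unit ball of bounded operators, so the family $\{f_T\}$ is uniformly Lipschitz independently of $T$. No further structure of the flow or of $L^2(\mathbb{T}^d)$ is needed.
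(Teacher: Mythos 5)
Your proof is correct and is essentially the standard argument: the paper itself gives no proof of Lemma \ref{lemma:3.4}, deferring to \cite{Constantin.2008,Cycon.1987,Kiselev.2016}, and the argument found there is exactly your two-step scheme --- pointwise decay of $\frac{1}{T}\int_0^T\|P_NU^t\phi\|_{L^2}^2\,dt$ via the spectral theorem and Wiener's lemma for continuous (spectral) measures, upgraded to uniformity on $K$ by the uniform Lipschitz bound $|f_T(\phi)-f_T(\psi)|\le 2\|\phi-\psi\|_{L^2}$ on the unit sphere and a finite cover. The only cosmetic point is that for continuous time one should phrase the spectral resolution through the self-adjoint generator of the group $U^t$ (spectral measure on $\mathbb{R}$) rather than writing $U=\int_{\mathbb{T}}e^{i\theta}\,dE(\theta)$, and note that the off-diagonal measures $\mu_{e_j,\phi}$ are atomless by Cauchy--Schwarz against the continuous diagonal measures; neither affects the argument.
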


\begin{remark}
We denote $\chi=\chi(|x|\leq R)$ is cutoff function, if $\chi$ instead of $P_N$, then the RAGE theorem tells us that any state in continuous spectrum space will `` infinitely often leave" the ball of radius $R$. This is indeed what we expect physically.

\end{remark}

\vskip .1in
Let us consider the equation
\begin{equation}\label{eq:3.23}
\begin{cases}
\partial_t\rho^A+Au\cdot \nabla \rho^A+(-\Delta)^{\frac{\alpha}{2}}\rho^A+\nabla\cdot(\rho^A B(\rho^A))=0,\qquad  &t>0, x\in \mathbb{T}^d\\
\rho^A(0,x)=\rho_0(x),&x\in\mathbb{T}^d.
\end{cases}
\end{equation}
Here $A$ is the coupling constant regulating strength of the fluid flow that we assume to be large and $A u$ is weakly mixing.

\vskip .1in
We are ready to give the proof of the Proposition \ref{prop:3.3}.

\begin{proof}[Proof of the Proposition \ref{prop:3.3}]
Due to $\rho_0\in H^3(\mathbb{T}^d)\cap L^{\infty}(\mathbb{T}^d)$, without loss of generality, we can assume that
\begin{equation}\label{eq:3.24}
\|\rho_0\|_{L^2}=B_0,\quad  \|\rho_0-\overline{\rho}\|_{L^p}\leq D,\quad \|\rho_0\|_{L^\infty}\leq C_{\infty},
\end{equation}
where $p>\frac{d}{\alpha}$, $B_0, D, C_{\infty}$ are positive constant, and we denote
$$
B_1=\min \left\{(B_0^2-\overline{\rho}^2)^{\frac{1}{2}},\left( \frac{D}{(2C_\infty+\overline{\rho})^{1-\frac{2}{p}}}\right)^{\frac{p}{2}}\right\}.
$$
Due to $\lambda_n^{\frac{\alpha}{2}}$ the eigenvalues of $(-\Delta)^{\frac{\alpha}{2}}$ on $\mathbb{T}^d$, and
$$
\lambda_n^{\frac{\alpha}{2}}\rightarrow\infty,\quad n\rightarrow\infty,
$$
we choose $N$, such that
\begin{equation}\label{eq:3.25}
\begin{aligned}
\lambda_N^{\frac{\alpha}{2}}\geq \max \left\{\frac{2400}{23}(C_\infty+\overline{\rho}), \left(1-\frac{B_1^2}{B_0^2-\overline{\rho}^2}\right)\frac{1}{\tau_1}+3(C_\infty+\overline{\rho}), \frac{2}{\tau_1}\ln \frac{B_0^2-\overline{\rho}^2}{B_1^2}\right\}.
\end{aligned}
\end{equation}
Define the compact set $K\subset S$ by
\begin{equation}\label{eq:3.26}
K=\{\phi\in S\big|\|\phi\|_{\dot{H}^{\frac{\alpha}{2}}}^{2}\leq \lambda_{N}^{\frac{\alpha}{2}}\}.
\end{equation}
Let $U^t$ is the unitary operator associated with weakly mixing flow $u$ in the Definition \ref{def:2.4}.
 Fix $\sigma=\frac{1}{100}$, then we get $T_c=T_c(N, \sigma, K, U)$, which is the time provide by Lemma \ref{lemma:3.4}. We proceed to impose the first condition on $A_0=A_0(T_c, \rho_0, \tau_1)$. For any $A\geq A_0$, we define $\tau$ as follows
$$
\tau=\frac{T_c}{A}\leq \tau_1.
$$
Due to $\|\rho_0\|_{L^2}=B_0$,
then for the solution $\rho^A(t,x)$ of equation (\ref{eq:3.23}), we deduce by Lemma \ref{lem:3.1} and (\ref{eq:3.24}) that
\begin{equation}\label{eq:3.27}
\|\rho^A(t,\cdot)-\overline{\rho}\|_{L^2}\leq 2(B_0^2-\overline{\rho}^2)^{\frac{1}{2}},\quad \|\rho^A(t,\cdot)\|_{L^\infty}\leq 2C_{\infty},\quad 0\leq t \leq \tau_1.
\end{equation}
Next, we consider the equation
$$
\partial_t \omega^A+Au\cdot\nabla \omega^A=0,\quad \omega^A(0,x)=\rho_0(x),
$$
according to the definition of $U^{t}$, one has
$$
\omega^A(t,x)-\overline{\rho}=U^{At}(\rho_0(x)-\overline{\rho}).
$$
Let $(\rho_0-\overline{\rho})/\|\rho_0-\overline{\rho}\|_{L^2}\in K$, by the Lemma \ref{lemma:3.4} and the definition of $\tau$, we deduce that
\begin{equation}\label{eq:3.28}
\begin{aligned}
 \frac{1}{\tau}\int_{0}^{\tau}\|P_N(\omega^A-\overline{\rho})\|_{L^2}^2dt&= \frac{1}{\tau}\int_{0}^{\tau}\|P_N U^{At}(\rho_0-\overline{\rho})\|_{L^2}^2dt \\
 &=\frac{\|\rho_0-\overline{\rho}\|_{L^2}^2}{\tau}\int_{0}^{\tau}\|P_N U^{At}\frac{(\rho_0-\overline{\rho})}{\|\rho_0-\overline{\rho}\|_{L^2}}\|_{L^2}^2dt\\
 &=\frac{\|\rho_0-\overline{\rho}\|_{L^2}^2}{A\tau}\int_{0}^{\tau}\|P_N U^{At}\frac{(\rho_0-\overline{\rho})}{\|\rho_0-\overline{\rho}\|_{L^2}}\|_{L^2}^2dAt\\
 &=\frac{\|\rho_0-\overline{\rho}\|_{L^2}^2}{T_c}\int_{0}^{T_c}\|P_N U^{s}\frac{(\rho_0-\overline{\rho})}{\|\rho_0-\overline{\rho}\|_{L^2}}\|_{L^2}^2ds\\
 &\leq \sigma \|\rho_0-\overline{\rho}\|_{L^2}^2\leq \frac{1}{100}(B_0^2-\overline{\rho}^2).
\end{aligned}
\end{equation}
Since $(\rho_0-\overline{\rho})/\|\rho_0-\overline{\rho}\|_{L^2}\in K$, by the definition of $K$ in (\ref{eq:3.26}), we have
\begin{equation}\label{eq:3.29}
\|\rho_0\|_{\dot{H}^{\frac{\alpha}{2}}}^{2}\leq \lambda_{N}^{\frac{\alpha}{2}}\|\rho_0-\overline{\rho}\|_{L^2}^2
\leq \lambda_{N}^{\frac{\alpha}{2}} (B_0^2-\overline{\rho}^2).
\end{equation}
For any fixed $p^*\in [1,\infty)$, according to (\ref{eq:3.46}) and (\ref{eq:3.27}), there exists a positive constant $C=C(p^*)$, such that
$$
\|\rho^A(t,\cdot)\|_{L^{p^*}}\leq C,\quad  t\in[0,\tau_1],
$$
and we deduce by (\ref{eq:3.53}) and (\ref{eq:3.27}) that $\|\rho^A(t,\cdot)\|_{H^3}$ is bound for any $t\in [0, \tau_1]$. Namely, there is a positive constant $ C^*_{H^3}$, such that
$$
\|\rho^A(t,\cdot)\|_{H^3}\leq C^*_{H^3}, \quad t\in [0, \tau_1],
$$
 by Gagliardo-Nirenberg inequality, we obtain
\begin{equation}\label{eq:3.41+}
\|\nabla \rho^A\|_{L^4}\leq C\|\rho^A\|^{1-\theta_0}_{L^{p^*}}\|\rho^A\|^{\theta_0}_{\dot{H}^2}\leq C_4,\quad t\in [0, \tau_1],
\end{equation}
where
$$
\theta_0=\frac{\frac{1}{4}-\frac{1}{d}-\frac{1}{p^*}}{\frac{1}{2}-\frac{2}{d}-\frac{1}{p^*}}.
$$
Combing (\ref{eq:3.24}), (\ref{eq:3.27}), (\ref{eq:3.29}), (\ref{eq:3.41+}) and Lemma \ref{lem:3.2}, we deduce that for $0<t\leq \tau_1$,
\begin{equation}\label{eq:3.30}
\begin{aligned}
\frac{d}{dt}\|\rho^A-\omega^A\|_{L^2}^2\leq &\lambda_{N}^{\frac{\alpha}{2}} F(At)^2(B_0^2-\overline{\rho}^2)+8C_\infty B_0^2\\
& +C\left(C_4B_0+C_4C_\infty+C_\infty B_0\right)B_0\\
&\leq\lambda_{N}^{\frac{\alpha}{2}} F(At)^2(B_0^2-\overline{\rho}^2)+C\left(C_4B_0+C_4C_\infty+C_\infty B_0\right)B_0.
\end{aligned}
\end{equation}
Due to $F(t)$ is a locally bounded function, we chose $A_1\geq A_0$, when $A\geq A_1$, such that
$$
\begin{aligned}
&\int_0^{\tau}\lambda_{N}^{\frac{\alpha}{2}} F(At)^2(B_0^2-\overline{\rho}^2)+C\left(C_4B_0+C_4C_\infty+C_\infty B_0\right)B_0 dt\\
&\leq\frac{\lambda_{N}^{\frac{\alpha}{2}} (B_0^2-\overline{\rho}^2)}{A}\int_0^{T_c}F(t)^2dt
+ C\left(C_4B_0+C_4C_\infty+C_\infty B_0\right)B_0\tau\\
&\leq \frac{B_0^2-\overline{\rho}^2}{100}.
\end{aligned}
$$
Therefore, we integrate from $0$ to $t$ in both sides of (\ref{eq:3.30}), where $0\leq t\leq \tau$, we obtain
\begin{equation}\label{eq:3.31}
\|\rho^A(t,\cdot)-\omega^A(t,\cdot)\|_{L^2}^2\leq  \frac{B_0^2-\overline{\rho}^2}{100},
\end{equation}
due to $\|\omega^A(t,\cdot)-\overline{\rho}\|_{L^2}^2=\|\rho_0-\overline{\rho}\|_{L^2}^2=B_0^2-\overline{\rho}^2$, we deduce that
\begin{equation}\label{eq:3.32}
\frac{81}{100}(B_0^2-\overline{\rho}^2)\leq \|\rho^A(t,\cdot)-\overline{\rho}\|_{L^2}^2\leq \frac{121}{100}(B_0^2-\overline{\rho}^2), \quad 0\leq t\leq \tau.
\end{equation}
Furthermore, by the estimates (\ref{eq:3.28}) and (\ref{eq:3.31}), we get
\begin{equation}\label{eq:3.33}
\begin{aligned}
\frac{1}{\tau}\int_{0}^{\tau}\|P_N(\rho^A(t,\cdot)-\overline{\rho})\|_{L^2}^2dt
& \leq\frac{2}{\tau}\int_{0}^{\tau}\|P_N(\omega^A(t,\cdot)-\overline{\rho})\|_{L^2}^2dt\\
&\quad +\frac{2}{\tau}\int_{0}^{\tau}\|P_N(\rho^A(t,\cdot)-\omega^A(t,\cdot))\|_{L^2}^2dt\\
&\leq\frac{B_0^2-\overline{\rho}^2}{25}.
\end{aligned}
\end{equation}
For the $\|\rho^A(t,\cdot)\|_{\dot{H}^{\frac{\alpha}{2}}}^2$, we have
$$
\begin{aligned}
 \|\rho^A(t,\cdot)\|_{\dot{H}^{\frac{\alpha}{2}}}^2&=\|\rho^A(t,\cdot)
 -\overline{\rho}\|_{\dot{H}^{\frac{\alpha}{2}}}^2\\
 &\geq
 \|(I-P_N)(\rho^A(t,\cdot)
 -\overline{\rho})\|_{\dot{H}^{\frac{\alpha}{2}}}^2\\
 &=\|(-\Delta)^{\frac{\alpha}{4}}(I-P_N)(\rho^A(t,\cdot)
 -\overline{\rho})\|_{L^2}^2\\
 &\geq \lambda_N^{\frac{\alpha}{2}}\|(I-P_N)(\rho^A(t,\cdot)
 -\overline{\rho})\|_{L^2}^2,
\end{aligned}
$$
and
$$
\|(I-P_N)(\rho^A(t,\cdot)
 -\overline{\rho})\|_{L^2}^2\geq \frac{1}{2}\|(\rho^A(t,\cdot)
 -\overline{\rho})\|_{L^2}^2-\|P_N(\rho^A(t,\cdot)
 -\overline{\rho})\|_{L^2}^2.
$$
Thus, we deduce by (\ref{eq:3.32}) and (\ref{eq:3.33}) that
\begin{equation}\label{eq:3.34}
\begin{aligned}
\frac{1}{\tau}\int_{0}^{\tau}\|\rho^A(t,\cdot)\|_{\dot{H}^{\frac{\alpha}{2}}}^2dt&
\geq  \frac{1}{\tau}\int_{0}^{\tau}\lambda_N^{\frac{\alpha}{2}}\|(I-P_N)(\rho^A(t,\cdot)
 -\overline{\rho})\|_{L^2}^2dt\\
&\geq \frac{\lambda_N^{\frac{\alpha}{2}}}{2\tau}\int_{0}^{\tau}\|(\rho^A(t,\cdot)
 -\overline{\rho})\|_{L^2}^2dt\\
&\quad -\frac{\lambda_N^{\frac{\alpha}{2}}}{\tau}\int_{0}^{\tau}\|P_N(\rho^A(t,\cdot)
 -\overline{\rho})\|_{L^2}^2dt\\
&\geq \frac{81}{200}\lambda_N^{\frac{\alpha}{2}}(B_0^2-\overline{\rho}^2)
-\frac{1}{25}\lambda_N^{\frac{\alpha}{2}}(B_0^2-\overline{\rho}^2)\\
&\geq\frac{73}{200}\lambda_N^{\frac{\alpha}{2}}(B_0^2-\overline{\rho}^2).
\end{aligned}
\end{equation}
According to (\ref{eq:3.12}), we can obtain
\begin{equation}\label{eq:3.35}
\frac{d}{dt}\|\rho^A-\overline{\rho}\|_{L^2}^2\leq-2\|\rho^A\|_{\dot{H}^{\frac{\alpha}{2}}}^2
+6(C_\infty+\overline{\rho})\|\rho^A-\overline{\rho}\|_{L^2}^2,
\end{equation}
we integrate from $0$ to $\tau$ in both sides of (\ref{eq:3.35}), we get
$$
\|\rho^A(\tau,\cdot)-\overline{\rho}\|_{L^2}^2\leq
-2\int_{0}^{\tau}\|\rho^A\|_{\dot{H}^{\frac{\alpha}{2}}}^2dt
+\int_{0}^{\tau}6(C_\infty+\overline{\rho})\|\rho^A-\overline{\rho}\|_{L^2}^2dt+\|\rho_0-\overline{\rho}\|_{L^2}^2.
$$
Combing (\ref{eq:3.24}), (\ref{eq:3.25}), (\ref{eq:3.27}) and (\ref{eq:3.34}), we have
$$
\begin{aligned}
\|\rho^A(\tau,\cdot)-\overline{\rho}\|_{L^2}^2&\leq (B_0^2-\overline{\rho}^2)-2\tau\left(\frac{1}{\tau}\int_{0}^{\tau}\|\rho^A\|_{\dot{H}^
{\frac{\alpha}{2}}}^2dt\right)+\int_{0}^{\tau}24(C_\infty+\overline{\rho})(B_0^2-\overline{\rho}^2)dt\\
&\leq-\frac{73}{100}\lambda_N^{\frac{\alpha}{2}}(B_0^2-\overline{\rho}^2)\tau
+24(C_\infty+\overline{\rho})(B_0^2-\overline{\rho}^2)\tau+(B_0^2-\overline{\rho}^2)\\
&\leq \left(-\frac{73}{100}\lambda_N^{\frac{\alpha}{2}}+
24(C_\infty+\overline{\rho})\right)(B_0^2-\overline{\rho}^2)\tau+(B_0^2-\overline{\rho}^2)\\
&\leq(1-\frac{1}{2}\lambda_N^{\frac{\alpha}{2}}\tau)(B_0^2-\overline{\rho}^2).
\end{aligned}
$$
We definite
$$
k= \left\lfloor\frac{A \tau_1}{2T_c}\right\rfloor,
$$
where $\lfloor\cdot\rfloor$  is downward rectification. Then there exists a $A_2> A_1$, if
$$
A\geq A_2,
$$
and repeat the above process $k$ times, we have
\begin{equation}\label{eq:3.36}
\|\rho^A(k\tau,\cdot)-\overline{\rho}\|_{L^2}\leq (1-\frac{1}{2}\lambda_N^{\frac{\alpha}{2}}\tau)^{\frac{k}{2}}(B_0^2-\overline{\rho}^2)^{\frac{1}{2}}\leq B_1.
\end{equation}
By (\ref{eq:3.10}), (\ref{eq:3.36}) and interpolation inequality, we deduce that
\begin{equation}\label{eq:3.37}
\|\rho^A(k\tau,\cdot)-\overline{\rho}\|_{L^p}\leq \|\rho^A-\overline{\rho}\|^{\frac{2}{p}}_{L^2}\|\rho^A-\overline{\rho}\|^{1-\frac{2}{p}}_{L^\infty}
\leq D.
\end{equation}
According to Theorem \ref{thm:2.6} and (\ref{eq:3.37}), then there exists a positive constant $C_{L^p}=C(\rho_0)$, such that
\begin{equation}\label{eq:3.38}
\|\rho^A(t,\cdot)\|_{L^p}\leq C_{L^p},\quad 0\leq t\leq k\tau.
\end{equation}
If we denote
$$
\widetilde{\rho}(t)=\rho^A(t,\overline{x}_t)=\max_{x\in \mathbb{T}^d}\rho^A(t,x),
$$
then by nonlinear maximum principle and  (\ref{eq:3.4}), one has
\begin{equation}\label{eq:3.39}
\frac{d}{dt}\widetilde{\rho}\leq \widetilde{\rho}^2
-C(\alpha, d,p)\frac{\widetilde{\rho}^{1+\frac{p\alpha}{d}}}{\|\rho\|_{L^p}^{\frac{p\alpha}{d}}},
\end{equation}
according to (\ref{eq:3.38}) and (\ref{eq:3.39}), for any $0<t<k\tau$, we have
\begin{equation}\label{eq:3.40}
\frac{d}{dt}\widetilde{\rho}\leq \widetilde{\rho}^2
-C_3\widetilde{\rho}^{1+\frac{p\alpha}{d}},
\end{equation}
where $ C_3=C(\alpha,d,p)/(C_{L^p})^{\frac{p\alpha}{d}}$. We set
 $$
M_0=\max\{x|x^2-C_3x^{1+\frac{p\alpha}{d}}=0\},
$$
then we denote
$$
C_{L^\infty}=\max\{M_0, \|\rho_0\|_{L^\infty}\},
$$
due to $\alpha>\frac{d}{p}$, then
$$
1+\frac{p\alpha}{d}>2.
$$
By solving the differential inequality of (\ref{eq:3.40}), we deduce that
\begin{equation}\label{eq:3.41}
\|\rho^A(t,\cdot)\|_{L^\infty}\leq C_{L^\infty},\quad 0\leq t\leq k\tau.
\end{equation}
For the solution $\rho(t,x)$ of equation (\ref{eq:1.1}), by the same argument with above, we deduce that for any $n\in\mathbb{Z}^+$, one has
$$
\|\rho(n\tau,\cdot)-\overline{\rho}\|_{L^p}\leq D.
$$
Then by the similar to (\ref{eq:3.38}) and (\ref{eq:3.41}), for any $t\geq 0$, we have
$$
\|\rho(t,\cdot)\|_{L^\infty}\leq C_{L^\infty}.
$$
This completes the proof of Proposition \ref{prop:3.3}.
\end{proof}

\begin{remark}
Without loss of general, we can assume $C_\infty=C_{L^\infty}$ for the completeness of proof.
\end{remark}

Let us prove the Theorem \ref{thm:1.1} briefly.
\begin{proof}[The proof of Theorem \ref{thm:1.1}]
According to Proposition \ref{prop:3.3}, we can deduce that for the solution $\rho$ of equation (\ref{eq:1.1}), one has
$$
\|\rho(t,\cdot)\|_{L^\infty}\leq C_{L^\infty}\quad 0\leq t<\infty,
$$
then due to $L^\infty$-criterion, we know that the $\|\rho\|_{H^3}$ is uniform bound. Namely, by $\|\rho\|_{L^2}$ estimate of the solution and solving different inequality (\ref{eq:3.53}), we obtain
$$
\|\rho\|_{H^3}\leq C_{H^3}.
$$
By using standard continuation argument, we have
$$
\rho(t,x)\in C(\mathbb{R}^{+}; H^3(\mathbb{T}^d)).
$$
This completes the proof of Theorem \ref{thm:1.1}.
\end{proof}

\begin{remark}
In fact, for any $k\geq2$, $\rho_0\in H^k(\mathbb{T}^d)\cap L^\infty(\mathbb{T}^d)$, we can get
$$
\rho(t,x)\in C(\mathbb{R}^{+}; H^k(\mathbb{T}^d)).
$$
\end{remark}

\vskip .3in
\section{Proof of Theorem \ref{thm:1.1} ($\beta \in [2,d),d>2$)}

In this section, we consider the generalized Keller-Segel system with fractional diffusion and weakly mixing in the case of $\beta \in [2,d),d>2$. Due to the proof is the similar to Theorem \ref{thm:1.1}, so we only deal with different details.

\subsection{$L^\infty$-criterion }
We get the global classical solution of equation (\ref{eq:1.1}) if $L^\infty$ estimate of the solution is global bound.

\begin{proposition}\label{prop:4.1}
Suppose that $0<\alpha<2,\beta\in [2,d), d>2$, for any initial data $\rho_0\geq0, \rho_0\in H^3(\mathbb{T}^d)\cap L^{\infty}(\mathbb{T}^d)$. Then the following criterion holds: either the location solution to (\ref{eq:1.1}) extends to a global classical solution or there exists $T^*\in (0,\infty)$, such that
$$
\int_0^\tau \|\rho(t,\cdot)\|_{L^\infty}dt \xrightarrow{\tau\rightarrow T^*}\infty.
$$
\end{proposition}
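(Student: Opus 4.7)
The plan is to mirror the proof of Proposition \ref{prop:3.1} and propagate $H^3$ regularity as long as $\|\rho\|_{L^\infty}$ is integrated (equivalently, pointwise bounded) in time. Taking the $L^2$ pairing of (\ref{eq:1.1}) with $(-\Delta)^3\rho$, the advection term is again bounded by $C\|u\|_{C^3}\|\rho\|_{\dot H^3}^2$ and the dissipation still contributes $\|\rho\|_{\dot H^{3+\alpha/2}}^2$, exactly as in (\ref{eq:3.43})--(\ref{eq:3.44}). Only the chemotactic term
$$\int_{\mathbb T^d}\nabla\cdot(\rho B(\rho))(-\Delta)^3\rho\,dx$$
requires reworking, since now $B(\rho)=\nabla K\ast\rho$ with $\Delta K$ an honest $L^1(\mathbb T^d)$ function rather than a Dirac mass.

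The essential identity is
$$\nabla\cdot(\rho B(\rho))=\nabla\rho\cdot(\nabla K\ast\rho)+\rho(\Delta K\ast\rho),$$
in which the second summand plays the role of $-\rho^2$ from the $\beta=d$ computation in (\ref{eq:3.45}). I would integrate $(-\Delta)^3$ by parts to distribute six derivatives among the three factors and use the commutation $D^l(\Delta K\ast\rho)=\Delta K\ast D^l\rho$ together with Young's inequality $\|\Delta K\ast f\|_{L^q}\leq\|\Delta K\|_{L^1}\|f\|_{L^q}$. This replaces the pointwise factor used in Proposition \ref{prop:3.1} and, combined with H\"older and the Gagliardo-Nirenberg interpolations (\ref{eq:3.46})--(\ref{eq:2}), yields
$$\Bigl|\int_{\mathbb T^d}\rho(\Delta K\ast\rho)(-\Delta)^3\rho\,dx\Bigr|\leq C\|\rho\|_{L^\infty}\|\rho\|_{\dot H^3}^2+C\|\rho\|_{\dot H^3}^{1+\theta_1+\theta_2}.$$
For the first summand, the same integration-by-parts scheme produces two generic shapes: triple integrals $\int D^l\rho\,D^m\rho\,D^n\rho\,dx$ with $l+m+n=6$ and indices in $\{0,1,2,3\}$, handled identically to Proposition \ref{prop:3.1} once one uses that $\nabla K\ast\cdot$ is bounded on $L^q$ for $1<q<\infty$ via Hardy-Littlewood-Sobolev applied to the singular kernel $\nabla K\sim x/|x|^\beta$; and the exceptional top-order term
$$\int_{\mathbb T^d}(\partial_{i_1}\partial_{i_2}\partial_{i_3}\nabla\rho)\cdot(\nabla K\ast\rho)(\partial_{i_1}\partial_{i_2}\partial_{i_3}\rho)\,dx,$$
which, after one integration by parts in the derivative $\nabla$, reduces to $-\tfrac12\int\rho(\Delta K\ast\rho)(\partial^3\rho)^2\,dx$ up to terms of the previous shape, dominated by $\|\Delta K\|_{L^1}\|\rho\|_{L^\infty}\|\rho\|_{\dot H^3}^2$.

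Assembling these bounds reproduces the differential inequality (\ref{eq:3.53}),
$$\frac{d}{dt}\|\rho\|_{\dot H^3}^2\leq -C\|\rho\|_{\dot H^3}^\gamma+C(\|u\|_{C^3}+\|\rho\|_{L^\infty})\|\rho\|_{\dot H^3}^2+C\|\rho\|_{\dot H^3}^{1+\theta_1+\theta_2},$$
with the same Gagliardo-Nirenberg parameter $q_3'$ chosen so that $2<1+\theta_1+\theta_2<\gamma$. A standard ODE/Gronwall comparison then bounds $\|\rho\|_{H^3}$ in terms of $\|\rho_0\|_{H^3}$ and $\int_0^\tau\|\rho(t,\cdot)\|_{L^\infty}\,dt$, establishing the criterion. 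The main technical obstacle is the bookkeeping of the exceptional top-order term when derivatives are moved across the nonlocal factor $\nabla K\ast\rho$: one must track commutator contributions and verify that the integrability $\Delta K\in L^1$ (which fails at $\beta=d$ and dictates the present restriction $\beta<d$) suffices to close all estimates without increasing the power of $\|\rho\|_{\dot H^3}$ beyond what the dissipation can absorb.
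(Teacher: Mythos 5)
Your overall strategy (redo the $\dot H^3$ energy estimate, isolate the chemotactic term, reach the differential inequality (\ref{eq:3.53}) and close by interpolation against the dissipation) is the same as the paper's, and your treatment of the second summand $\rho(\Delta K\ast\rho)$ and of the exceptional top-order term via $\|\Delta K\ast\rho\|_{L^\infty}\leq\|\Delta K\|_{L^1}\|\rho\|_{L^\infty}$ is sound. But there is a genuine gap in your handling of the low-$l$ terms of the first summand $\int\nabla\rho\cdot(\nabla K\ast\rho)(-\Delta)^3\rho\,dx$. You propose to treat $\nabla K\ast\cdot$ merely as a bounded operator on $L^q$ and claim the resulting generic terms are $\int D^l\rho\,D^m\rho\,D^n\rho$ with $l+m+n=6$. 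That count is wrong under your own scheme: writing $D^{3-l}(\nabla K\ast\rho)=\nabla K\ast D^{3-l}\rho$ and using only $L^q$-boundedness, the term $\int D^l(\nabla\rho)\cdot(\nabla K\ast D^{3-l}\rho)\,D^3\rho$ carries $(l+1)+(3-l)+3=7$ derivatives, one more than in Proposition \ref{prop:3.1} (where $\nabla(-\Delta)^{-1}$ genuinely removes a derivative). For $l=0$ this forces you to estimate either $\|D^3\rho\|_{L^q}$ with $q>2$, which cannot be interpolated between any $L^{q_1}$ and $\dot H^3$, or $\|D\rho\|_{L^\infty}\|\rho\|_{\dot H^3}^2$, whose Gagliardo--Nirenberg exponent gives a total power $2+\tfrac{2}{6-d}$ of $\|\rho\|_{\dot H^3}$; since the dissipation only yields $\gamma\leq 2+\tfrac{2\alpha}{6-d}$, this closes only when $\alpha>1$ (and the interpolation fails outright for $d\geq 5$). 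So your argument does not cover the full range $0<\alpha<2$, $d>2$.

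The paper avoids this by using the representation (\ref{eq:1.2}), $\nabla K\ast\rho=\nabla(-\Delta)^{-\frac{d+2-\beta}{2}}\rho$, so that $D^{3-l}(\nabla K\ast\rho)$ is a derivative of $\rho$ of order $3-l+\beta-d+1\leq 2-l$, and then applies the homogeneous Sobolev embedding (Lemma \ref{lem:2.3}) to convert the negative-order gain $d-\beta$ into integrability, e.g.\ $\|D^{\beta-d+2}\rho\|_{L^q}\leq C\|D^2\rho\|_{L^{p_1'}}$ with $\tfrac{d-\beta}{d}=\tfrac{1}{p_1'}-\tfrac1q$. This restores the terms to the shape $\|D\rho\|_{L^{p_1}}\|D^2\rho\|_{L^{p_1'}}\|\rho\|_{\dot H^3}$, whose Gagliardo--Nirenberg exponents $1+\theta_1+\theta_2$ (and $1+\theta_3+\theta_4$) stay strictly below $\gamma$. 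You need this smoothing step --- Young's inequality on $\nabla K\in L^1$ alone is not enough. (Also, a small slip: your reduction of the exceptional term should read $-\tfrac12\int(\Delta K\ast\rho)(\partial^3\rho)^2\,dx$, without the extra factor of $\rho$.)
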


\begin{proof}
For the fourth term of (\ref{eq:3.42}), we deduce that
\begin{equation}\label{eq:4.12}
\begin{aligned}
\int_{\mathbb{T}^d}\nabla\cdot(\rho B(\rho))(-\Delta)^3\rho dx&=\int_{\mathbb{T}^d}\nabla\cdot(\rho \nabla K\ast\rho)(-\Delta)^3\rho dx\\
&=\int_{\mathbb{T}^d}\nabla\rho\cdot\nabla K\ast\rho(-\Delta)^3\rho dx+\int_{\mathbb{T}^d}\rho\Delta K \ast \rho(-\Delta)^3\rho dx.
\end{aligned}
\end{equation}
According to integrating by parts to the first term of the right hand side of (\ref{eq:4.12}), we obtain
\begin{equation}\label{eq:4.13}
\int_{\mathbb{T}^d}\nabla\rho\cdot\nabla K\ast\rho(-\Delta)^3\rho dx\sim \sum_{l=0}^{3}\int_{\mathbb{T}^d}D^{l}(\nabla\rho)\cdot D^{3-l}(\nabla K\ast \rho) D^{3}\rho dx.
\end{equation}
When $l=0$, for $2\leq p_1<\infty$, according to (\ref{eq:1.2}), we imply that
$$
\begin{aligned}
\int_{\mathbb{T}^d}\nabla\rho\cdot D^{3}(\nabla K\ast \rho) D^{3}\rho dx
&\leq\|D\rho\|_{L^{p_1}}\|D^{3}(\nabla K\ast \rho)\|_{L^{q}}\|\rho\|_{\dot{H}^3}\\
&=\|D\rho\|_{L^{p_1}}\|D^{\beta-d+2} \rho\|_{L^{q}}\|\rho\|_{\dot{H}^3}\\
&\leq C\|D\rho\|_{L^{p_1}}\|D^{2} \rho\|_{L^{p'_1}}\|\rho\|_{\dot{H}^3},
\end{aligned}
$$
and $l=1$, we deduce that
$$
\begin{aligned}
\int_{\mathbb{T}^d}D(\nabla\rho)\cdot D^{2}(\nabla K\ast \rho) D^{3}\rho dx
&\leq \|D^2\rho\|_{L^{p_1}}\|D^{2}(\nabla K\ast \rho)\|_{L^{q}}\|\rho\|_{\dot{H}^3}\\
&=\|D^2\rho\|_{L^{p_1}}\|D^{\beta-d+1} \rho\|_{L^{q}}\|\rho\|_{\dot{H}^3}\\
&\leq C\|D\rho\|_{L^{p'_1}}\|D^{2} \rho\|_{L^{p_1}}\|\rho\|_{\dot{H}^3},
\end{aligned}
$$
where
$$
\frac{1}{p_1}+\frac{1}{q}=\frac{1}{2},
$$
and
$$
\frac{d-\beta}{d}=\frac{1}{p'_1}-\frac{1}{q}.
$$
By Gagliardo-Nirenberg inequality, for $1\leq q_1,q_2<\infty$, we have
$$
\|D\rho\|_{L^{p_1}}\|D^{2} \rho\|_{L^{p'_1}}\|\rho\|_{\dot{H}^3}\leq
C\|\rho\|^{1-\theta_1}_{L^{q_1}}\|\rho\|^{1-\theta_2}_{L^{q_2}}\|\rho\|^{1+\theta_1+\theta_2}_{\dot{H}^3}
\leq C\|\rho\|^{1+\theta_1+\theta_2}_{\dot{H}^3},
$$
where
$$
\theta_1=\frac{\frac{1}{p_1}-\frac{1}{d}-\frac{1}{q_1}}{\frac{1}{2}-\frac{3}{d}-\frac{1}{q_1}},\quad
\theta_2=\frac{\frac{1}{p'_1}-\frac{2}{d}-\frac{1}{q_2}}{\frac{1}{2}-\frac{3}{d}-\frac{1}{q_2}}.
$$
And for $1\leq q_3,q_4<\infty$, we also have
$$
\|D\rho\|_{L^{p'_1}}\|D^{2} \rho\|_{L^{p_1}}\|\rho\|_{\dot{H}^3}\leq C\|\rho\|^{1-\theta_3}_{L^{q_3}}\|\rho\|^{1-\theta_4}_{L^{q_4}}\|\rho\|^{1+\theta_3+\theta_4}_{\dot{H}^3}
\leq C\|\rho\|^{1+\theta_1+\theta_2}_{\dot{H}^3},
$$
where
$$
\theta_3=\frac{\frac{1}{p'_1}-\frac{1}{d}-\frac{1}{q_3}}{\frac{1}{2}-\frac{3}{d}-\frac{1}{q_3}},\quad
\theta_4=\frac{\frac{1}{p_1}-\frac{2}{d}-\frac{1}{q_4}}{\frac{1}{2}-\frac{3}{d}-\frac{1}{q_4}}.
$$
Due to $\Delta K\sim\frac{1}{|x|^\beta}$ and $\beta\in [2,d)$, we  imply that there exists a constant $C_0=C(\beta)$, such that
\begin{equation}\label{eq:4.4}
\|\Delta K\|_{L^1}\leq C_0.
\end{equation}
Then for $l=2$, we get
$$
\int_{\mathbb{T}^d}D^{2}(\nabla\rho)\cdot D(\nabla K\ast \rho) D^{3}\rho dx\leq
 \|\Delta K \ast \rho\|_{L^\infty}\|\rho\|^{2}_{\dot{H}^3}\leq C_0\|\rho\|_{L^\infty}\|\rho\|^{2}_{\dot{H}^3},
$$
and when $l=3$, we imply
$$
\int_{\mathbb{T}^d}D^{3}(\nabla\rho)\cdot (\nabla K\ast \rho) D^{3}\rho dx
=-\frac{1}{2}\int_{\mathbb{T}^d}(D^{3}\rho)^2 \Delta K \ast \rho dx.
$$
Therefore, we have
\begin{equation}\label{eq:4.14}
\int_{\mathbb{T}^d}\nabla\rho\cdot\nabla K\ast\rho(-\Delta)^3\rho dx\leq C(\|\rho\|^{1+\theta_1+\theta_2}_{\dot{H}^3}+\|\rho\|^{1+\theta_3+\theta_4}_{\dot{H}^3})
+C\|\rho\|_{L^\infty}\|\rho\|^{2}_{\dot{H}^3}.
\end{equation}
For the second term of the right hand side of (\ref{eq:4.12}), we get
\begin{equation}\label{eq:4.15}
\int_{\mathbb{T}^d}\rho\Delta K\ast\rho(-\Delta)^3\rho dx\sim \sum_{l=0}^{3}\int_{\mathbb{T}^d}D^{l}\rho D^{3-l}(\Delta K\ast \rho) D^{3}\rho dx.
\end{equation}
when $l=1,2$, the similar with above, we obtain
$$
\sum_{l=1}^{2}\int_{\mathbb{T}^d}D^{l}\rho D^{3-l}(\Delta K\ast \rho) D^{3}\rho dx
\leq C(\|\rho\|^{1+\theta_1+\theta_2}_{\dot{H}^3}+\|\rho\|^{1+\theta_3+\theta_4}_{\dot{H}^3}).
$$
If we $l=0$, for $2< p_2<\infty$, we deduce that
$$
\begin{aligned}
\int_{\mathbb{T}^d}\rho D^{3}(\Delta K\ast \rho) D^{3}\rho dx &\leq \|\rho\|_{L^{p_2}}
\|D^{\beta-d+3} \rho\|_{L^{q_0}}\|\rho\|_{\dot{H}^3}\\
&\leq C\|\rho\|_{L^{p_2}}
\|D^{3} \rho\|_{L^{p_3}}\|\rho\|_{\dot{H}^3}\\
&\leq C\|\rho\|_{L^{p_2}}
\|D^{3} \rho\|_{L^{2}}\|\rho\|_{\dot{H}^3}\\
&\leq C\|\rho\|^2_{\dot{H}^3},
\end{aligned}
$$
where
$$
\frac{1}{p_2}+\frac{1}{q_0}=\frac{1}{2},
$$
and
$$
\frac{d-\beta}{d}=\frac{1}{p_3}-\frac{1}{q_0}, \quad  1<p_3<2.
$$
And when $l=3$, we imply that
$$
\int_{\mathbb{T}^d}D^{3}\rho(\Delta K\ast \rho) D^{3}\rho dx\leq \|\Delta K\ast \rho\|_{L^\infty}\|\rho\|^2_{\dot{H}^3}\leq C\|\rho\|^2_{\dot{H}^3},
$$
so we have
\begin{equation}\label{eq:4.16}
\int_{\mathbb{T}^d}\rho\Delta K\ast\rho(-\Delta)^3\rho dx\leq C(\|\rho\|^{1+\theta_1+\theta_2}_{\dot{H}^3}+\|\rho\|^{1+\theta_3+\theta_4}_{\dot{H}^3})
+C\|\rho\|^{2}_{\dot{H}^3}.
\end{equation}
Thus, we deduce by (\ref{eq:4.14}) and (\ref{eq:4.16}) that
\begin{equation}\label{eq:4.17}
\int_{\mathbb{T}^d}\nabla\cdot(\rho B(\rho))(-\Delta)^3\rho dx\leq C(\|\rho\|^{1+\theta_1+\theta_2}_{\dot{H}^3}+\|\rho\|^{1+\theta_3+\theta_4}_{\dot{H}^3})
+C\|\rho\|^{2}_{\dot{H}^3}.
\end{equation}
Combing (\ref{eq:3.42}), (\ref{eq:3.43}), (\ref{eq:3.44}) and (\ref{eq:4.17}), we imply that
\begin{equation}\label{eq:4.18}
\frac{d}{dt}\|\rho\|_{\dot{H}^3}^2\leq -2\|\rho\|_{\dot{H}^{3
+\frac{\alpha}{2}}}^2
+C(\|u\|_{C^3}+1)\|\rho\|_{\dot{H}^{3}}^2
+C(\|\rho\|^{1+\theta_1+\theta_2}_{\dot{H}^3}+\|\rho\|^{1+\theta_3+\theta_4}_{\dot{H}^3}).
\end{equation}
According to (\ref{eq:3.52}) and  for $1\leq p_0<\infty$, one has
\begin{equation}\label{eq:4.19}
-\|\rho\|^{2}_{\dot{H}^{3+\frac{\alpha}{2}}}\leq
-C_4^{-1}\|\rho\|^\gamma_{\dot{H}^3}\leq-C\|\rho\|^\gamma_{\dot{H}^3}.
\end{equation}
where
$$
\gamma=\frac{\frac{4d}{p_0}+12-2d+2\alpha}{\frac{2d}{p_0}+6-d},\quad 1\leq p_0<\infty.
$$
By (\ref{eq:4.18}) and (\ref{eq:4.19}), we have
\begin{equation}\label{eq:4.20}
\frac{d}{dt}\|\rho\|_{\dot{H}^3}^2\leq -C\|\rho\|^\gamma_{\dot{H}^3}
+C(\|u\|_{C^3}+1)\|\rho\|_{\dot{H}^{3}}^2
+C(\|\rho\|^{1+\theta_1+\theta_2}_{\dot{H}^3}+\|\rho\|^{1+\theta_3+\theta_4}_{\dot{H}^3}).
\end{equation}
We can choose $p_0$, such that
$$
\gamma> \max\{2,1+\theta_1+\theta_2,1+\theta_3+\theta_4\}.
$$
By the differential inequality (\ref{eq:4.20}), then the conclusion can easily be deduced. This completes the proof of Proposition \ref{prop:4.1}.
\end{proof}


\vskip .1in
\subsection{$L^\infty$ estimate of $\rho$}
We obtain the $L^\infty$ estimate of the solution by weakly mixing. The same idea is from Section 3.

\vskip .1in
 Let us denote by $\overline{x}_t$ the point such that
$$
\widetilde{\rho}(t)=\rho(t,\overline{x}_t)=\max_{x\in \mathbb{T}^d}\rho(t,x),
$$
then for a fixed $t\geq0$, for a derivation at the point of maximum, we see that
\begin{equation}\label{eq:4.1}
\left(\nabla\cdot(\rho B(\rho))\right)(t,\overline{x}_t)=\nabla \rho\cdot \nabla K\ast\rho(t,\overline{x}_t)+\rho\Delta K\ast\rho(t,\overline{x}_t).
\end{equation}
For the first term of the right hand side in (\ref{eq:4.1}), one has
\begin{equation}\label{eq:4.2}
\nabla \rho\cdot \nabla K\ast\rho(t,\overline{x}_t)=0,
\end{equation}
and for the second term of the right hand side in (\ref{eq:4.1}), by Young's inequality, we have
\begin{equation}\label{eq:4.3}
\begin{aligned}
\rho\Delta K\ast\rho(t,\overline{x}_t)&\leq\|\rho\Delta K\ast\rho\|_{L^\infty} \\
&\leq \|\rho\|_{L^\infty}\|\Delta K\ast\rho\|_{L^\infty}\\
&\leq\|\rho\|_{L^\infty}^2\|\Delta K\|_{L^1}=\widetilde{\rho}^2\|\Delta K\|_{L^1}.
\end{aligned}
\end{equation}
Thus, combing (\ref{eq:4.4}), (\ref{eq:1.1}), (\ref{eq:4.1}), (\ref{eq:4.2}) and (\ref{eq:4.3}), we implies that the evolution of $\widetilde{\rho}$ follows
\begin{equation}\label{eq:4.5}
\frac{d}{dt}\widetilde{\rho}+(-\Delta)^{\frac{\alpha}{2}}\widetilde{\rho}
-C_0\widetilde{\rho}^2\leq0.
\end{equation}
According to the nonlinear maximum principle, one has
$$
\widetilde{\rho}(t)\leq C(d,p)\|\rho\|_{L^p},
$$
if not, we imply that
\begin{equation}\label{eq:4.6}
\frac{d}{dt}\widetilde{\rho}\leq C_0\widetilde{\rho}^2
-C(\alpha, d,p)\frac{\widetilde{\rho}^{1+\frac{p\alpha}{d}}}{\|\rho\|_{L^p}^{\frac{p\alpha}{d}}}.
\end{equation}

\vskip .1in
We give the local $L^2$ and $L^\infty$ estimates of the solution.

\vskip .1in
\begin{lemma}\label{lem:4.1}
Let $0<\alpha<2, \beta\in[2,d),d>2$, $\rho(t,x)$ is the local solution of equation (\ref{eq:1.1}) with initial data $\rho_0(x)\geq0$. Suppose that $\|\rho_0\|_{L^2}=B_0,  \|\rho_0\|_{L^\infty}\leq C_{\infty}$. Then there exists a time $\tau_1>0$, for any $0\leq t \leq \tau_1$, we have
$$
\|\rho(t,\cdot)-\overline{\rho}\|_{L^2}\leq 2(B_0^2-\overline{\rho}^2)^{\frac{1}{2}}, \quad \|\rho(t,\cdot)\|_{L^\infty}\leq 2C_{\infty},
$$
\end{lemma}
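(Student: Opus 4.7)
The plan is to mirror the proof of Lemma \ref{lem:3.1}, exploiting the key new fact that for $\beta\in[2,d)$ the kernel satisfies $\|\Delta K\|_{L^1}\leq C_0$ (see (\ref{eq:4.4})), which allows Young's convolution inequality to stand in for the specific identity $\Delta(-\Delta)^{-1}=-I$ used when $\beta=d$. First I would multiply (\ref{eq:1.1}) by $\rho-\overline{\rho}$ and integrate over $\mathbb{T}^d$: the advection term vanishes by incompressibility of $u$, while Lemma \ref{lem:2.1} together with Lemma \ref{lem:2.2} shows that the fractional dissipation contributes $\|\rho\|_{\dot{H}^{\alpha/2}}^2$ to the left-hand side.

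For the nonlinear term, two integrations by parts (using $\rho\nabla\rho=\tfrac12\nabla\rho^2$) collapse it to the clean identity
\[
\int_{\mathbb{T}^d}\nabla\cdot(\rho B(\rho))(\rho-\overline{\rho})\,dx = \tfrac12\int_{\mathbb{T}^d}\rho^2(\Delta K\ast\rho)\,dx.
\]
Expanding $\rho^2=(\rho-\overline{\rho})^2+2\overline{\rho}(\rho-\overline{\rho})+\overline{\rho}^2$ and noting that $K\ast 1$ is constant on the torus (so $\Delta K\ast\rho=\Delta K\ast(\rho-\overline{\rho})$ and $\int \Delta K\ast\rho\,dx=0$), the $\overline{\rho}^2$ piece integrates to zero; the cross piece becomes $\overline{\rho}\int(\rho-\overline{\rho})\,\Delta K\ast(\rho-\overline{\rho})\,dx$, which by Cauchy--Schwarz and Young's convolution inequality is at most $C_0\overline{\rho}\|\rho-\overline{\rho}\|_{L^2}^2$; and the leading piece is bounded by $\tfrac{C_0}{2}\|\rho\|_{L^\infty}\|\rho-\overline{\rho}\|_{L^2}^2$ via $\|\Delta K\ast\rho\|_{L^\infty}\leq C_0\|\rho\|_{L^\infty}$. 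Altogether,
\[
\int_{\mathbb{T}^d}\nabla\cdot(\rho B(\rho))(\rho-\overline{\rho})\,dx \leq C_0\bigl(\tfrac12\|\rho\|_{L^\infty}+\overline{\rho}\bigr)\|\rho-\overline{\rho}\|_{L^2}^2.
\]

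Next I would install the local $L^\infty$ bound. Discarding the dissipation in the pointwise evolution (\ref{eq:4.5}) for $\widetilde{\rho}(t)=\|\rho(t,\cdot)\|_{L^\infty}$ gives the scalar ODE $\tfrac{d}{dt}\widetilde{\rho}\leq C_0\widetilde{\rho}^2$; combined with $\widetilde{\rho}(0)\leq C_\infty$ this yields $\widetilde{\rho}(t)\leq 2C_\infty$ on $[0,\tau_0]$ with $\tau_0=\min\{1/(2C_0C_\infty),T\}$, where $T$ is the local lifespan. Feeding this back into the energy identity produces
\[
\frac{d}{dt}\|\rho-\overline{\rho}\|_{L^2}^2 \leq -2\|\rho\|_{\dot{H}^{\alpha/2}}^2 + 2C_0(C_\infty+\overline{\rho})\|\rho-\overline{\rho}\|_{L^2}^2,
\]
and dropping the dissipation and applying Gronwall with $\|\rho_0-\overline{\rho}\|_{L^2}^2=B_0^2-\overline{\rho}^2$ gives $\|\rho(t,\cdot)-\overline{\rho}\|_{L^2}\leq 2(B_0^2-\overline{\rho}^2)^{1/2}$ for $t\in[0,\tau_1]$, with the explicit choice $\tau_1=\min\{\tau_0,\ln 4/[2C_0(C_\infty+\overline{\rho})]\}$.

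The main obstacle is ensuring that the cross term generated by the nonzero mean $\overline{\rho}$ is genuinely quadratic in $\|\rho-\overline{\rho}\|_{L^2}$ rather than linear with a constant forcing; the enabling observation, which is not automatic for an arbitrary integrable kernel but holds here because $K\ast 1$ is constant on $\mathbb{T}^d$, is that $\Delta K\ast\rho$ may be replaced by $\Delta K\ast(\rho-\overline{\rho})$. This mean-subtraction is precisely what allows Young's convolution estimate to close the differential inequality in a form fully analogous to Lemma \ref{lem:3.1}, despite the absence of the local algebraic identity available when $\beta=d$.
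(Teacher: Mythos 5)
Your proposal is correct and follows the same skeleton as the paper's proof: a Riccati bound $\frac{d}{dt}\widetilde{\rho}\leq C_0\widetilde{\rho}^2$ from the nonlinear maximum principle gives $\|\rho\|_{L^\infty}\leq 2C_\infty$ on $[0,\tau_0]$ with $\tau_0=\min\{1/(2C_0C_\infty),T\}$, and the $L^2$ estimate comes from testing with $\rho-\overline{\rho}$ and reducing the nonlinear term, after integration by parts, to $\frac12\int\rho^2(\Delta K\ast\rho)\,dx$ controlled via $\|\Delta K\|_{L^1}\leq C_0$. The one genuine difference is your treatment of the cross term $\overline{\rho}\int(\rho-\overline{\rho})\,\Delta K\ast\rho\,dx$: you observe that $\Delta K\ast$ annihilates constants (equivalently, $\nabla\cdot B(\overline{\rho})=0$ by (\ref{eq:1.2})), replace $\Delta K\ast\rho$ by $\Delta K\ast(\rho-\overline{\rho})$, and obtain a bound quadratic in $\|\rho-\overline{\rho}\|_{L^2}$ by Cauchy--Schwarz and Young's convolution inequality; the paper instead bounds this term crudely by the constant $C_0\overline{\rho}(\|\rho\|_{L^\infty}+\overline{\rho})$, which introduces an additive forcing term into the differential inequality and forces the more complicated choice of $\tau_1$ appearing in its proof. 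Your version closes with a pure Gronwall argument and the cleaner $\tau_1=\min\{\tau_0,\ \ln 4/[2C_0(C_\infty+\overline{\rho})]\}$, exactly parallel to Lemma \ref{lem:3.1}; both routes are valid, and yours is arguably tidier.
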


\begin{proof}
Due to $\|\rho_0\|_{L^\infty}\leq C_\infty$, we define
$$
\tau_0=\min \left\{  \frac{1}{2C_0C_{\infty}},T\right\},
$$

Then for the solution $\rho(t,x)$ of equation (\ref{eq:1.1}), due to $L^1$ conservation,  according to $L^1$ conservation of the solution and (\ref{eq:4.6}), we imply that
\begin{equation}\label{eq:4.7}
\|\rho(t,\cdot)\|_{L^\infty}\leq 2C_\infty, \quad 0\leq t\leq \tau_0.
\end{equation}
The fourth term of the left hand side of (\ref{eq:3.5}) can be estimated as
\begin{equation}\label{eq:4.8}
\begin{aligned}
\int_{\mathbb{T}^d} \nabla(\rho B(\rho))(\rho-\overline{\rho})dx&=\int_{\mathbb{T}^d} \nabla(\rho \nabla K\ast \rho)(\rho-\overline{\rho})dx\\
&=\frac{1}{2}\int_{\mathbb{T}^d}(\rho-\overline{\rho})^2 \Delta K\ast \rho dx+\overline{\rho}\int_{\mathbb{T}^d}(\rho-\overline{\rho}) \Delta K\ast \rho dx\\
&\leq \frac{1}{2}\|\Delta K\ast \rho\|_{L^\infty}\|\rho-\overline{\rho}\|_{L^2}^2+\overline{\rho}\|\Delta K\ast \rho\|_{L^\infty}\|\rho-\overline{\rho}\|_{L^\infty}\\
&\leq \frac{C_0}{2}\|\rho\|_{L^\infty}\|\rho-\overline{\rho}\|_{L^2}^2
+C_0\overline{\rho}(\|\rho\|_{L^\infty}+\overline{\rho}).
\end{aligned}
\end{equation}
Combing (\ref{eq:3.5}), (\ref{eq:3.6}), (\ref{eq:3.7}) and (\ref{eq:4.8}), we deduce that
\begin{equation}\label{eq:4.9}
\frac{d}{dt}\|\rho-\overline{\rho}\|_{L^2}^2
 \leq-2\|\rho\|_{\dot{H}^{\frac{\alpha}{2}}}^2+C_0\|\rho\|_{L^\infty}\|\rho-\overline{\rho}\|_{L^2}^2
+2C_0\overline{\rho}(\|\rho\|_{L^\infty}+\overline{\rho}).
\end{equation}
According to (\ref{eq:4.7}) and (\ref{eq:4.9}), we imply that there exists
$$
\tau_1=\min\left\{\tau_0, \frac{ 1}{2C_0C_\infty}\ln \frac{4C_\infty(B_0^2-\overline{\rho}^2)-\overline{\rho}(2C_\infty+\overline{\rho})}
{C_\infty(B_0^2-\overline{\rho}^2)-\overline{\rho}(2C_\infty+\overline{\rho})} \right\},
$$
such that for any $0<t\leq \tau_1$, one has
$$
\|\rho(t,\cdot)-\overline{\rho}\|_{L^2}\leq 2(B_0^2-\overline{\rho}^2)^{\frac{1}{2}}.
$$
According to (\ref{eq:4.7}) and the definition of $\tau_1$, for any $0 \leq t \leq \tau_1$, we obtain
$$
\|\rho(t,\cdot)\|_{L^\infty}\leq 2C_\infty, \quad 0\leq t\leq \tau_1.
$$
This completes the proof of Lemma \ref{lem:4.1}.
\end{proof}
\vskip .1in
Next, We give an  approximation lemma.
\begin{lemma}\label{lem:4.2}
Let $0<\alpha<2, \beta\in [2,d), d>2$, suppose that the vector field $u(t,x)$ is smooth incompressible flow, and $F(t)\in L^\infty_{loc}[0,\infty)$. Let $\rho(t,x), \omega(t,x)$ be the local solution of equation (\ref{eq:1.1}) and (\ref{eq:2.3}) respectively with $\rho_0\in H^3(\mathbb{T}^d)\cap L^{\infty}(\mathbb{T}^d), \rho_0\geq0$. Then for every $t\in [0,T]$, we have
$$
\begin{aligned}
\frac{d}{dt}\|\rho-\omega\|_{L^2}^2
&\leq -\|\rho\|_{\dot{H}^{\frac{\alpha}{2}}}^2+F(t)^2\|\rho_0\|_{\dot{H}^{\frac{\alpha}{2}}}^2
+C\|\rho\|_{L^\infty}\|\rho\|_{L^2}^2\\
&\quad+C(\|\nabla \rho\|_{L^4}\|\rho\|_{L^4}+\|\rho\|_{L^\infty}\|\rho\|_{L^2})\|\rho_0\|_{L^2}.
\end{aligned}
$$
\end{lemma}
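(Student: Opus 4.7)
The strategy mirrors Lemma \ref{lem:3.2} exactly; the only genuinely new content lies in how one handles the nonlinear drift $\nabla\cdot(\rho B(\rho))$ in the regime $\beta\in[2,d)$, where now $B(\rho)=\nabla K\ast\rho$ with $\Delta K\in L^1(\mathbb{T}^d)$. First I would subtract (\ref{eq:2.3}) from (\ref{eq:1.1}) to get
$$\partial_t(\rho-\omega)+u\cdot\nabla(\rho-\omega)+(-\Delta)^{\frac{\alpha}{2}}\rho+\nabla\cdot(\rho B(\rho))=0,$$
test against $\rho-\omega$ and integrate over $\mathbb{T}^d$. The transport term vanishes by incompressibility, exactly as in (\ref{eq:3.15}).

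For the dissipation contribution, the argument is word-for-word that of (\ref{eq:3.16}): by Lemma \ref{lem:2.1} one has $\int (-\Delta)^{\alpha/2}\rho\,\rho\,dx=\|\rho\|_{\dot H^{\alpha/2}}^2$, while Cauchy--Schwarz bounds the cross term by $\|\rho\|_{\dot H^{\alpha/2}}\|\omega\|_{\dot H^{\alpha/2}}$. Young's inequality absorbs half of $\|\rho\|_{\dot H^{\alpha/2}}^2$ onto the left and leaves $\tfrac{1}{2}\|\omega\|_{\dot H^{\alpha/2}}^2$, which Lemma \ref{lem:2.5} controls by $\tfrac{1}{2}F(t)^2\|\rho_0\|_{\dot H^{\alpha/2}}^2$.

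The substantive step is the nonlinear term, which I would split as
$$\int_{\mathbb{T}^d}\nabla\cdot(\rho\,\nabla K\ast\rho)\,\rho\,dx-\int_{\mathbb{T}^d}\nabla\cdot(\rho\,\nabla K\ast\rho)\,\omega\,dx.$$
For the first piece, integrating by parts twice rewrites it as $\tfrac{1}{2}\int \rho^{2}\,\Delta K\ast\rho\,dx$, and Young's inequality for convolutions together with (\ref{eq:4.4}) gives the bound $\tfrac{C_0}{2}\|\rho\|_{L^\infty}\|\rho\|_{L^2}^2$; this replaces the $-\tfrac12\int\rho^3\,dx$ identity used in Lemma \ref{lem:3.2}, and sign is no longer free, so we just take absolute values. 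For the second piece, expanding by the product rule gives $\int(\nabla\rho\cdot\nabla K\ast\rho)\omega\,dx+\int \rho\,\Delta K\ast\rho\,\omega\,dx$. H\"older then Young's convolution inequality yield
$$\leq C\bigl(\|\nabla\rho\|_{L^4}\|\nabla K\ast\rho\|_{L^4}+\|\rho\|_{L^\infty}\|\Delta K\ast\rho\|_{L^2}\bigr)\|\omega\|_{L^2},$$
and since $\|\nabla K\|_{L^1(\mathbb{T}^d)}<\infty$ (as $\beta-1<d$) and $\|\Delta K\|_{L^1}\leq C_0$, this is further bounded by $C(\|\nabla\rho\|_{L^4}\|\rho\|_{L^4}+\|\rho\|_{L^\infty}\|\rho\|_{L^2})\|\omega\|_{L^2}$.

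To finish, I would use that $\omega$ solves a pure transport equation with divergence-free velocity, so $\|\omega(t,\cdot)\|_{L^2}=\|\rho_0\|_{L^2}$ is conserved, and collect the three contributions to recover the stated inequality. The only real obstacle is the term $\int \rho^{2}\Delta K\ast\rho\,dx$, which in the previous section ($\beta=d$) had a convenient sign and a clean identity; here $\Delta K$ is genuinely singular and only the integrability bound (\ref{eq:4.4}) saves us, so everything hinges on being strictly below the critical exponent $\beta=d$.
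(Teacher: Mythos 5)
Your proposal is correct and follows essentially the same route as the paper's proof: the same splitting of the nonlinear term into $\int\nabla\cdot(\rho\,\nabla K\ast\rho)\,\rho\,dx-\int\nabla\cdot(\rho\,\nabla K\ast\rho)\,\omega\,dx$, the same integration by parts yielding $\tfrac{1}{2}\int\rho^{2}\,\Delta K\ast\rho\,dx$, the same use of Young's convolution inequality with $\|\nabla K\|_{L^1},\|\Delta K\|_{L^1}<\infty$ for $\beta\in[2,d)$, and the same treatment of the dissipative and transport terms as in Lemma \ref{lem:3.2} combined with Lemma \ref{lem:2.5} and the conservation of $\|\omega\|_{L^2}$.
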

\begin{proof}
The fourth term of the left hand side of (\ref{eq:3.14}) can be estimate as
\begin{equation}\label{eq:4.10}
\begin{aligned}
&\int_{\mathbb{T}^d}\nabla\cdot(\rho B(\rho))(\rho-\omega)dx=\int_{\mathbb{T}^d}\nabla\cdot(\rho \nabla K\ast \rho)(\rho-\omega)dx\\
&=\int_{\mathbb{T}^d}\nabla\cdot(\rho \nabla K\ast \rho)\rho dx-\int_{\mathbb{T}^d}\nabla\cdot(\rho \nabla K\ast \rho)\omega dx\\
&=\frac{1}{2}\int_{\mathbb{T}^d}\rho^2 \Delta K\ast \rho dx-\int_{\mathbb{T}^d}\nabla\cdot(\rho\nabla K\ast \rho )\omega dx\\
&\leq \frac{1}{2}\|\Delta K\ast \rho\|_{L^\infty}\|\rho\|_{L^2}^2+\|\nabla\cdot(\rho\nabla K\ast \rho )\|_{L^2}\|\omega\|_{L^2}\\
&\leq \frac{1}{2}\|\Delta K\|_{L^1}\|\rho\|_{L^\infty}\|\rho\|_{L^2}^2
+(\|\nabla \rho\|_{L^4}\|\nabla K\ast \rho\|_{L^4}+\|\rho\|_{L^\infty}\|\Delta K\ast \rho\|_{L^2})\|\omega\|_{L^2}\\
&\leq C\|\rho\|_{L^\infty}\|\rho\|_{L^2}^2+C(\|\nabla \rho\|_{L^4}\|\rho\|_{L^4}+\|\rho\|_{L^\infty}\|\rho\|_{L^2})\|\omega\|_{L^2},
\end{aligned}
\end{equation}
where $\|\nabla K\|_{L^1}, \|\Delta K\|_{L^1}$ is bound since $\beta\in[2,d)$. By Young's inequality and (\ref{eq:4.10}), one has
\begin{equation}\label{eq:4.11}
\begin{aligned}
\int_{\mathbb{T}^d}\nabla\cdot(\rho B(\rho))(\rho-\omega)dx&\leq +C(\|\nabla \rho\|_{L^4}\|\rho\|_{L^4}\|\rho\|_{L^\infty}\|\rho\|_{L^2})\|\omega\|_{L^2}\\
&\quad +C\|\rho\|_{L^\infty}\|\rho\|_{L^2}^2.
\end{aligned}
\end{equation}
Combing (\ref{eq:3.14}), (\ref{eq:3.15}), (\ref{eq:3.16}), (\ref{eq:3.22}), (\ref{eq:4.11}) and Lemma \ref{lem:2.5}, we have
$$
\begin{aligned}
\frac{d}{dt}\|\rho-\omega\|_{L^2}^2
&\leq -\|\rho\|_{\dot{H}^{\frac{\alpha}{2}}}^2+F(t)^2\|\rho_0\|_{\dot{H}^{\frac{\alpha}{2}}}^2
+C\|\rho\|_{L^\infty}\|\rho\|_{L^2}^2\\
&\quad+C(\|\nabla \rho\|_{L^4}\|\rho\|_{L^4}+\|\rho\|_{L^\infty}\|\rho\|_{L^2})\|\rho_0\|_{L^2}.
\end{aligned}
$$
This completes the proof of Lemma \ref{lem:4.2}.
\end{proof}

Next, we establish the global $L^\infty$ estimate of the solution.

\begin{proposition}[Global $L^\infty$ estimate]\label{prop1}

Let $0<\alpha<2,\beta\in [2,d), d>2$, suppose $\rho (t,x)$ is the solution of equation (\ref{eq:1.1}) with initial data $\rho_0\geq0, \rho_0\in H^3(\mathbb{T}^d)\cap L^{\infty}(\mathbb{T}^d)$. Then there exist weakly mixing $u$ and a positive constant $C_{L^\infty}$,  such that
$$
\|\rho(t,\cdot)\|_{L^\infty}\leq C_{L^\infty},\quad  t\in [0,+\infty].
$$
\end{proposition}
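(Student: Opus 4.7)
The plan is to mirror the argument of Proposition \ref{prop:3.3} step by step, replacing Lemma \ref{lem:3.1} with Lemma \ref{lem:4.1}, Lemma \ref{lem:3.2} with Lemma \ref{lem:4.2}, and using the modified nonlinear maximum principle inequality (\ref{eq:4.6}) in place of (\ref{eq:3.4}). After reducing to the coupled flow $A u$ and studying the equation (\ref{eq:3.23}), the game is to show that a single well-chosen interval of length $\tau = T_c/A$ reduces the $L^2$ norm of $\rho^A - \overline{\rho}$ geometrically, iterate the reduction $k \sim A\tau_1/T_c$ times to gain a smallness that survives interpolation to $L^p$, and finally convert the resulting $L^p$ bound into an $L^\infty$ bound via (\ref{eq:4.6}).

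First I would fix constants $B_0 = \|\rho_0\|_{L^2}$, $\|\rho_0-\overline{\rho}\|_{L^p} \leq D$ with $p > d/\alpha$, and $\|\rho_0\|_{L^\infty} \leq C_\infty$, and choose $N$ so large that $\lambda_N^{\alpha/2}$ dominates all the relevant combinations of $C_0$, $C_\infty$, $\overline{\rho}$, and $1/\tau_1$ appearing below (the explicit threshold is analogous to (\ref{eq:3.25}) but inflated by the factors $C_0$ coming from $\|\Delta K\|_{L^1}$). Let $K \subset S$ be the unit-norm ball in $\dot{H}^{\alpha/2}$ of radius $\lambda_N^{\alpha/4}$, invoke the RAGE-type Lemma \ref{lemma:3.4} for $\sigma = 1/100$ to get $T_c$, and for any $A \geq A_0$ set $\tau = T_c/A \leq \tau_1$. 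Lemma \ref{lem:4.1} provides the local bounds $\|\rho^A - \overline{\rho}\|_{L^2} \leq 2(B_0^2-\overline{\rho}^2)^{1/2}$ and $\|\rho^A\|_{L^\infty} \leq 2 C_\infty$ on $[0,\tau_1]$; together with Proposition \ref{prop:4.1} these yield a uniform $H^3$ bound on $[0,\tau_1]$ and in particular, via Gagliardo--Nirenberg, a uniform bound on $\|\nabla \rho^A\|_{L^4}$, which is precisely the quantity needed to close Lemma \ref{lem:4.2}.

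Next, by rescaling time in the trajectory flow as in (\ref{eq:3.28}) the RAGE estimate gives $\tau^{-1}\int_0^\tau \|P_N(\omega^A-\overline{\rho})\|_{L^2}^2 \, dt \leq \sigma (B_0^2-\overline{\rho}^2)$, and integrating the inequality of Lemma \ref{lem:4.2} from $0$ to $\tau$ (using $F$ locally bounded and choosing $A$ even larger so that the $F(At)^2$ time-average is small, exactly as in (\ref{eq:3.30})--(\ref{eq:3.31})) gives $\|\rho^A-\omega^A\|_{L^2}^2 \leq (B_0^2-\overline{\rho}^2)/100$ on $[0,\tau]$. Combining these two, the analog of (\ref{eq:3.33}) yields $\tau^{-1}\int_0^\tau \|P_N(\rho^A-\overline{\rho})\|_{L^2}^2 \, dt \leq (B_0^2-\overline{\rho}^2)/25$, and then the spectral splitting against $(I-P_N)$ forces $\tau^{-1} \int_0^\tau \|\rho^A\|_{\dot{H}^{\alpha/2}}^2 \, dt \gtrsim \lambda_N^{\alpha/2}(B_0^2-\overline{\rho}^2)$. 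Feeding this back into (\ref{eq:4.9}), integrated from $0$ to $\tau$, produces the one-step contraction $\|\rho^A(\tau)-\overline{\rho}\|_{L^2}^2 \leq (1 - \tfrac{1}{2}\lambda_N^{\alpha/2}\tau)(B_0^2-\overline{\rho}^2)$ provided $N$ was taken large enough to absorb the growth terms.

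The main obstacle, and the only genuine departure from Proposition \ref{prop:3.3}, is the inhomogeneous term $2 C_0 \overline{\rho}(\|\rho\|_{L^\infty}+\overline{\rho})$ on the right of (\ref{eq:4.9}), which unlike the $\beta=d$ case is not proportional to $\|\rho-\overline{\rho}\|_{L^2}^2$ and therefore cannot be swallowed by a Gronwall-type factor alone. I would handle it by choosing $\lambda_N^{\alpha/2}\tau$ so large that the linear-in-norm contraction factor already dominates this constant additive input, i.e.\ by strengthening the lower bound on $\lambda_N^{\alpha/2}$ to exceed a multiple of $C_0 \overline{\rho}(C_\infty+\overline{\rho})/(B_0^2-\overline{\rho}^2)$, so that the net effect over one period $\tau$ is still strictly contractive. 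Iterating the contraction $k = \lfloor A\tau_1/(2T_c)\rfloor$ times brings $\|\rho^A(k\tau)-\overline{\rho}\|_{L^2}$ below the threshold $B_1$; interpolating between the $L^2$ bound and the $L^\infty$ bound $2 C_\infty$ gives $\|\rho^A(k\tau)-\overline{\rho}\|_{L^p} \leq D$, so the hypotheses restart at time $k\tau$. Finally, the bound $\|\rho^A\|_{L^p} \leq C_{L^p}$ valid on $[0,k\tau]$ turns (\ref{eq:4.6}) into $\frac{d}{dt}\widetilde{\rho} \leq C_0 \widetilde{\rho}^2 - C_3 \widetilde{\rho}^{1+p\alpha/d}$ with exponent $1+p\alpha/d > 2$, whose largest equilibrium $M_0$ supplies $C_{L^\infty} = \max\{M_0,\|\rho_0\|_{L^\infty}\}$; propagating this and the $L^p$ bound to successive intervals yields the claimed global $L^\infty$ estimate.
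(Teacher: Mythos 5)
Your proposal is correct and follows essentially the same route as the paper, whose own proof of this proposition is simply a deferral to the argument of Proposition \ref{prop:3.3} with Lemmas \ref{lem:4.1}, \ref{lem:4.2} and inequality (\ref{eq:4.6}) substituted for their Section 3 counterparts. In fact you go slightly beyond the paper by explicitly identifying and absorbing the additive term $2C_0\overline{\rho}(\|\rho\|_{L^\infty}+\overline{\rho})$ in (\ref{eq:4.9}) into an enlarged choice of $\lambda_N^{\frac{\alpha}{2}}$, which is the one genuine point the paper's one-line proof leaves implicit.
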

\begin{proof}
According to the proof of Proposition \ref{prop:3.3}.
\end{proof}

\begin{remark}
According to Proposition \ref{prop:4.1} and \ref{prop1}, we can finish the proof of Theorem \ref{thm:1.1} when $\beta\in [2,d), d>2$.
\end{remark}

\vskip .3in
\section{Appendix: nonlinear maximum principle}
In this section, we recall nonlinear maximum principle on $\mathbb{T}^d$, the main idea of proof come from \cite{Burczak.2017,Constantin.2012, Rafael.2016}.

\begin{lemma}\label{lem:5.1}
Let $f\in \mathcal{S}(\mathbb{T}^d)$ and denote by $\overline{x}$ the point such that
$$
f(\overline{x})=\max_{x\in \mathbb{T}^d}f(x),
$$
and $f(\overline{x})>0$, then we have the following
$$
(-\Delta)^{\frac{\alpha}{2}}f(\overline{x})\geq C(\alpha,d,p)\frac{f(\overline{x})^{1+\frac{p\alpha}{d}}}{\|f\|_{L^p}^{\frac{p\alpha}{d}}},
$$
or
$$
f(\overline{x})\leq C(d,p)\|f\|_{L^p}.
$$
\end{lemma}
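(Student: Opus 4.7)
The plan is to adapt the Constantin--Vicol nonlinear maximum principle (originally on $\mathbb{R}^d$) to the torus by using only the $k=0$ contribution of the kernel representation \eqref{eq:2.1}. Since $f$ attains its maximum at $\overline{x}$, the integrand $f(\overline{x})-f(y)$ is non-negative for all $y\in\mathbb{T}^d$, so every term in the sum over $k\in\mathbb{Z}^d$ is non-negative. Discarding all terms with $k\neq 0$ and removing the principal value, we obtain the pointwise lower bound
\begin{equation*}
(-\Delta)^{\frac{\alpha}{2}}f(\overline{x}) \;\geq\; C_{\alpha,d}\int_{\mathbb{T}^d}\frac{f(\overline{x})-f(y)}{|\overline{x}-y|^{d+\alpha}}\,dy,
\end{equation*}
where we identify $\mathbb{T}^d=[-\frac12,\frac12)^d$ recentered at $\overline{x}$ so that $|\overline{x}-y|$ denotes the Euclidean distance in the fundamental domain.

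Next I would localize and threshold. Fix $R\in(0,R_0]$ where $R_0>0$ is a small dimensional constant chosen so that a Euclidean ball $B_R(\overline{x})$ of radius $R$ fits inside $\mathbb{T}^d$ and satisfies $|B_R(\overline{x})|\geq c_d R^d$. Restricting the integration to the ``low set'' $E_R=B_R(\overline{x})\cap\{y:f(y)\leq \tfrac12 f(\overline{x})\}$, where the numerator is at least $\tfrac12 f(\overline{x})$ and the denominator is at most $R^{d+\alpha}$, gives
\begin{equation*}
(-\Delta)^{\frac{\alpha}{2}}f(\overline{x}) \;\geq\; \frac{C_{\alpha,d}\,f(\overline{x})}{2\,R^{d+\alpha}}\,|E_R|.
\end{equation*}
To estimate $|E_R|$ from below, I apply Chebyshev's inequality to the ``high set'': $|\{y:f(y)\geq \tfrac12 f(\overline{x})\}|\leq 2^p\|f\|_{L^p}^p/f(\overline{x})^p$, so
\begin{equation*}
|E_R|\;\geq\; c_d R^d - \frac{2^p\|f\|_{L^p}^p}{f(\overline{x})^p}.
\end{equation*}

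The final step is to optimize in $R$. Choose $R$ so that the Chebyshev correction consumes at most half of $c_d R^d$, i.e. set
\begin{equation*}
R^d \;=\; \frac{2^{p+1}}{c_d}\,\frac{\|f\|_{L^p}^p}{f(\overline{x})^p},
\qquad\text{so that}\qquad |E_R|\;\geq\; \tfrac12 c_d R^d.
\end{equation*}
Two cases arise. If this prescribed $R$ exceeds $R_0$, then rearranging yields $f(\overline{x})\leq C(d,p)\|f\|_{L^p}$, which is the second alternative. Otherwise $R\leq R_0$ is admissible, and plugging the lower bound $|E_R|\geq \tfrac12 c_d R^d$ into the displayed inequality collapses the $R^d$ factors and leaves
\begin{equation*}
(-\Delta)^{\frac{\alpha}{2}}f(\overline{x}) \;\geq\; \frac{c\,f(\overline{x})}{R^{\alpha}}\;=\;C(\alpha,d,p)\,\frac{f(\overline{x})^{1+\frac{p\alpha}{d}}}{\|f\|_{L^p}^{\frac{p\alpha}{d}}},
\end{equation*}
which is the first alternative.

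The routine obstacles are bookkeeping: fixing $R_0$ so that Euclidean balls embed faithfully into the torus and the lower volume bound $|B_R|\geq c_d R^d$ holds, and tracking the constants through the threshold $\tfrac12 f(\overline{x})$ (any fixed fraction in $(0,1)$ would work and only affects $C(\alpha,d,p)$). The one conceptual point to verify is that restricting the periodic kernel to the $k=0$ block does not discard useful mass---this is legitimate precisely because every other block contributes a non-negative quantity once $\overline{x}$ is a global maximum, which is the feature that makes the argument go through in exactly the same form as on $\mathbb{R}^d$.
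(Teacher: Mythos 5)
Your proposal is correct and follows essentially the same route as the paper's proof: drop the $k\neq 0$ blocks of the periodic kernel (legitimate because the integrand is non-negative at a global maximum), restrict to the sub-level set inside a ball of radius $R$, control the complementary ``high set'' by Chebyshev in $L^p$, and choose $R$ so the Chebyshev correction eats only half the ball's volume, with the dichotomy arising from whether that $R$ fits inside the fundamental domain. The only differences are cosmetic: your $E_R$ is the paper's $N_1(R)$ after the change of variables $y=\overline{x}-\lambda$, and you make explicit the non-negativity argument that the paper uses implicitly when passing to the $k=0$ term.
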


\begin{proof}
We take $R>0$ a positive number and defined
$$
N_1(R)=\big\{\lambda\in B(0,R)\big| f(\overline{x})-f(\overline{x}-\lambda)> \frac{f(\overline{x})}{2}\big\}.
$$
We define
$$
M=\min_{y\in \partial \mathbb{T}^d }|\overline{x}-y|,
$$
without loss of generality, we assume that $M\geq\frac{1}{4}$. If
\begin{equation}\label{eq:5.1}
R\leq M,
\end{equation}
then, we imply
$$
B(0,R)\subset \mathbb{T}^d.
$$
If we denote
$$
N_2(R)=B(0,R)-N_1(R),
$$
then
$$
N_2(R)=\big\{\lambda_N\in B(0,R)\big| f(\overline{x})-f(\overline{x}-\lambda)\leq \frac{f(\overline{x})}{2}\big\},
$$
and
$$
\|f\|_{L^p}^p\geq\int_{\mathbb{T}^d}|f(\overline{x}-\lambda)|^pd\lambda
\geq\int_{N_2(R)}|f(\overline{x}-\lambda)|^pd\lambda\geq \left(\frac{|f(\overline{x})|}{2}\right)^p|N_2(R)|,
$$
thus, we imply that
\begin{equation}\label{eq:5.2}
|N_2(R)|\leq \left(\frac{2\|f\|_{L^p}}{f(\overline{x})}\right)^p.
\end{equation}
According to the definition of (\ref{eq:2.1}), we have
$$
\begin{aligned}
(-\Delta)^{\frac{\alpha}{2}}f(\overline{x})&\geq C_{\alpha,d}P.V.\int_{\mathbb{T}^d}\frac{f(\overline{x})-f(\overline{x}-\lambda))}{|\lambda|^{d+\alpha}}d\lambda\\
&\geq C_{\alpha,d}P.V.\int_{N_1(R)}\frac{f(\overline{x})-f(\overline{x}-\lambda))}{|\lambda|^{d+\alpha}}d\lambda\\
&\geq C_{\alpha,d}\frac{f(\overline{x})}{2}\frac{1}{R^{d+\alpha}}|N_1(R)|.
\end{aligned}
$$
By $(\ref{eq:5.2})$, the definition of $N_1(R)$ and $N_2(R)$, we imply that
$$
|N_1(R)|=|B(0,R)|-|N_2(R)|\geq \omega_d R^d-\left(\frac{2\|f\|_{L^p}}{f(\overline{x})}\right)^p,
$$
where $\omega_d$ is the volume pre sphere. Thus, we obtain
\begin{equation}\label{eq:5.3}
(-\Delta)^{\frac{\alpha}{2}}f(\overline{x})
\geq C_{\alpha,d}\frac{f(\overline{x})}{2R^{d+\alpha}}\left(\omega_d R^d-\left(\frac{2\|f\|_{L^p}}{f(\overline{x})}\right)^p\right).
\end{equation}
We take $R$ such that
$$
\omega_d R^d=2\left(\frac{2\|f\|_{L^p}}{f(\overline{x})}\right)^p,
$$
thus
\begin{equation}\label{eq:5.4}
R=\left(\frac{2}{\omega_d}\left(\frac{2\|f\|_{L^p}}{f(\overline{x})} \right)^p\right)^{\frac{1}{d}}
=\left(\frac{2}{\omega_d}\right)^{\frac{1}{d}}\left(\frac{2\|f\|_{L^p}}{f(\overline{x})} \right)^{\frac{p}{d}}.
\end{equation}
By (\ref{eq:5.3}) and (\ref{eq:5.4}), we have
$$
\begin{aligned}
(-\Delta)^{\frac{\alpha}{2}}f(\overline{x})
&\geq C_{\alpha,d}\frac{f(\overline{x})}{2R^{d+\alpha}}\left(\omega_d R^d-\left(\frac{2\|f\|_{L^p}}{f(\overline{x})}\right)^p\right)
= C_{\alpha,d}\frac{f(\overline{x})}{2R^{d+\alpha}} \left(\frac{2\|f\|_{L^p}}{f(\overline{x})}\right)^p\\
&=\frac{C_{\alpha,d} 2^p}{2\left(\frac{2}{\omega_d}\right)^{\frac{d+\alpha}{d}}2^{\frac{p(d+\alpha)}{d}}}
\frac{\|f\|_{L^p}^p f(\overline{x})^{\frac{p(d+\alpha)}{d}} f(\overline{x})}{(\|f\|_{L^p})^{\frac{p(d+\alpha)}{d}}f(\overline{x})^p}\\
&=C(\alpha,d,p)\frac{f(\overline{x})^{1+\frac{p\alpha}{d}}}{\|f\|_{L^p}^{\frac{p\alpha}{d}}}.
\end{aligned}
$$
If $R$ does not fulfils (\ref{eq:5.1}), then
$$
\left(\frac{2}{\omega_d}\right)^{\frac{1}{d}}\left(\frac{2\|f\|_{L^p}}{f(\overline{x})} \right)^{\frac{p}{d}}>M,
$$
thus, we conclude that
$$
f(\overline{x})\leq\frac{2}{M^{\frac{d}{p}}(\frac{\omega_d}{2})^{\frac{1}{p}}}\|f\|_{L^p}\leq C(d,p)\|f\|_{L^p}.
$$
This completes the proof of Lemma \ref{lem:5.1}.
\end{proof}

\begin{remark}
For the case of $\mathbb{R}^d$, we can refer to \cite{Rafael.2016}.
\end{remark}

\vskip .1in
\noindent \textbf{Acknowledgement}

\noindent The research are supported by the National Natural Science Foundation of China 11771284 and 11831011.
\vskip .3in
\bibliography{bib}

\end{document}